\newtheorem{thm}{Theorem}
\newtheorem{crl}{Corollary}
\newtheorem{cnj}{Conjecture}
\newtheorem{prp}{Proposition}
\newtheorem{lmm}{Lemma}
\newtheorem{rmk}{Remark}
\newtheorem{dfn}{Definition}
\newcommand{\R}{\Rightarrow}
\newcommand{\LR}{\Leftrightarrow}
\newcommand{\st}{\stackrel}
\newcommand{\ra}{\rightarrow}
\newcommand{\mt}{\mapsto}
\newcommand{\HFR}{\mathrm{Hom}(F,\mathbb R)}
\newcommand{\HFCp}{\mathrm{Hom}(F,\mathbb C_p)}
\newcommand{\prn}{\mathbb R_{+}}
\newcommand{\nni}{\mathbb Z_{\geq 0}}
\newcommand{\ttt}{\,{}^t}
\newcommand{\I}{\rm (I)}
\newcommand{\II}{\rm (I\hspace{-.1em}I)}
\newcommand{\III}{\rm (I\hspace{-.1em}I\hspace{-.1em}I)}
\newcommand{\mfpi}{\mathfrak p_\iota}
\newcommand{\fpi}{\mathfrak f\mathfrak p_\iota}
\newcommand{\subjclass}[2][2010]{%
  \let\@oldtitle\@title%
  \gdef\@title{\@oldtitle\footnotetext{#1 \emph{Mathematics subject classification(s).} #2}}%
}
\newcommand{\keywords}[1]{%
  \let\@@oldtitle\@title%
  \gdef\@title{\@@oldtitle\footnotetext{\emph{Key words and phrases.} #1.}}%
}
\title{On the ratios of Barnes' multiple gamma functions to the $p$-adic analogues}
\author{Tomokazu Kashio\thanks{Tokyo University of Science, \texttt{kashio\_tomokazu@ma.noda.tus.ac.jp}}}
\subjclass{11R27, 11R42, 11R80, 11S40, 11S80, 33B15.}
\keywords{Stark's conjectures, multiple gamma functions}
\begin{document}

%% \begin{CJK*}{UTF8}{min} %% ipad

\maketitle

\begin{abstract}
Let $F$ be a totally real field.
For each ideal class $c$ of $F$ and each real embedding $\iota$ of $F$,
Hiroyuki Yoshida defined an invariant $X(c,\iota)$ as a finite sum of log of Barnes' multiple gamma functions with some correction terms.
Then the derivative value of the partial zeta function $\zeta(s,c)$ has a canonical decomposition 
$\zeta'(0,c)=\sum_{\iota}X(c,\iota)$, where $\iota$ runs over all real embeddings of $F$.
Yoshida studied the relation between $\exp(X(c,\iota))$'s, Stark units, and Shimura's period symbol.
Yoshida and the author also defined and studied the $p$-adic analogue $X_p(c,\iota)$:
In particular, we discussed the relation between the ratios $[\exp(X(c,\iota)):\exp_p(X_p(c,\iota))]$ and Gross-Stark units.
In a previous paper, the author proved the algebraicity of some products of $\exp(X(c,\iota))$'s.
In this paper, we prove its $p$-adic analogue.
Then, by using these algebraicity properties, we discuss the relation between the ratios $[\exp(X(c,\iota)):\exp_p(X_p(c,\iota))]$ and Stark units.
\end{abstract}

% \tableofcontents

\section{Introduction}

Let $F$ be a totally real field, $\mathcal O_F$ the ring of integers of $F$, $\mathfrak f$ an integral ideal of $F$, 
$I_{\mathfrak f}$ the group of all fractional ideals of $F$ relatively prime to $\mathfrak f$.
We consider the narrow ideal class group modulo $\mathfrak f$ defined as 
\begin{align*}
C_{\mathfrak f}:=I_{\mathfrak f}/\{(\alpha) \in I_{\mathfrak f} \mid \alpha \in F^\times,\ \alpha \equiv 1\ {\bmod}^*\ \mathfrak f,\ \alpha>\!\!>0\}.
\end{align*}
Here $\alpha>\!\!> 0$ means that $\alpha$ is totally positive.
We denote the class in $C_\mathfrak f$ of a fractional ideal $\mathfrak a$ by $[\mathfrak a]$.
Shintani \cite{Shin2} gave an explicit formula for the derivative value $\zeta'(0,c)$ of 
the partial zeta function $\zeta(s,c):=\sum_{\mathcal O_F \supset \mathfrak a \in c}N\mathfrak a^{-s}$ with $c \in C_\mathfrak f$.
The main term is a finite sum of $\log$ of Barnes' multiple gamma functions $\Gamma(z,\bm v)$ (Definition \ref{mgf}) of the following form:
\begin{align} \label{Sf}
\zeta'(0,c)=\sum_{\iota \in \HFR} \left(\sum_{j \in J} \sum_{\bm x \in R(c,\bm v_j)} 
\log(\Gamma(\iota(\bm x\ttt\bm v_j),\iota(\bm v_j)))\right) + \text{ correction terms}. 
\end{align}
Here $\bm v_j$ are suitable vectors whose entries are totally positive integers of $F$, 
$R(c,\bm v_j)$ are finite sets of vectors whose entries are positive rational numbers (for details, see Theorem \ref{Sd}, Definition \ref{R}).
We denote by $\bm x\ttt\bm v_j$ the inner product, by $\HFR$ the set of all real embeddings of $F$.
Yoshida \cite[Chap.\ III, (3.6)--(3.9)]{Yo} discovered an appropriate decomposition $\sum_{\iota \in \HFR} W(c,\iota)+V(c,\iota)$ of the ``correction terms'' 
and defined the class invariant 
\begin{align*}
X(c,\iota):=\sum_{j \in J} \sum_{\bm x \in R(c,\bm v_j)} \log(\Gamma(\iota(\bm x\ttt\bm v_j),\iota(\bm v_j))) +W(c,\iota)+V(c,\iota).
\end{align*}
It follows that we have
\begin{align*}
\zeta'(0,c)=\sum_{\iota \in \HFR} X(c,\iota).
\end{align*}
Since Shintani's expression (\ref{Sf}) is not unique,
Yoshida's invariant has an ambiguity: $\exp(X(c,\iota))$ is well-defined up to multiplication by a rational power of a unit of $\iota(F)$.
Yoshida conjectured that each value of Shimura's period symbol $p_K$ (\cite[Theorem 32.5]{Shim}) can be expressed 
as a product of rational powers of $\exp(X(c,\iota))$'s \cite[Chap.\ III, Conjecture 3.9]{Yo}.
Yoshida also studied the relation between Stark units and $\exp(X(c,\iota))$'s.
On the other hand, Yoshida and the author defined and studied the $p$-adic analogue $X_p(c,\iota)$ in \cite{KY1,KY2}.
In particular we formulated a refinement of a $p$-adic analogue of the Stark conjecture by Gross.
Let us explain more precisely:
Let $K/F$ be an abelian extension of number fields, $S$ a finite set of places of $F$. We assume that
\begin{itemize}
\item $S$ contains all infinite places of $F$ and all ramified places in $K/F$.
\item $S$ contains a distinguished place $v$ which splits completely in $K/F$.
\item $\#S>2$. (This assumption is for simplicity. $\#S>1$ is essential.)
\end{itemize}
We fix a place $w$ of $K$ dividing $v$. 
Then the rank $1$ abelian Stark conjecture, which is a version of Stark's conjectures in \cite{St}, 
implies the following statement.

\begin{cnj}
There exists a $v$-unit $u\in K$, which is called a Stark unit, satisfying 
\begin{align*}
\log \parallel\!\! u^{\tau}\!\!\parallel_{w} =-W_K\zeta'_{S}(0,\tau) \quad (\tau\in \mathrm{Gal}(K/F)).
\end{align*}
\end{cnj}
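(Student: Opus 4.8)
The plan is to deduce this from the rank $1$ abelian Stark conjecture discussed just above (a version of Stark's conjectures in \cite{St}), whose standing hypotheses on $S$ are exactly the three bulleted conditions (the condition $\#S>2$ there being only for convenience, $\#S>1$ being what is actually needed). The first step is to record what that conjecture provides. Since $v\in S$ splits completely in the abelian extension $K/F$ and $\#S\geq 2$, one has $\mathrm{ord}_{s=0}L_S(s,\chi)\geq 1$ for every character $\chi$ of $G:=\mathrm{Gal}(K/F)$: for $\chi\neq 1$ the Euler factor at $v$ already forces a zero, and for $\chi=1$ one uses $\#S-1\geq 1$. The conjecture then asserts the existence of a $v$-unit $\varepsilon\in K^\times$ (a unit at all finite places of $K$ not lying above $v$) with
\[
L_S'(0,\chi)=-\frac{1}{W_K}\sum_{\sigma\in G}\chi(\sigma)\log\parallel\!\varepsilon^{\sigma}\!\parallel_w\qquad(\chi\in\widehat G).
\]
That $\varepsilon$ may be chosen to be a genuine $v$-unit, rather than merely an $S$-unit, is where $\#S\geq 2$ is essential: since $v$ splits completely the places of $K$ above $v$ form a single free $G$-orbit, so the first-order vanishing of $L_S(s,\chi)$ is accounted for by $v$ alone, whereas if $\#S=1$ the ambient $S$-unit module is finite and no such $\varepsilon$ exists in general.

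The second step is Fourier inversion over $G$ to pass from the character form to the per-class form. Normalising the Artin map as in the paper, so that $\chi$ is regarded as a character of $G\cong C_{\mathfrak f}$ and $L_S(s,\chi)=\sum_{\tau\in G}\chi(\tau)\,\zeta_S(s,\tau)$, one gets $\zeta_S(s,\tau)=\frac{1}{\#G}\sum_{\chi}\overline{\chi}(\tau)L_S(s,\chi)$. Differentiating at $s=0$, inserting the conjectural formula for $L_S'(0,\chi)$, and using $\frac{1}{\#G}\sum_{\chi}\overline{\chi}(\tau)\chi(\sigma)=\delta_{\sigma,\tau}$, the double sum over $\chi$ and $\sigma$ collapses and gives
\[
\zeta'_S(0,\tau)=-\frac{1}{W_K}\log\parallel\!\varepsilon^{\tau}\!\parallel_w\qquad(\tau\in G).
\]
Setting $u:=\varepsilon$ proves the statement; note that, $v$ being split, $K_w\cong F_v$, so $\parallel\cdot\parallel_w$ is just the normalised absolute value at $w$ in the sense of the paper. (If the form of the conjecture quoted from \cite{St} carries $\overline{\chi}$ in place of $\chi$, or an inverse on the Galois argument, one first rewrites it into the shape above; this only affects whether $\tau$ or $\tau^{-1}$ appears and is reconciled by the Artin-map normalisation.)

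The only non-formal ingredient is the existence assertion itself, together with the $v$-unit refinement; in particular the abelianity of $K(\varepsilon^{1/W_K})/F$, although part of the rank $1$ abelian conjecture, plays no role here. Everything else --- the order-of-vanishing count, the Fourier inversion, and the matching of normalisations --- is routine bookkeeping. Thus the proof consists in invoking the rank $1$ abelian Stark conjecture of \cite{St} under the three hypotheses on $S$ and rearranging the character form of its regulator identity into the per-class form stated here; the main obstacle, of course, is that this existence statement is itself only a conjecture, which is precisely why the assertion is recorded here as a conjecture rather than proved.
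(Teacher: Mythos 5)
Your proposal is correct and matches the paper's intent: the statement is recorded as a conjecture precisely because it is the per-class reformulation of the rank $1$ abelian Stark conjecture of \cite{St}, obtained (as you do) by character orthogonality/Fourier inversion over $\mathrm{Gal}(K/F)$, with no independent proof available. One small imprecision: the genuine $v$-unit normalisation of the Stark unit is what the hypothesis $\#S>2$ buys (the paper's ``for simplicity''), while $\#S>1$ is what is essential for the order-of-vanishing statement, rather than $\#S\geq 2$ being the source of the $v$-unit refinement as you suggest; this does not affect the derivation.
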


Here $W_K$ is the number of roots of unity in $K$. We define the partial zeta function associated with $S,\tau$ by
\begin{align*}
\zeta_{S}(s,\tau):=\sum_{(\frac{K/F}{\mathfrak{a}})=\tau,\ (\mathfrak{a},S)=1}N\mathfrak{a}^{-s},
\end{align*}
where $\mathfrak{a}$ runs over all integral ideals of $F$, relatively prime to each finite place in $S$, 
whose images under the Artin symbol $(\frac{K/F}{*})$ equal $\tau$.
If $w$ is a finite place, we put $\parallel\!\! x\!\!\parallel_{w}:=Nw^{-\mathrm{ord}_{w}(x)}$.
Otherwise we take an embedding $\iota_w\colon K\hookrightarrow \mathbb{C}$ associated with $w$ and
put $\parallel\!\! x\!\!\parallel _{w}:=|\iota_w(x)|$ or $|\iota_w(x)|^2$ 
for a real or complex place $w$ respectively.
Note that when $v$ is a real place, assuming the Stark conjecture, we can write a Stark unit explicitly as 
\begin{align} \label{Su}
\exp(2\zeta'_{S}(0,\tau)) \in \iota_w(\mathcal O_K^\times).
\end{align}
Here we see that $W_K=2$ since $K$ has a real place $w$. 

Next we consider the case when $v$ is a finite place lying above a rational prime $p$.
We additionally assume that 
\begin{itemize}
\item $F$ is a totally real field, $K$ is a CM-field.
\item $S$ contains all places of $F$ lying above $p$.
\end{itemize}
Throughout this paper, we regard each number field as a subfield of $\overline{\mathbb{Q}}$
and fix two embeddings $\overline{\mathbb{Q}}\hookrightarrow \mathbb{C}$, $\overline{\mathbb{Q}}\hookrightarrow \mathbb{C}_{p}$.
Here we denote by $\mathbb{C}_{p}$ the $p$-adic completion of the algebraic closure $\overline{\mathbb{Q}_{p}}$ of $\mathbb{Q}_{p}$.
For a number field $L$, we denote by $\mathfrak{p}_{L}:=\mathfrak{p}_{L,\mathrm{id}}$ the prime ideal corresponding to the $p$-adic topology on $L$ induced by 
$\mathrm{id} \colon L \hookrightarrow \overline{\mathbb Q} \hookrightarrow \mathbb C_p$.
We assume that 
\begin{itemize}
\item $v=\mathfrak{p}_{F}$, $w=\mathfrak{p}_{K}$.
\end{itemize}
Put $R:=S-\{\mathfrak{p}_{F}\}$. 
We see that $\zeta_{S}(s,\tau)=(1-N\mathfrak{p}_{F}{}^{-s})\zeta_{R}(s,\tau)$ since $\mathfrak{p}_{F}$ splits completely in $K$.
It follows that 
\begin{align*}
\zeta_{S}'(0,\tau)=\zeta_{R}(0,\tau) \log N\mathfrak{p}_{F}.
\end{align*}
Hence, by $\zeta_{R}(0,\tau) \in \mathbb Q$, there exist a positive integer $W$ and a $\mathfrak{p}_{F}$-unit $u'\in K$ which satisfy
\begin{align} \label{GSu}
\log \parallel\!\! u'^{\tau}\!\!\parallel_{w} =-W\zeta'_{S}(0,\tau) \quad (\tau\in \mathrm{Gal}(K/F)).
\end{align}
The Stark conjecture with $v=\mathfrak p_F$ states that we can take $W=W_K$. 
Gross conjectured the following property of $u'$ as a $p$-adic number.

\begin{cnj}[{\cite[Conjecture 3.13]{Gr2}}] \label{GSc}
Let $u'\in K$ be a $\mathfrak{p}_{F}$-unit satisfying (\ref{GSu}). Then we have 
\begin{align*}
\log_{p} N_{K_{\mathfrak p_K}/\mathbb Q_p}({u'}^\tau)
=-W\zeta'_{p,S}(0,\tau) \quad (\tau\in \mathrm{Gal}(K/F)).
\end{align*}
\end{cnj}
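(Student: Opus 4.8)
The statement is Gross's $p$-adic conjecture on the Gross--Stark unit, and a direct proof by the special-function methods of the present paper is out of reach; what follows is the line of attack I would take, indicating where the genuinely hard input enters. The first step is to pass from the $\mathfrak p_F$-unit $u'$ to $p$-adic $L$-functions. Since $\mathfrak p_F$ splits completely in $K$ we have $\zeta_S(s,\tau)=(1-N\mathfrak p_F^{-s})\zeta_R(s,\tau)$, so $\zeta'_S(0,\tau)$ and the derivative $\zeta'_{p,S}(0,\tau)$ of the Deligne--Ribet--Cassou-Nogu\`es $p$-adic $L$-function are the first-order terms attached to one and the same rational number $\zeta_R(0,\tau)$, the trivial Euler factor contributing $\log N\mathfrak p_F$ archimedeanly and $\log_p N\mathfrak p_F$ $p$-adically. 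Thus Conjecture~\ref{GSc} asserts that, uniformly in $\tau$, the quantities $-\log\|u'^\tau\|_w$ and $-\log_p N_{K_{\mathfrak p_K}/\mathbb Q_p}(u'^\tau)$ arise from the single object $u'$ through the two absolute values at $w$.

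Next I would feed in the Shintani--Yoshida machinery. Using Shintani's formula (\ref{Sf}) and the decomposition $\zeta'_S(0,\tau)=\sum_\iota X(c,\iota)$, together with the $p$-adic counterpart $\zeta'_{p,S}(0,\tau)=\sum_\iota X_p(c,\iota)$ from \cite{KY1,KY2}, both sides of Conjecture~\ref{GSc} become, up to replacing $\log$ by $\log_p$, finite sums of logarithms of Barnes multiple gamma values $\Gamma(\iota(\bm x\ttt\bm v_j),\iota(\bm v_j))$ and of their $p$-adic analogues. One then needs: (a) on the archimedean side, the Stark conjecture in the shape (\ref{Su}), expressing $\|u^\tau\|_w$ as $\exp$ of a fixed $\mathbb Q$-linear combination of the $X(c,\iota)$; (b) on the $p$-adic side, a companion formula expressing $N_{K_{\mathfrak p_K}/\mathbb Q_p}(u'^\tau)$ as $\exp_p$ of the \emph{same} combination of the $X_p(c,\iota)$ --- this is essentially Dasgupta's conjectural $p$-adic analytic formula for the Gross--Stark unit, whose equivalence with Conjecture~\ref{GSc} is by now classical; and (c) a ``comparison of places'' identity making the archimedean product of $\Gamma$-values and the $p$-adic product of $\Gamma_p$-values compatible.

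The algebraicity results are what make (a)--(c) meaningful. Because Shintani's expression (\ref{Sf}) is not unique, $\exp(X(c,\iota))$ and $\exp_p(X_p(c,\iota))$ are only well defined up to rational powers of units of $\iota(F)$; the main theorem of the author's previous paper on the algebraicity of suitable products of the $\exp(X(c,\iota))$'s, and its $p$-adic analogue proved in the present paper, ensure that the particular products occurring in (a) and (b) are genuine algebraic numbers whose ambiguities cancel, so that the ratio $[\exp(X(c,\iota)):\exp_p(X_p(c,\iota))]$ is a well-defined algebraic quantity; matching it with the Stark unit is then bookkeeping with the Galois action and the Artin symbol.

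I expect (c) to be the main obstacle: showing that the archimedean product of Barnes multiple gamma values and the $p$-adic product of the $\Gamma_p$-analogues differ by \emph{exactly} the algebraic number predicted by $u'$ is a higher-rank, several-places analogue of the Gross--Koblitz formula, and it cannot be extracted by manipulating the gamma functions alone. In the literature the corresponding comparison is obtained either through Hida families of Hilbert Eisenstein series and a Galois-cohomological computation of an $\mathcal L$-invariant (Darmon--Dasgupta--Pollack, and then Dasgupta--Kakde--Ventullo), or through Ribet-style congruences together with Iwasawa-theoretic input (as in Dasgupta--Kakde's work on the Brumer--Stark conjecture). Granting such a comparison, Conjecture~\ref{GSc} follows; in fact it is now an unconditional theorem, and the role of the present paper is rather to use these algebraicity statements to construct the analogous explicit bridge to the \emph{archimedean} Stark units.
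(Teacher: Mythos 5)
There is nothing in the paper to compare your argument against: the statement you were asked about is not proved in this paper at all. It is Gross's conjecture (cited as \cite[Conjecture 3.13]{Gr2}), stated as Conjecture~\ref{GSc} and left as a conjecture; the author only remarks that Dasgupta--Darmon--Pollack \cite{DDP} proved it under certain conditions, and then uses it as motivation for the refinement (Conjecture~\ref{cnjky1}) and for the results on the ratios $[\exp(X(c,\iota)):\exp_p(X_p(c,\iota))]$. So the honest assessment is that your text is not a proof, and you say as much yourself: the decisive step, your item (c), is exactly the content of the conjecture, and you discharge it by appealing to Hida-family/$\mathcal L$-invariant arguments (Darmon--Dasgupta--Pollack, Dasgupta--Kakde--Ventullo) that lie entirely outside the methods of this paper. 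Deferring the whole difficulty to an external theorem that is itself the statement to be proved is a genuine gap, not a proof strategy.

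Beyond that structural point, several intermediate claims in your sketch are off relative to what the paper actually provides. The decomposition $\zeta_p'(0,c)=\sum_\iota X_p(c,\iota)$ (Theorem~\ref{Kf}) requires every prime above $p$ to divide the conductor $\mathfrak f$, whereas in the Gross--Stark setting of Conjecture~\ref{GSc} the distinguished prime $\mathfrak p_F$ splits completely in $K$ and so does \emph{not} divide the conductor; this is precisely why the paper works with the auxiliary modulus $\mathfrak f\mathfrak p_\iota$ and the quantities $Y_p(\chi_{\mathfrak p_\iota},\iota)$ rather than applying Theorem~\ref{Kf} directly. Your step (a) invokes the Stark conjecture in the form (\ref{Su}), but that formula is the statement for a \emph{real} place $v$, while here $v=\mathfrak p_F$ is finite, so it is not available in the form you use it. Finally, Dasgupta's formula is described in the paper as a further refinement of Conjecture~\ref{cnjky1} (hence of Conjecture~\ref{GSc}), not as a statement classically known to be equivalent to it, so your step (b) also assumes more than is on record here. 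In short: the paper gives no proof, your proposal does not supply one, and the auxiliary reductions you propose do not match the hypotheses under which the paper's machinery applies.
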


Here $\log_{p}$ denotes Iwasawa's $p$-adic $\log$ function and $\zeta_{p,S}(s,\tau)$ denotes the $p$-adic interpolation function for $\zeta_{S}(s,\tau)$. 
Dasgupta-Darmon-Pollack \cite{DDP} proved that Conjecture \ref{GSc} holds true under certain conditions.
In \cite{KY1}, we formulated a conjecture which expresses $\log_p {u'}^{\tau}$ (without the norm $N_{K_{\mathfrak{p}_{K}}/\mathbb{Q}_{p}}$)
by using not only $X_p(c,\iota)$ but also $X(c,\iota)$.
More precisely, we see that the ratio $[\exp(X(c,\iota)):\exp_p(X_p(c,\iota))]$ is well-defined up to roots of unity (Corollary \ref{crlofmain1}-(i)),
and our conjecture (Conjecture \ref{cnjky1}) states that 
$u'$ can be expressed as a product of $\frac{\exp(X(c,\iota))}{\exp_p(X_p(c,\iota))}$'s, up to $\ker \log_p$ (Proposition \ref{eqofcnjs}-(ii)).
One of the main results in this paper states that we can express a Stark unit as another product of $\frac{\exp(X(c,\iota))}{\exp_p(X_p(c,\iota))}$'s
(Corollary \ref{crlofmain1}-(ii)).
Roughly speaking, 
Stark units (\ref{Su}) and Gross-Stark units (\ref{GSu}) can be written 
as a product of $\frac{\exp(X(c,\iota))}{\exp_p(X_p(c,\iota))}$'s uniformly (Remark \ref{final}).

To clarify the meaning of the main results in this paper, we briefly describe the results in \cite{Ka2}.
Let $F=\mathbb Q$, $v$ the unique real place of $\mathbb Q$.
Then the Stark conjecture implies the following ``reciprocity law'' on the sine function (in other words, on cyclotomic units):
\begin{align} \label{rlos}
\sin(\tfrac{a}{m}\pi)^\tau=\pm \sin(\tau(\tfrac{a}{m})\pi) \quad (\tfrac{a}{m} \in \mathbb Q\cap (0,1),\ \tau \in \mathrm{Aut}(\mathbb C)).
\end{align}
Here we define $\tau(\tfrac{a}{m}) \in \mathbb Q\cap (0,1)$ by $\tau(\zeta_m^a)=\zeta_m^{\tau(\frac{a}{m})m}$ with $\zeta_m:=e^{\frac{2 \pi i}{m}}$.
In \cite{Ka2} we proved a reciprocity law (\ref{rlonb}) on a period-ring valued beta function, which is a refinement of (\ref{rlos}) in the following sense: 
For simplicity, we assume that $p\mid m$, $p\neq 2$ here.
We use the following notation.
\begin{align*}
&[a:b]  [c:d] :=[ac:bd], \\ 
&[a:b] = [c:d] \text{ means } \frac{a}{c} =\frac{b}{d}, \\
&[a:b] \equiv [c:d] \bmod \mu_\infty \text{ means } \frac{a}{c} \frac{d}{b} \in \mu_\infty,
\end{align*}
where $\mu_\infty$ denotes the group of all roots of unity.
\begin{itemize}
\item Let $\Gamma_p \colon \mathbb Q_p \ra \mathbb C_p^\times $ be a generalization \cite[Lemma 4.2]{Ka2} of Morita's $p$-adic gamma function.
When $F=\mathbb Q$, $\mathfrak f=(m)$, $c=[(a)]\in C_{(m)}$ $(a \in \mathbb N,\ (a,m)=1)$,
we see that 
\begin{align*}
[\exp(X([(a)],\mathrm{id})):\exp_p(X_p([(a)],\mathrm{id}))]
\equiv [\Gamma(\tfrac{a}{m})/\sqrt{2\pi}:\Gamma_p(\tfrac{a}{m})] \bmod \mu_\infty.
\end{align*}
Moreover we may regard this ratio as a refinement of a cyclotomic unit:
\begin{align*}
[\Gamma(\tfrac{a}{m})/\sqrt{2\pi}:\Gamma_p(\tfrac{a}{m})][\Gamma(\tfrac{m-a}{m})/\sqrt{2\pi}:\Gamma_p(\tfrac{m-a}{m})] 
\equiv [1:2\sin(\tfrac{a}{m}\pi)] \bmod \mu_\infty.
\end{align*}
\item We consider the $m$th Fermat curve $F_m\colon x^m+y^m=1$ 
and its differential forms $\eta_{\frac{a}{m},\frac{b}{m}}:=x^{a-1}y^{b-m}\mathit{d}x$ ($0<a,b<m$, $a+b\neq m$).
Although the period integral $\int_\gamma \eta_{\frac{a}{m},\frac{b}{m}}$ 
depends on the choice of a closed path $\gamma \subset F_m(\mathbb C)$,
the ratio $[\int_\gamma \eta_{\frac{a}{m},\frac{b}{m}}:\int_{p,\gamma} \eta_{\frac{a}{m},\frac{b}{m}}]$ is constant.
Here $\int_{p,\gamma} \eta_{\frac{a}{m},\frac{b}{m}} \in B_{\mathrm{dR}}$ is the $p$-adic period defined by the $p$-adic Hodge theory, 
$B_{\mathrm{dR}}$ is Fontaine's $p$-adic period ring.
Moreover we have
\begin{align*} \textstyle
[\int_\gamma \eta_{\frac{a}{m},\frac{b}{m}}:\int_{p,\gamma} \eta_{\frac{a}{m},\frac{b}{m}}]
[\int_\gamma \eta_{\frac{m-a}{m},\frac{m-b}{m}}:\int_{p,\gamma} \eta_{\frac{m-a}{m},\frac{m-b}{m}}]=[2 \pi i : (2 \pi i)_p],
\end{align*}
where $(2 \pi i)_p \in B_{\mathrm{dR}}$ is the $p$-adic counterpart of $2 \pi i$. 
\item Rohrlich's formula in \cite{Gr1} implies 
that $\frac{\int_\gamma \eta_{\frac{a}{m},\frac{b}{m}}}{\frac{\Gamma(\frac{a}{m})\Gamma(\frac{b}{m})}{\Gamma(\frac{a+b}{m})}} \in \overline{\mathbb Q}$.
It follows that 
\begin{align*}
\mathfrak B(\tfrac{a}{m},\tfrac{b}{m})
:=\frac{\frac{\Gamma(\frac{a}{m})\Gamma(\frac{b}{m})}{\Gamma(\frac{a+b}{m})}}{\int_{\gamma} \eta_{\frac{a}{m},\frac{b}{m}}}
\frac{\int_{p,\gamma} \eta_{\frac{a}{m},\frac{b}{m}}}{\frac{\Gamma_p(\frac{a}{m})\Gamma_p(\frac{b}{m})}{\Gamma_p(\frac{a+b}{m})}} 
\in B_{\mathrm{dR}}
\quad  (p \nmid ab(a+b))
\end{align*}
are well-defined up to $\mu_\infty$.
\item Coleman's formula in \cite{Co} implies that \cite[Theorem 7.2-(ii)]{Ka2}
\begin{align} \label{rlonb}
\Phi_\tau (\mathfrak B(\tfrac{a}{m},\tfrac{b}{m})) \equiv p^{\frac{\deg \tau}{2}} \mathfrak B(\tau(\tfrac{a}{m}),\tau(\tfrac{b}{m})) \bmod \mu_\infty 
\quad (\tau \in W_p).
\end{align}
Here we denote by $W_p \subset \mathrm{Gal}(\overline{\mathbb Q_p}/\mathbb Q_p)$ the Weil group, 
by $\Phi_\tau$ the absolute Frobenius automorphism associated with $\tau \in W_p$, acting on a subring of $B_\mathrm{dR}$.
\item We can express $2\sin(\tfrac{a}{m}\pi)$ as a product of rational powers of $\mathfrak B(\tfrac{a}{m},\tfrac{b}{m})$'s
up to $\mu_\infty,(2 \pi i)_p$.
Hence the reciprocity law (\ref{rlonb}) implies (\ref{rlos}), up to $\mu_\infty$.
\end{itemize}

Summarizing the above, when $F=\mathbb Q$ we gave an alternative proof of a part of the Stark conjecture
by studying the ratios $[$ $\Gamma$-function $:$ $p$-adic $\Gamma$-function $]$, 
$[$ periods of Fermat curves $:$ $p$-adic periods of Fermat curves  $]$.
The ratio $[\exp(X(c,\iota)):\exp_p(X_p(c,\iota))]$ is a generalization of the former one.
The main result in this paper, which expresses Stark units and Gross-Stark units in terms of $\frac{\exp(X(c,\iota))}{\exp_p(X_p(c,\iota))}$'s,  
is a significant step toward the generalization of the above results in \cite{Ka2}.

\begin{rmk}
\begin{enumerate}
\item For a generalization of the ratios $[$ periods of Fermat curves $:$ $p$-adic periods of Fermat curves  $]$, 
there are results by de Shalit \cite{dS} on the ratios $[$ CM-periods $:$ $p$-adic CM periods $]$ under a certain condition.
\item Yoshida's conjecture \cite[Chap.\ III, Conjecture 3.9]{Yo} is a generalization of Rohrlich's formula.
There is a slight generalization \cite[Conjecture 5.5]{Ka3}.
\item Yoshida and the author formulated conjectures in \cite{KY1,KY2} which are generalizations of Coleman' formula.
\end{enumerate}
\end{rmk}

\begin{rmk}
Dasgupta formulated a conjecture \cite[Conjecture 3.21]{Da} which is also a refinement of Conjecture \ref{GSc}:
A modified version of $u'$ of (\ref{GSu}) is expressed in terms of a $p$-adic measure associated with zeta values.
This conjecture is a further refinement of Conjecture \ref{cnjky1} by $\ker \log_p$.
More precisely, we can show the following:
Let $u_T(\mathfrak b ,\mathcal D)$ be as in \cite[Conjecture 3.21]{Da}, 
$Y_p(\tau,\mathrm{id})$ for $\tau \in \mathrm{Gal}(H/F)$ as in Proposition \ref{eqofcnjs}-(ii),
and $(\frac{H/F}{*})$ the Artin symbol.
For simplicity we take $T:=\{\eta\}$ with $\eta$ a prime ideal of $F$. Then we can show that (\cite{Ka4})
\begin{align*}
\log_{p}(u_{\eta}(\mathfrak b ,\mathcal D))
=-Y_{p}((\tfrac{H/F}{\mathfrak b}),\mathrm{id})
+\mathrm{N}\eta\,Y_{p}((\tfrac{H/F}{\mathfrak b \eta^{-1}}),\mathrm{id}). 
\end{align*}
Here we note that $\mathfrak f$ in \cite{Da} is $\mathfrak f\mathfrak p_{\mathrm{id}}$ in this paper.
\end{rmk}

The outline of this paper is structured as follows.
In \S 2, we introduce some basic properties of Barnes' multiple zeta functions, multiple gamma functions, and their $p$-adic analogues.
In \S 3, we recall a Proposition by Shintani, which is needed in order to relate the derivative values of the classical (resp.\ $p$-adic) partial zeta functions 
to the classical (resp.\ $p$-adic) multiple gamma functions in Theorem \ref{SYf} (resp.\ Theorem \ref{Kf}) in \S 4.
We note that the classical multiple gamma functions and the $p$-adic analogues have not yet been mixed at this point. 
In \S 4, we recall the definition and some properties of Yoshida's class invariant $X(c,\iota)$.
We also provide their $p$-adic analogues by quite similar arguments.
There are two applications of these properties:
First, in \S 5, we prove the algebraicity of some products of $\exp_p(X_p(c,\iota))$'s,
which is the $p$-adic analogue of the main results in \cite{Ka3}.
Then we can clarify the relation between Stark units and the ratios $[\exp(X(c,\iota)):\exp_p(X_p(c,\iota))]$.
Next, in \S 6, we prove that a conjecture \cite[Conjecture 5.10]{KY1} on the ratios $[\exp(X(c,\iota)):\exp_p(X_p(c,\iota))]$ holds true, 
and also clarify the relation between Gross-Stark units and the same ratios. 

\section{Multiple zeta and gamma functions}

In this section, we review some basic properties of Barnes' multiple zeta and gamma functions, and their $p$-adic analogues.
For omitted proofs and details, we refer to \cite[Chap.\ I, \S 1]{Yo}, \cite{Ka1}. 
We denote by $\prn$ the set of all positive real numbers, by $\nni$ the set of all non-negative integers.

\begin{dfn} \label{mzf} 
\begin{enumerate}
\item Let $z \in \prn$, $\bm v \in \prn^r$. Then Barnes' multiple zeta function is defined as
\begin{align*}
\zeta(s,\bm v,z):=\sum_{\bm m \in \nni^r}\left(z+\bm m \ttt \bm v\right)^{-s} \quad (\mathrm{Re}(s)>r),
\end{align*}
where $\bm m \ttt \bm v$ denotes the inner product.
\item Let $k$ be a field of characteristic $0$. 
(We may assume that $k=\mathbb C$ or $\mathbb C_p$.)
Let $\bm v=(v_1,\dots,v_r)\in (k^\times)^r$, $\bm x=(x_1,\dots,x_r) \in k^r$.
Then we define ``formal multiple zeta values'' for $m \in \nni$ as
\begin{align*}
\zeta_{\mathrm{fml}}(-m,\bm v,\bm x \ttt \bm v):=(-1)^r m!\sum_{|\bm l|=m+r} \prod_{i=1}^r \frac{B_{l_i}(x_i)v_i^{l_i-1}}{l_i!} \in k.
\end{align*}
Here we denote the $l$th Bernoulli polynomial by $B_l(x)$ and $\bm l$ in the sum 
runs over all $\bm l =(l_1,\dots,l_r) \in \nni^r$ satisfying $|\bm l|:=l_1+\dots + l_r=m+r$.
\end{enumerate}
\end{dfn}

\begin{prp}
The series in the definition of $\zeta(s,\bm v,z)$ converges for $\mathrm{Re}(s)>r$, 
has a meromorphic continuation to $\mathbb C$, which is analytic at $s=0,-1,-2,\dots$.
Moreover when $\bm v \in \prn^r$, $\bm x \ttt \bm v \in \prn$, $m \in \nni$ we have 
\begin{align*}
\zeta(-m,\bm v,\bm x \ttt \bm v)=\zeta_{\mathrm{fml}}(-m,\bm v,\bm x \ttt \bm v).
\end{align*}
\end{prp}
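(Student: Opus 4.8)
The plan is to run the classical Barnes--Shintani argument through a Mellin-type integral representation (cf.\ \cite[Chap.\ I, \S 1]{Yo}, \cite{Ka1}). First, the series $\sum_{\bm m\in\nni^r}(z+\bm m\ttt\bm v)^{-s}$ converges absolutely for $\mathrm{Re}(s)>r$ by comparison with $\sum_{\bm m\in\nni^r}(1+|\bm m|)^{-\mathrm{Re}(s)}$. To continue it, I would sum the geometric series $\sum_{m_i\ge 0}e^{-m_iv_it}=(1-e^{-v_it})^{-1}$ over $i=1,\dots,r$ and combine it with the Mellin identity $\Gamma(s)(z+\bm m\ttt\bm v)^{-s}=\int_0^\infty t^{s-1}e^{-(z+\bm m\ttt\bm v)t}\,dt$. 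Interchanging sum and integral — legitimate by absolute convergence, which is exactly where $\mathrm{Re}(s)>r$ is used, since the kernel
\[
G(t):=\frac{e^{-zt}}{\prod_{i=1}^r(1-e^{-v_it})}
\]
behaves like $(v_1\cdots v_r)^{-1}t^{-r}$ near $t=0$ — gives $\Gamma(s)\,\zeta(s,\bm v,z)=\int_0^\infty t^{s-1}G(t)\,dt$ for $\mathrm{Re}(s)>r$.

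Next, for the meromorphic continuation I would fix $0<\delta<2\pi/\max_i v_i$ and split the integral at $t=\delta$: $\int_0^\infty=\int_0^\delta+\int_\delta^\infty$. The tail $\int_\delta^\infty t^{s-1}G(t)\,dt$ is entire in $s$ because $G$ is continuous and decays exponentially on $[\delta,\infty)$. On $0<t<\delta$ the function $G$ has a convergent Laurent expansion $G(t)=\sum_{n\ge -r}c_nt^n$ with a pole of order at most $r$ at the origin; truncating at order $N$ and integrating term by term yields
\[
\int_0^\delta t^{s-1}G(t)\,dt=\sum_{n=-r}^N\frac{c_n\,\delta^{s+n}}{s+n}+H_N(s),
\]
with $H_N$ holomorphic for $\mathrm{Re}(s)>-N-1$. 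Letting $N\to\infty$ exhibits $\Gamma(s)\zeta(s,\bm v,z)$, and hence $\zeta(s,\bm v,z)=\Gamma(s)^{-1}\cdot\Gamma(s)\zeta(s,\bm v,z)$, as a meromorphic function on $\mathbb C$ with at worst simple poles. The prospective poles at $s=-n$ with $n\in\nni$, coming from $c_n\delta^{s+n}/(s+n)$, are cancelled by the simple zeros of the entire function $\Gamma(s)^{-1}$, so $\zeta(s,\bm v,z)$ is analytic at $s=0,-1,-2,\dots$ (only the poles at $s=1,\dots,r$ may survive).

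For the special values, I would read the value at $s=-m$ off the two expansions just obtained: near $s=-m$ the function $\Gamma(s)\zeta(s,\bm v,z)$ has a simple pole of residue $c_m$ (the $n=m$ term, since $\delta^{s+m}|_{s=-m}=1$), while $\Gamma(s)$ has a simple pole of residue $(-1)^m/m!$; taking the quotient gives $\zeta(-m,\bm v,z)=(-1)^m m!\,c_m$. To identify $c_m$, write $z=\bm x\ttt\bm v=\sum_i x_iv_i$, which factors the kernel as $G(t)=\prod_{i=1}^r\frac{e^{-x_iv_it}}{1-e^{-v_it}}$; from the Bernoulli generating function $\frac{te^{xt}}{e^t-1}=\sum_{l\ge 0}B_l(x)\frac{t^l}{l!}$ (substituting $t\mapsto-v_it$, $x\mapsto x_i$) one gets
\[
\frac{e^{-x_iv_it}}{1-e^{-v_it}}=\sum_{l_i\ge 0}(-1)^{l_i}\frac{B_{l_i}(x_i)v_i^{l_i-1}}{l_i!}\,t^{l_i-1}.
\]
Multiplying these $r$ series and extracting the coefficient of $t^m$ (so $l_1+\dots+l_r=m+r$) gives $c_m=(-1)^{m+r}\sum_{|\bm l|=m+r}\prod_{i=1}^r\frac{B_{l_i}(x_i)v_i^{l_i-1}}{l_i!}$, and therefore
\[
\zeta(-m,\bm v,z)=(-1)^m m!\,c_m=(-1)^r m!\sum_{|\bm l|=m+r}\prod_{i=1}^r\frac{B_{l_i}(x_i)v_i^{l_i-1}}{l_i!}=\zeta_{\mathrm{fml}}(-m,\bm v,\bm x\ttt\bm v).
\]

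I expect no deep obstacle; the argument is standard. The two points needing genuine care are the justification of the sum/integral interchange, which is what pins down the hypothesis $\mathrm{Re}(s)>r$, and the sign bookkeeping in the last step: the index shift $l_i=k_i+1$ together with the parity of $|\bm l|=m+r$ must be tracked consistently so that the clean factor $(-1)^r$ — rather than $(-1)^{m+r}$ — emerges in the final formula.
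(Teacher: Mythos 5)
Your proof is correct: the convergence estimate, the split Mellin representation $\Gamma(s)\zeta(s,\bm v,z)=\int_0^\infty t^{s-1}e^{-zt}\prod_{i=1}^r(1-e^{-v_it})^{-1}\,dt$, the cancellation of the poles at $s=-m$ by the zeros of $\Gamma(s)^{-1}$, and the identification $\zeta(-m,\bm v,\bm x\ttt\bm v)=(-1)^m m!\,c_m$ via the Bernoulli generating function all check out, including the sign $(-1)^{m+r}\cdot(-1)^m=(-1)^r$. The paper omits the proof and refers to \cite[Chap.~I, \S 1]{Yo} and \cite{Ka1}, where the continuation and special values are obtained from the same kernel $e^{-zt}/\prod_{i=1}^r(1-e^{-v_it})$ (via a Hankel-contour form of your split Mellin integral) and the same Bernoulli expansion, so your argument is essentially the standard one being cited.
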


\begin{dfn} \label{mgf}
Let $z \in \prn$, $\bm v \in \prn^r$. 
Then we define Barnes' multiple gamma function by
\begin{align*}
\Gamma(z,\bm v):=\exp\left(\frac{\partial}{\partial s}\zeta(s,\bm v,z)|_{s=0}\right).
\end{align*}
Note that this definition is modified 
from the original one $\frac{\Gamma(z,\bm v)}{\rho(z)}=\exp\left(\frac{\partial}{\partial s}\zeta(s,\bm v,z)|_{s=0}\right)$ with a correction term $\rho(z)$.
\end{dfn}

We fix embeddings $\overline{\mathbb Q} \hookrightarrow \mathbb C$, $\overline{\mathbb Q} \hookrightarrow\mathbb C_p$ throughout this paper.
In particular $p^r \in \mathbb C_p$ is well-defined for each $r \in \mathbb Q$.

\begin{dfn} \label{pmzf}
\begin{enumerate}
\item We put
\begin{align*}
P&:=\{p^r \in \mathbb C_p \mid r \in \mathbb Q\}, \\
1+M&:=\{z \in \mathbb C_p \mid |z-1|_p<1\}, \\
\mu_{(p)}&:=\{z \in \mathbb C_p \mid z^n=1 \text{ for some } n \in \mathbb N \text{ with } p \nmid n \}
\end{align*}
and consider the following  decomposition 
\begin{align*}
\mathbb C_p^\times \ &\st{\cong}\ra \quad P \quad \times \quad \mu_{(p)}\quad  \times \quad 1+M, \\
z\quad &\mt (p^{\mathrm{ord}_p(z)},\quad \theta_p(z)\quad \, ,\qquad \langle z \rangle).
\end{align*}
More precisely, let
\begin{align*}
\mathrm{ord}_p \colon \mathbb C_p^\times \ra \mathbb Q, \quad \theta_p\colon \mathbb C_p^\times \ra \mu_{(p)}
\end{align*}
be unique group homomorphisms satisfying 
\begin{align*}
|p^{-\mathrm{ord}_p(z)}\theta_p(z)^{-1}z-1|_p<1.
\end{align*}
Then we put 
\begin{align*}
\langle z \rangle:=p^{-\mathrm{ord}_p(z)}\theta_p(z)^{-1}z.
\end{align*}
\item For $z \in 1+M$, $s \in \mathbb Z_p$, we put
\begin{align*}
z^s:=\sum_{k=1}^\infty \binom{s}{k} (z-1)^k,
\end{align*}
where $\binom{s}{k}$ denotes the binomial coefficient.
\item We denote the Iwasawa $p$-adic logarithmic function by $\log_p$, which is defined as
\begin{align*}
\log_p z:=\sum_{k=1}^\infty \frac{(-1)^{k-1}(\langle z \rangle -1)^k}{k} \quad (z \in \mathbb C_p^\times).
\end{align*}
\item Let $r \in \mathbb N$. For a function $f \colon \mathbb Z_p^r \ra \mathbb C_p$, we put
\begin{align*}
I(f):=\lim_{l_1\ra \infty}\dots \lim_{l_r\ra \infty} \frac{1}{p^{l_1+\dots+l_r}} \sum_{n_1=0}^{p^{l_1}-1} \dots \sum_{n_r=0}^{p^{l_r}-1} f(n_1,\dots,n_r)
\end{align*}
whenever the limit exists.
\item Let $z \in \mathbb C_p^\times$, $\bm v=(v_1,\dots,v_r) \in \left(\mathbb C_p^\times\right)^r$. For simplicity we assume that 
\begin{align} \label{cond}
\mathrm{ord}_p(z)< \mathrm{ord}_p(v_1),\dots,\mathrm{ord}_p(v_r).
\end{align}
Then we put 
\begin{align*}
f_{z,\bm v,s} \colon \mathbb Z_p^r \ra \mathbb C_p, \ \bm x \mt \frac{(z+\bm x\ttt \bm v)^r}{v_1\dotsm v_r}\langle z+\bm x\ttt \bm v \rangle^{-s}
\end{align*}
and define the $p$-adic multiple zeta function by
\begin{align*}
\zeta_p(s,\bm v,z):=\frac{(-1)^rI(f_{z,\bm v,s})}{(r-s)(r-1-s)\dotsm(1-s)}.
\end{align*}
\end{enumerate}
\end{dfn}

\begin{prp} \label{prpofzp}
\begin{enumerate}
\item $\zeta_p(s,\bm v,z)$ is analytic at $s=0$, and continuous on $s \in \mathbb Z_p-\{1,2,\dots,r\}$.
\item $\zeta_p(s,\bm v,z)$ is continuous for $\bm v,z$.
\item Let $z \in \overline{\mathbb Q}$, $\bm v=(v_1,\dots,v_r) \in \overline{\mathbb Q}^r$. We assume that 
\begin{align*}
&z,v_1,\dots,v_r \in \prn\ (\text{via the embedding } \overline{\mathbb Q} \hookrightarrow \mathbb C), \\
&\mathrm{ord}_p(z)< \mathrm{ord}_p(v_1),\dots,\mathrm{ord}_p(v_r)\ (\text{via the embedding } 
\overline{\mathbb Q} \hookrightarrow \mathbb C_p).
\end{align*}
Then $\zeta_p(s,\bm v,z)$ satisfies the following $p$-adic interpolation property:
\begin{align*}
\zeta_p(-m,\bm v,z)=p^{-\mathrm{ord}_p(z) m}\theta_p(z)^{-m} \zeta(-m,\bm v,z) \quad (m \in \nni).
\end{align*}
\end{enumerate}
\end{prp}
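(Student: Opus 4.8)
The plan is to reduce both the classical and the $p$-adic computations to one algebraic fact about the averaging operator $I$: for a monomial $\bm x\mapsto x_1^{k_1}\cdots x_r^{k_r}$ on $\mathbb Z_p^r$ one has $I(x_1^{k_1}\cdots x_r^{k_r})=B_{k_1}\cdots B_{k_r}$, the product of Bernoulli numbers. This is immediate from Faulhaber's formula $\sum_{n=0}^{N-1}n^k=\frac{B_{k+1}(N)-B_{k+1}}{k+1}$ upon letting $N=p^l\to 0$ as $l\to\infty$, combined with the observation that the iterated average of a product of one-variable functions factors as the product of the averages. Thus $I$, restricted to polynomial functions, is the ``Bernoulli substitution'' functional --- exactly the one computing the formal values $\zeta_{\mathrm{fml}}$ of Definition~\ref{mzf}. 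I will also use two routine facts throughout: $|I(g)|_p\le\|g\|_{\sup}$ for continuous $g$ (since $I$ is a limit of $p$-adic averages), and Mahler's bound $|\binom{-s}{k}|_p\le 1$ for $s\in\mathbb Z_p$; together they justify interchanging $I$ with uniformly convergent series.

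First I would record what (\ref{cond}) gives. Both $\mathrm{ord}_p$ and $\theta_p$ are locally constant on $\mathbb C_p^\times$, and if $\mathrm{ord}_p(z)<\mathrm{ord}_p(v_i)$ for all $i$, then $\mathrm{ord}_p(\bm x\ttt\bm v/z)>0$ for every $\bm x\in\mathbb Z_p^r$, so $1+\bm x\ttt\bm v/z\in 1+M$; hence $\mathrm{ord}_p(z+\bm x\ttt\bm v)=\mathrm{ord}_p(z)$, $\theta_p(z+\bm x\ttt\bm v)=\theta_p(z)$, and $\langle z+\bm x\ttt\bm v\rangle=\langle z\rangle\,(1+\bm x\ttt\bm v/z)$. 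In particular $(z+\bm x\ttt\bm v)^r/(v_1\cdots v_r)$ has constant absolute value on $\mathbb Z_p^r$ and $|\bm x\ttt\bm v/z|_p\le\rho$ for a fixed $\rho<1$. Expanding $\langle z+\bm x\ttt\bm v\rangle^{-s}=\langle z\rangle^{-s}\sum_{k\ge0}\binom{-s}{k}(\bm x\ttt\bm v/z)^k$ (Definition~\ref{pmzf}(ii)) gives $f_{z,\bm v,s}=\langle z\rangle^{-s}\sum_{k\ge0}\binom{-s}{k}g_k$, where $g_k(\bm x):=\frac{(z+\bm x\ttt\bm v)^r}{v_1\cdots v_r}(\bm x\ttt\bm v/z)^k$ is a polynomial and the series converges uniformly on $\mathbb Z_p^r$.

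For (i) and (ii): by the interchange property, $I(f_{z,\bm v,s})=\langle z\rangle^{-s}\sum_{k\ge0}\binom{-s}{k}c_k$ with $c_k:=I(g_k)$ satisfying $|c_k|_p\le C\rho^k$ for a fixed $C$. Passing to the Mahler basis $\{\binom{s}{i}\}$, this is $\sum_{i\ge0}e_i\binom{s}{i}$ with $|e_i|_p\le C\rho^i$, a function of $s$ that converges uniformly on $\mathbb Z_p$ and, by the geometric decay of its Mahler coefficients, is locally analytic --- in particular analytic near $s=0$. Multiplying by $\langle z\rangle^{-s}$ (analytic near $s=0$, as $\langle z\rangle\in 1+M$) and dividing by $(r-s)(r-1-s)\cdots(1-s)$ (equal to $r!\ne0$ at $s=0$, vanishing only on $\{1,\dots,r\}$) yields that $\zeta_p(s,\bm v,z)$ is analytic at $s=0$ and continuous on $\mathbb Z_p-\{1,\dots,r\}$, proving (i). For (ii), note that (\ref{cond}) cuts out an open set in $(\bm v,z)$-space on which $\mathrm{ord}_p$, $\theta_p$, hence $\langle z\rangle$, are locally constant; each $c_k=c_k(\bm v,z)$ is a polynomial in $v_i, v_i^{-1}, z, z^{-1}$ and Bernoulli numbers, and the bound $|c_k(\bm v,z)|_p\le C\rho^k$ holds locally uniformly, so $\zeta_p(s,\bm v,z)$ is continuous in $(\bm v,z)$.

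For (iii), put $s=-m$. By the reduction above, $\langle z+\bm x\ttt\bm v\rangle^{m}=p^{-m\,\mathrm{ord}_p(z)}\theta_p(z)^{-m}(z+\bm x\ttt\bm v)^{m}$, so $f_{z,\bm v,-m}(\bm x)=p^{-m\,\mathrm{ord}_p(z)}\theta_p(z)^{-m}\frac{(z+\bm x\ttt\bm v)^{m+r}}{v_1\cdots v_r}$, a polynomial in $\bm x$. Computing $I$ of it via $I(x_1^{k_1}\cdots x_r^{k_r})=B_{k_1}\cdots B_{k_r}$ and the identity $B_n(X+Y)=\sum_k\binom nk B_k(X)Y^{n-k}$ applied one variable at a time, then comparing with Definition~\ref{mzf}(ii) (writing $z=\bm x_0\ttt\bm v$; the right-hand side there depends only on $\bm v$ and $\bm x_0\ttt\bm v$), identifies $I\!\left(\frac{(z+\bm x\ttt\bm v)^{m+r}}{v_1\cdots v_r}\right)$ with $(-1)^r(m+1)(m+2)\cdots(m+r)\,\zeta_{\mathrm{fml}}(-m,\bm v,z)$. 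Since $(r+m)(r-1+m)\cdots(1+m)=(m+1)(m+2)\cdots(m+r)$ and $\zeta_{\mathrm{fml}}(-m,\bm v,z)=\zeta(-m,\bm v,z)$ by the special-value formula stated before Definition~\ref{mgf} (valid because $z,v_1,\dots,v_r\in\prn$), we get $\zeta_p(-m,\bm v,z)=\frac{(-1)^rI(f_{z,\bm v,-m})}{(r+m)\cdots(1+m)}=p^{-m\,\mathrm{ord}_p(z)}\theta_p(z)^{-m}\zeta(-m,\bm v,z)$. I expect the main obstacle to be precisely this last algebraic identification --- a finite but notation-heavy induction on $r$ with Bernoulli polynomials; the clean way to package it is the observation that $P\mapsto I(P)$ on polynomials is the very functional defining $\zeta_{\mathrm{fml}}$, so the $p$-adic side coincides with the classical formula up to the explicit unit $p^{-m\,\mathrm{ord}_p(z)}\theta_p(z)^{-m}$, which is just the discrepancy between $\langle z+\bm x\ttt\bm v\rangle^{m}$ and $(z+\bm x\ttt\bm v)^{m}$. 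By contrast, the analytic claims (i) and (ii) are routine once the two norm estimates above are in place.
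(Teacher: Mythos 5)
Your strategy is essentially a self-contained reproof of what the paper simply cites: the paper disposes of (i) and (iii) by referring to [Ka1, Lemma 5.3-1, Theorem 5.1], and only proves (ii) directly, by the same device you use at the end (the integrand at $s=-m$ is a polynomial, so the values at the dense set of non-positive integers control everything). Your part (iii) is correct and complete in outline: $f_{z,\bm v,-m}$ is a polynomial in $\bm x$, the identity $I\bigl(\prod_i (y_i+x_i)^{k_i}\bigr)=\prod_i B_{k_i}(y_i)$ follows from Faulhaber as you say, and your bookkeeping of the factor $(m+1)\cdots(m+r)$ and of $p^{-m\,\mathrm{ord}_p(z)}\theta_p(z)^{-m}$ matches the definition of $\zeta_{\mathrm{fml}}$ and the denominator $(r-s)\cdots(1-s)$ at $s=-m$.

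There is, however, a genuine gap in the analytic backbone of (i) and (ii): the ``routine fact'' $|I(g)|_p\le \Vert g\Vert_{\sup}$ is false. The finite averages are not sup-norm contractions (dividing by $p^{l_1+\cdots+l_r}$ multiplies norms by $p^{l_1+\cdots+l_r}$), and the limit functional is genuinely unbounded for the sup norm: for $r=1$ and $g(x)=x^{p-1}$ one has $\Vert g\Vert_{\sup}=1$ but $I(g)=B_{p-1}$ with $|B_{p-1}|_p=p$ by von Staudt--Clausen (for $p=2$ take $g(x)=x$, $I(g)=-\tfrac12$). Consequently neither the termwise interchange $I(f_{z,\bm v,s})=\langle z\rangle^{-s}\sum_k\binom{-s}{k}I(g_k)$ --- which moreover presupposes the existence of $I(f_{z,\bm v,s})$ for non-integral $s$, exactly the content of the cited lemma --- nor the bound $|c_k|_p\le C\rho^k$ is justified as written. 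The repair is standard but has to be made explicit: on polynomials $I$ acts coefficientwise through products of Bernoulli numbers, so von Staudt--Clausen gives the degree-independent bound $|I(g)|_p\le p^r\cdot\max(\text{coefficient norms})$, which restores $|c_k|_p\le p^rC\rho^k$; and to pass to the limit termwise you need an estimate on the finite averages of the tails $\sum_{k>N}\binom{-s}{k}g_k$ that is uniform in the level $(l_1,\dots,l_r)$ (obtainable from the Bernoulli-polynomial expressions for the partial averages, or from the strict-differentiability estimate for the iterated Volkenborn limit), not from the sup norm. With those substitutions your argument goes through and is essentially the route of the reference the paper invokes.
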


\begin{proof}
(i), (iii) follow from \cite[Lemma 5.3-1, Theorem 5.1]{Ka1} respectively.
Let $z,z' \in \mathbb C_p^\times$, $\bm v,\bm v' \in \left(\mathbb C_p^\times\right)^r$ satisfy (\ref{cond}).
When $z' \ra z$, we may assume that $\mathrm{ord}_p(z)=\mathrm{ord}_p(z')$, $\theta_p(z)=\theta_p(z')$. 
Then we see that for $m \in \nni$
\begin{align*}
\sup_{\bm x \in \mathbb Z_p^r}|f_{z,\bm v,-m}(\bm x)-f_{z',\bm v',-m}(\bm x)|_p \ra 0 \ (z' \ra z,\ \bm v' \ra \bm v).
\end{align*}
Hence by \cite[(5.3)]{Ka1} we have 
\begin{align*}
|\zeta_p(-m,\bm v,z)-\zeta_p(-m,\bm v',z')|_p \ra 0 \ (z' \ra z, \bm v' \ra \bm v).
\end{align*}
Since non-positive integers are dense in $\mathbb Z_p$, the assertion (ii) is clear.
\end{proof}

\begin{dfn} \label{pmgf}
Let $z \in \mathbb C_p^\times$, $\bm v \in \left(\mathbb C_p^\times\right)^r$ satisfy (\ref{cond}).
Then we define the $p$-adic $\log$ multiple gamma function by
\begin{align*}
L\Gamma_p(z,\bm v):=\frac{\partial}{\partial s}\zeta_p(s,\bm v,z)|_{s=0}.
\end{align*}
\end{dfn}

\begin{prp} \label{prpofmg}
\begin{enumerate}
\item Let $z \in \prn$, $\bm v \in \prn^r$, $\alpha \in \prn$. Then we have
\begin{align*}
\zeta(s,\alpha \bm v,\alpha z)&=\alpha^{-s}\zeta(s,\bm v,z), \\
\Gamma(\alpha z,\alpha \bm v)&=\Gamma(z,\bm v)\alpha^{-\zeta(0,\bm v,z)}.
\end{align*}
\item Let $z \in \mathbb C_p^\times$, $\bm v \in \left(\mathbb C_p^\times\right)^r$ satisfy (\ref{cond}).
Let $\alpha \in \mathbb C_p^\times$.
Then we have
\begin{align*}
\zeta_p(s,\alpha \bm v,\alpha z)&=\langle\alpha\rangle^{-s}\zeta_p(s,\bm v,z), \\
L\Gamma_p(\alpha z,\alpha \bm v)&=L\Gamma_p(z,\bm v)-\zeta_p(0,\bm v,z)\log_p \alpha.
\end{align*}
\end{enumerate}
\end{prp}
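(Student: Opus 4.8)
The plan is to handle (i) and (ii) in parallel: in each case first establish the identity for the (multiple) zeta function, then deduce the gamma-function identity by differentiating at $s=0$.

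For (i), I would start in the half-plane $\mathrm{Re}(s)>r$, where the series defining $\zeta(s,\alpha\bm v,\alpha z)$ converges absolutely. Termwise $(\alpha z+\bm m\ttt\alpha\bm v)^{-s}=\alpha^{-s}(z+\bm m\ttt\bm v)^{-s}$, so $\zeta(s,\alpha\bm v,\alpha z)=\alpha^{-s}\zeta(s,\bm v,z)$ on that half-plane. Both sides are meromorphic on $\mathbb C$ and analytic at $s=0$ (the left-hand side by the meromorphic continuation recalled above, the right-hand side since $\alpha^{-s}$ is entire), so by uniqueness of analytic continuation the identity persists near $s=0$. Differentiating and evaluating at $s=0$ gives $\frac{\partial}{\partial s}\zeta(s,\alpha\bm v,\alpha z)|_{s=0}=-\log\alpha\cdot\zeta(0,\bm v,z)+\frac{\partial}{\partial s}\zeta(s,\bm v,z)|_{s=0}$, and exponentiating (Definition \ref{mgf}) yields $\Gamma(\alpha z,\alpha\bm v)=\Gamma(z,\bm v)\alpha^{-\zeta(0,\bm v,z)}$.

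For (ii), first note that $\alpha z,\alpha\bm v$ again satisfy (\ref{cond}) because $\mathrm{ord}_p$ is additive, so both sides are defined, and the denominator $(r-s)(r-1-s)\dotsm(1-s)$ in the definition of $\zeta_p$ is unaffected by the scaling. The crux is the behaviour of $f_{z,\bm v,s}$ under $z\mapsto\alpha z$, $\bm v\mapsto\alpha\bm v$: the prefactor $(z+\bm x\ttt\bm v)^r/(v_1\dotsm v_r)$ is multiplied by $\alpha^r/\alpha^r=1$, while $\langle\alpha(z+\bm x\ttt\bm v)\rangle^{-s}=\langle\alpha\rangle^{-s}\langle z+\bm x\ttt\bm v\rangle^{-s}$. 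The last equality uses that the decomposition in Definition \ref{pmzf}-(i) is an isomorphism of groups, so $\langle\cdot\rangle$ is multiplicative, together with the fact that $u\mapsto u^s$ is a homomorphism of $1+M$ for each fixed $s\in\mathbb Z_p$; note that (\ref{cond}) forces $\mathrm{ord}_p(z+\bm x\ttt\bm v)=\mathrm{ord}_p(z)$ for all $\bm x\in\mathbb Z_p^r$, so $z+\bm x\ttt\bm v\neq 0$ and all the $\langle\cdot\rangle$'s make sense. Hence $f_{\alpha z,\alpha\bm v,s}=\langle\alpha\rangle^{-s}f_{z,\bm v,s}$ as functions on $\mathbb Z_p^r$, and linearity of the functional $I$ gives $\zeta_p(s,\alpha\bm v,\alpha z)=\langle\alpha\rangle^{-s}\zeta_p(s,\bm v,z)$. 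Finally, since $\langle\alpha\rangle^{-s}=\exp_p(-s\log_p\langle\alpha\rangle)$ with $\log_p\langle\alpha\rangle=\log_p\alpha$ (the Iwasawa logarithm depends only on the $1+M$-component), we have $\langle\alpha\rangle^{0}=1$ and $\frac{\partial}{\partial s}\langle\alpha\rangle^{-s}|_{s=0}=-\log_p\alpha$; applying the product rule to $\zeta_p(s,\alpha\bm v,\alpha z)=\langle\alpha\rangle^{-s}\zeta_p(s,\bm v,z)$ and using Definition \ref{pmgf} gives $L\Gamma_p(\alpha z,\alpha\bm v)=L\Gamma_p(z,\bm v)-\zeta_p(0,\bm v,z)\log_p\alpha$.

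None of this is deep. The only step requiring genuine care is in (ii): verifying that (\ref{cond}) is stable under scaling by $\alpha$, and exploiting the group-theoretic structure of $\mathbb C_p^\times\cong P\times\mu_{(p)}\times(1+M)$ — that $\langle\cdot\rangle$ and $(-)^s$ (for fixed $s$) are homomorphisms — in order to split off the factor $\langle\alpha\rangle^{-s}$. Everything else reduces to linearity of $I$ and the differentiation of elementary functions of $s$.
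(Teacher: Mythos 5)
Your argument is correct and is exactly the routine verification the paper has in mind: its proof of this proposition just says the identities follow immediately from Definitions \ref{mzf}-(i), \ref{mgf}, \ref{pmzf}, \ref{pmgf}, which is what you carry out (termwise scaling plus analytic continuation and differentiation at $s=0$ in (i); stability of (\ref{cond}), multiplicativity of $\langle\cdot\rangle$, linearity of $I$, and differentiation at $s=0$ in (ii)). No discrepancy with the paper's approach, only more detail.
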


\begin{proof}
Follows immediately from Definitions \ref{mzf}-(i), \ref{mgf}, \ref{pmzf}, \ref{pmgf}. 
\end{proof}

\section{Shintani domains}

\begin{dfn}
Let $F$ be a totally real field of degree $n$. 
\begin{enumerate}
\item Let $\bm v_1,\dots,\bm v_r \in \mathbb R^n$ be linearly independent vectors.
We define an ($r$-dimensional open simplicial) cone with basis $\bm v_1,\dots,\bm v_r$ as 
\begin{align*}
C(\bm v_1,\dots,\bm v_r):=\{t_1\bm v_1+\dots+t_r\bm v_r \mid t_i \in \prn\} \subset \mathbb R^n.
\end{align*}
\item We identify 
\begin{align*}
F\otimes \mathbb R = \mathbb R^n, \quad \sum_{i=1}^k \alpha_i\otimes \beta_i \mt \left(\sum_{i=1}^k \iota(\alpha_i)\beta_i \right)_{\iota \in \HFR}.
\end{align*}
In particular, we regard $F$ as a subset of $F\otimes \mathbb R=\mathbb R^n$.
We say that a cone $C(\bm v_1,\dots,\bm v_r)$ is a cone of $F$ if $\bm v_1,\dots,\bm v_r \in \mathcal O_F$.
\item Considering $F \subset F\otimes \mathbb R = \mathbb R^n$, we put
\begin{align*}
F\otimes \mathbb R_+ &:=\prn^n, \\
F_+&:=F \cap (F\otimes \mathbb R_+)=\{z \in F \mid \iota \in \HFR \R \iota(z)>0\}, \\
\mathcal O_{F,+}&:=\mathcal O_F \cap F_+, \\
E_{F,+}&:=\mathcal O_F^\times \cap F_+.
\end{align*}
\end{enumerate}
\end{dfn}

\begin{thm}[{\cite[Proposition 4]{Shin1}}] \label{Sd}
We consider the natural action $E_{F,+} \curvearrowright F\otimes \mathbb R_+$, $u(\alpha\otimes\beta):=(u\alpha)\otimes\beta$.
Then there exists a fundamental domain $D$ of $F\otimes \mathbb R_+/E_{F,+}$ which can be written as a finite disjoint union of cones of $F$.
Namely there exist $v_{ij} \in \mathcal O_{F,+}$ $(j \in J,1\leq i\leq r(j),\ |J|<\infty,\ r(j) \in \mathbb N)$ satisfying  
\begin{align*}
F\otimes \mathbb R_+=\coprod_{u \in E_{F,+}} uD, \quad 
D=\coprod_{j \in J} C(v_{j1},\dots,v_{jr(j)}).
\end{align*}
We call such a $D$ a Shintani domain.
\end{thm}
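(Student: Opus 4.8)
\noindent\emph{Proof proposal.} The plan is to linearise the multiplicative $E_{F,+}$-action by the coordinatewise logarithm, reduce the statement to the construction of a convenient fundamental domain for a lattice acting by translations on a Euclidean space, and then transport the combinatorial data back into $F\otimes\mathbb{R}_+$ via simplicial cones.

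Put $n=[F:\mathbb{Q}]$ and use the homeomorphism $\log\colon F\otimes\mathbb{R}_+\to\mathbb{R}^n$, $(x_\iota)_\iota\mapsto(\log x_\iota)_\iota$, which carries the action $u\cdot(x_\iota)_\iota=(\iota(u)x_\iota)_\iota$ into translation by $\ell(u):=(\log\iota(u))_\iota$. Since $F$ is totally real, $E_{F,+}$ is torsion-free; by Dirichlet's unit theorem it is free of rank $n-1$, and because every $u\in E_{F,+}$ has norm $1$, the image $\Lambda:=\ell(E_{F,+})$ is a full lattice in the trace-zero hyperplane $H=\{y\in\mathbb{R}^n\mid\sum_\iota y_\iota=0\}$. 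Decomposing $\mathbb{R}^n=H\oplus\mathbb{R}\mathbf{1}$ with $\mathbf{1}=(1,\dots,1)$, the line $\mathbb{R}\mathbf{1}$ corresponds under $\log$ to scaling by positive reals, on which $E_{F,+}$ acts trivially; hence for any fundamental domain $\Phi$ of $\Lambda\curvearrowright H$ the set $\log^{-1}(\Phi+\mathbb{R}\mathbf{1})$ is a fundamental domain for $E_{F,+}\curvearrowright F\otimes\mathbb{R}_+$, and it is automatically a cone. Identifying the space of rays of $F\otimes\mathbb{R}_+$ with $H$ (via $\log$ modulo $\mathbb{R}\mathbf{1}$) and with the open cross-section simplex $\Delta:=\{x\in F\otimes\mathbb{R}_+\mid\sum_\iota x_\iota=1\}$, it therefore suffices to find a fundamental domain for $E_{F,+}\curvearrowright\Delta$ that splits as a finite disjoint union of relatively open straight simplices whose vertices lie among the points $\bar\alpha:=\alpha/(\sum_\iota\iota(\alpha))$ with $\alpha\in\mathcal{O}_{F,+}$: a short computation shows that the relatively open simplex on vertices $\bar\alpha_1,\dots,\bar\alpha_r$ is exactly the cross-section of the simplicial cone $C(\alpha_1,\dots,\alpha_r)$ of $F$ (the $\alpha_i$ being automatically $\mathbb{R}$-linearly independent), that the $E_{F,+}$-actions match, and that $\Delta/E_{F,+}\cong H/\Lambda$ is compact.

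For the combinatorial construction I would triangulate a closed fundamental parallelepiped of $\Lambda$ into finitely many Euclidean simplices and translate by $\Lambda$ to obtain a $\Lambda$-periodic, locally finite simplicial tiling of $H$, transported to $\Delta$ through the homeomorphism $H\cong\Delta$. Its vertex set is discrete and $E_{F,+}$-stable; since $\{\bar\alpha\mid\alpha\in\mathcal{O}_{F,+}\}$ is dense in $\Delta$ (because the rays through the full lattice $\mathcal{O}_F$ are dense among all rays in the positive orthant), I can move each of the finitely many $E_{F,+}$-orbit representatives among the vertices a little, to a point $\bar\alpha$ with $\alpha\in\mathcal{O}_{F,+}$, and extend the perturbation $E_{F,+}$-equivariantly. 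Replacing the transported cells by the straight simplices in $\Delta$ on the perturbed vertices with the same incidence structure gives, for a small enough perturbation, again a genuine $E_{F,+}$-periodic simplicial tiling $\mathcal{T}$ of $\Delta$, each relatively open cell of which is the cross-section of some $C(\alpha_1,\dots,\alpha_r)$ with $\alpha_i\in\mathcal{O}_{F,+}$. Finally, the standard ``half-open'' selection for a periodic simplicial complex with compact quotient picks out finitely many cells $\{e_j\}_{j\in J}$ of $\mathcal{T}$ forming an \emph{exact} fundamental domain for $E_{F,+}\curvearrowright\Delta$; writing $e_j$ as the cross-section of $C(v_{j1},\dots,v_{jr(j)})$ with $v_{ji}\in\mathcal{O}_{F,+}$ and setting $D:=\coprod_{j\in J}C(v_{j1},\dots,v_{jr(j)})$ yields $F\otimes\mathbb{R}_+=\coprod_{u\in E_{F,+}}uD$, which is the assertion.

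\textbf{The main obstacle} is the equivariant perturbation step: one must move the infinitely many vertices to ``rational directions'' $\bar\alpha$ simultaneously and $E_{F,+}$-compatibly, by so small an amount that straightening the cells keeps them an embedded simplicial complex with no gaps or overlaps; one checks this on a single period -- legitimate by $E_{F,+}$-equivariance and compactness of $\Delta/E_{F,+}$ -- reducing it to the elementary fact that small perturbations of the vertices of a finite geometric simplicial complex preserve its combinatorial type. A more routine but unavoidable issue is the bookkeeping that upgrades the periodic tiling to an exact fundamental domain, which is exactly what forces the lower-dimensional cones $C(v_{j1},\dots,v_{jr(j)})$ with $r(j)<n$ into the decomposition. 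Everything else -- Dirichlet's unit theorem, the homeomorphisms, and the identification of cross-sections of simplicial cones with straight simplices -- is soft.
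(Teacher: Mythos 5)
You are proving a statement the paper itself does not prove: Theorem \ref{Sd} is simply quoted from Shintani \cite{Shin1}, whose own argument is of a different shape (cover a compact set of ray-representatives by small open simplicial cones with generators in $\mathcal O_{F,+}$, each injecting into the quotient by $E_{F,+}$ and positive scalings, then disjointify using the fact that finite disjoint unions of rational simplicial cones are stable under intersections and differences). Your reduction steps are sound: Dirichlet's theorem and torsion-freeness of $E_{F,+}$, the log linearisation, freeness of the action, compactness of $\Delta/E_{F,+}$, density of the rational directions $\bar\alpha$, the equivalence of affine independence in the cross-section hyperplane with linear independence of the $\alpha_i$, and the half-open (open-cell) selection from a periodic tiling all work.

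The gap is the central geometric step, which as written would fail. The cells you transport from $H$ to $\Delta$ are curvilinear images under a nonlinear homeomorphism, and your construction never subdivides: a triangulated fundamental parallelepiped of $\Lambda$ has cells of diameter comparable to the covering radius of $\Lambda$, and for cells of that size replacing the curved cell by the straight simplex on its vertices is not a small perturbation at all, so the straightened family can overlap and leave gaps in $\Delta$ no matter how small you make the vertex perturbation. Moreover the ``elementary fact'' you invoke (small perturbations of the vertices of a finite geometric complex preserve its combinatorial type) is not the statement you need, and is not true without hypotheses; what must be proved is that the perturbed, straightened, $E_{F,+}$-periodic family of simplices still covers $\Delta$ with pairwise disjoint relatively open cells. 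That requires a genuine argument: first refine the periodic triangulation of $H$ with uniformly bounded simplex thickness (ordinary barycentric subdivision degrades shapes, so one needs e.g.\ an edgewise-type subdivision) until the map $H\to\Delta$ is $C^1$-close to affine on each cell, then run a properness/degree-one argument for the induced piecewise-linear map on one compact period, equivariantly; one must also arrange the triangulation of the parallelepiped so that opposite facets match under $\Lambda$-translation, or the translates do not form a simplicial complex in the first place. With these repairs your strategy can be completed, and it is a legitimately different route from Shintani's disjointification argument, but in the proposal the decisive step is asserted rather than proved.
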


\section{The class invariants $X(c,\iota)$, $X_p(c,\iota)$}

In this section, we summarize the definitions and some properties of $X(c,\iota)$ and $X_p(c,\iota)$.
The class invariant $X(c)$ was introduced and studied by Yoshida \cite{Yo} ($X(c,\iota)$ in this paper is almost equal to $X(\iota(c))$ in \cite{Yo}).
Yoshida and the author studied the $p$-adic analogue $X_p(c)$ in \cite{KY1,KY2}.
Throughout this section let $F$ be a totally real field of degree $n$, $\mathfrak f$ an integral ideal of $F$, 
$c \in C_\mathfrak f$, $\iota \in \HFR$, and $D=\coprod_{j \in J}C(\bm v_j)$ a disjoint union of cones with $\bm v_j \in \mathcal O_F^{r(j)}$.
Unless otherwise specified, we do not assume that $D$ is a Shintani domain in the sense of Theorem \ref{Sd}.
Let $\pi \colon C_\mathfrak f \ra C_{(1)}$ be the natural projection.
We take an integral ideal $\mathfrak a_c$ satisfying $\mathfrak a_c \mathfrak f \in \pi(c)$ for each $c \in C_\mathfrak f$.

\subsection{The case of a totally positive $D$}

In this subsection, we assume that $D=\coprod_{j \in J}C(\bm v_j)$ ($\bm v_j \in \mathcal O_F^{r(j)}$) is totally positive, that is
\begin{align*}
D \subset F\otimes \mathbb R_+, \text{ or equivalently, } \bm v_j \in \mathcal O_{F,+}^{r(j)}.
\end{align*}
We introduce Yoshida's invariants $G,W,V,X$ in \cite[Chap.\ III, (3.6)--(3.9), (3.28)--(3.31)]{Yo}, which is slightly modified in \cite[(4.3)]{KY1}, \cite[\S 2]{Ka3}.
The equalities in Definitions \ref{G}, \ref{W} follow from \cite[Chap.\ II, Lemma 3.2]{Yo}.

\begin{dfn} \label{R}
Let $\bm v=(v_1,\dots,v_r) \in \mathcal O_{F,+}^r$ be linearly independent. Then we put
\begin{align*}
R(c,\bm v):=R(c,\bm v,\mathfrak a_c):=\left\{\bm x \in ((0,1] \cap \mathbb Q)^r \mid 
\mathcal O_F \supset (\bm x\ttt \bm v)\mathfrak a_c\mathfrak f \in c\right\}.
\end{align*}
\end{dfn}

\begin{dfn} \label{G}
We put 
\begin{align*}
G(c,\iota,D,\mathfrak a_c)
:=\left[\frac{d}{ds}\sum_{z \in ({\mathfrak a}_c \mathfrak f)^{-1} \cap D,\,(z){\mathfrak a}_c \mathfrak f \in c} \iota(z)^{-s}\right]_{s=0}
=\sum_{j \in J}\sum_{\bm x \in R(c,\bm v_j)}\log \Gamma(\iota(\bm x\ttt\bm v_j),\iota(\bm v_j)).
\end{align*}
\end{dfn}

\begin{dfn} \label{logiota}
We define group homomorphisms $\log_\iota \colon I_F \ra \mathbb R$ for $\iota \in \HFR$ as follows:
For each prime ideal $\mathfrak p$ of $F$, we choose a generator $\pi_\mathfrak p \in \mathcal O_{F,+}$ 
of the principal ideal $\mathfrak p^{h_{F,+}}$, where $h_{F,+}=|C_{(1)}|$ is the narrow class number.
Then we put
\begin{align*}
\log_\iota \mathfrak p:=\frac{1}{h_{F,+}} \log \iota(\pi_\mathfrak p).
\end{align*}
We extend this linearly to $\log_\iota \colon I_F \ra \mathbb R$.
\end{dfn}

We consider a principal ideal $(\alpha)=\alpha\mathcal O_F$ with $\alpha \in F^\times$.
The difference between $\log \iota(\alpha)$ and $\log_{\iota} (\alpha)$ is as follows:
By definition there exists a generator $\alpha' \in (\alpha)^{h_{F,+}}$ satisfying $\log_{\iota} (\alpha)=\frac{1}{h_{F,+}}\log \iota(\alpha')$.
Then we see that 
\begin{align} \label{ea}
\log \iota(\alpha)-\log_{\iota} (\alpha)=\frac{1}{h_{F,+}}\log \iota(u_\alpha)
\quad  (u_\alpha:=\frac{\alpha^{h_{F,+}}}{\alpha'} \in E_{F,+}).
\end{align}

\begin{dfn} \label{W}
We put 
\begin{align*}
W(c,\iota,D,\mathfrak a_c)
&:=-\left[\sum_{z \in ({\mathfrak a}_c \mathfrak f)^{-1} \cap D,\,(z){\mathfrak a}_c \mathfrak f \in c} \iota(z)^{-s}\right]_{s=0} 
\log_\iota \mathfrak a_c\mathfrak f \\
&=-\left(\sum_{j \in J}\sum_{\bm x \in R(c,\bm v_j)}\zeta(0,\iota(\bm v_j),\iota(\bm x\ttt\bm v_j))\right) \log_\iota \mathfrak a_c\mathfrak f.
\end{align*}
\end{dfn}

\begin{dfn} \label{V}
For $\iota,\iota' \in \HFR$ with $\iota\neq \iota'$, we put 
\begin{align*}
v_{\iota,\iota'}:=\left[\frac{d}{ds}\sum_{z \in ({\mathfrak a}_c \mathfrak f)^{-1} \cap D,\,(z){\mathfrak a}_c \mathfrak f \in c} 
\left((\iota(z)\iota'(z))^{-s} -\iota(z)^{-s}-\iota'(z)^{-s} \right)\right]_{s=0}.
\end{align*}
Then we define 
\begin{align*}
V(c,\iota,D,\mathfrak a_c)&:=\frac{2}{n}\sum_{\iota' \neq \iota} v_{\iota,\iota'} -\frac{2}{n^2}\sum_{\iota'\neq \iota''} v_{\iota',\iota''}.
\end{align*}
Here $\iota'$ runs over all $\iota' \in \HFR$ with $\iota'\neq \iota$ in the first sum,
$\iota',\iota''$ run over all $\iota',\iota'' \in \HFR$ with $\iota'\neq \iota''$ in the second sum.
\end{dfn}

\begin{dfn} \label{X}
We put 
\begin{align*}
X(c,\iota,D,\mathfrak a_c):=G(c,\iota,D,\mathfrak a_c)+W(c,\iota,D,\mathfrak a_c)+V(c,\iota,D,\mathfrak a_c).
\end{align*}
If $D$ is a Shintani domain in the sense of Theorem \ref{Sd}, and if we fix $D,\mathfrak a_c$, then we put 
\begin{align*}
X(c,\iota):=X(c,\iota,D,\mathfrak a_c),\ G(c,\iota):=G(c,\iota,D,\mathfrak a_c).
\end{align*}
\end{dfn}

The following Theorem is essentially due to Yoshida:
Yoshida transformed Shintani's formula (\ref{Sf}) in \cite{Shin2} into a similar form.
Yoshida and the author modified the $W$-term slightly in \cite{KY1}, \cite{Ka3} into the present form.

\begin{thm}[{\cite[Chap.\ III, (3.11)]{Yo}, \cite[Theorem 2.5]{Ka3}}] \label{SYf}
Let $D$ be a Shintani domain. Then we have
\begin{align*}
\zeta'(0,c)=\sum_{\iota \in \HFR}X(c,\iota).
\end{align*}
\end{thm}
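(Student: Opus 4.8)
The plan is to reduce the statement to Shintani's original formula (\ref{Sf}) together with Yoshida's identification of the ``correction terms.'' First I would fix a Shintani domain $D=\coprod_{j\in J}C(\bm v_j)$ and the auxiliary ideals $\mathfrak a_c$, and write out the starting point: by Shintani \cite{Shin2} (or directly from the integral representation of $\zeta(s,c)$ via the decomposition of $(\mathfrak a_c\mathfrak f)^{-1}\cap D$ into cones), one has
\begin{align*}
\zeta'(0,c)=\sum_{\iota\in\HFR}\sum_{j\in J}\sum_{\bm x\in R(c,\bm v_j)}\log\Gamma(\iota(\bm x\ttt\bm v_j),\iota(\bm v_j))+(\text{correction terms}),
\end{align*}
where the inner double sum is exactly $G(c,\iota)$ by Definition \ref{G}. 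So it remains to show that the correction terms equal $\sum_{\iota}\big(W(c,\iota)+V(c,\iota)\big)$. This is where Yoshida's computation in \cite[Chap.\ III, (3.6)--(3.11)]{Yo} enters: the correction terms arising from Shintani's formula are a combination of values $\zeta(0,\iota(\bm v_j),\iota(\bm x\ttt\bm v_j))$ weighted by $\log$'s of norms and by the ``$\Gamma$-of-a-product'' discrepancies, and Yoshida shows these assemble into precisely the $W$- and $V$-terms.

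The key steps, in order: (1) Recall the cone decomposition $(\mathfrak a_c\mathfrak f)^{-1}\cap D=\coprod_j\{(\bm x\ttt\bm v_j): \bm x\in R(c,\bm v_j)\}+\text{(lattice translates)}$, so that $\zeta(s,c)=N(\mathfrak a_c\mathfrak f)^{s}\sum_{j}\sum_{\bm x\in R(c,\bm v_j)}\zeta(s,\bm v_j,\bm x\ttt\bm v_j)$ after applying the diagonal embedding and using $N\mathfrak a=\prod_\iota\iota(\alpha)$ for principal pieces. (2) Differentiate at $s=0$: the product rule produces $G(c,\iota)$ from differentiating the multiple zeta functions (by Definition \ref{mgf}), plus a term $\log N(\mathfrak a_c\mathfrak f)\cdot\sum_{j,\bm x}\zeta(0,\bm v_j,\bm x\ttt\bm v_j)$ from differentiating $N(\mathfrak a_c\mathfrak f)^s$. (3) Replace $\log N(\mathfrak a_c\mathfrak f)=\sum_\iota\log\iota(\cdot)$ and compare with $\sum_\iota\log_\iota\mathfrak a_c\mathfrak f$: the discrepancy is controlled by (\ref{ea}), i.e.\ by units $u_\alpha\in E_{F,+}$, and since $D$ is a \emph{fundamental domain} for $E_{F,+}$ these unit contributions telescope/cancel when summed against the $E_{F,+}$-invariance of the zeta data — this is what forces the appearance of $W(c,\iota)$ rather than a naive norm term. (4) The remaining ``cross terms'' in Shintani's formula, involving $\iota(z)\iota'(z)$ for pairs of embeddings, are exactly the $v_{\iota,\iota'}$ of Definition \ref{V}; Yoshida's bookkeeping (the $\frac2n$, $\frac{2}{n^2}$ normalization) is designed so that $\sum_\iota V(c,\iota)$ reproduces the total cross-term contribution. (5) Assemble: $\zeta'(0,c)=\sum_\iota\big(G+W+V\big)(c,\iota)=\sum_\iota X(c,\iota)$ by Definition \ref{X}.

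The main obstacle is step (3)–(4): checking that the correction terms in Shintani's formula, which a priori depend on the choice of $D$ and the cone data in a complicated way, really collapse to the clean $E_{F,+}$-equivariant expressions $W+V$. The delicate point is the interplay between the narrow-class-number normalization hidden in $\log_\iota$ (Definition \ref{logiota}) and the unit ambiguities $u_\alpha$ from (\ref{ea}): one must verify that summing $\sum_{j,\bm x}\zeta(0,\bm v_j,\bm x\ttt\bm v_j)\log\iota(u_\alpha)$ over the relevant data vanishes, using that $\sum_{\bm x\in R(c,\bm v_j)}\zeta(0,\bm v_j,\bm x\ttt\bm v_j)$ is (up to known rational factors) an $E_{F,+}$-independent quantity tied to $\zeta_R(0,c)$. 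Fortunately this is precisely the content of \cite[Chap.\ III, (3.11)]{Yo} (with the modified $W$-term verified in \cite[Theorem 2.5]{Ka3}), so in the write-up I would cite those sources for the verification rather than reproduce the full cone-by-cone computation, and instead emphasize steps (1), (2), and (5) which are short.
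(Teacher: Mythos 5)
Your proposal follows essentially the same route as the paper, which in fact offers no independent proof of Theorem \ref{SYf} but, exactly as you do, attributes it to Yoshida's transformation of Shintani's formula (\ref{Sf}) in \cite[Chap.\ III, (3.6)--(3.11)]{Yo} together with the verification of the modified $W$-term in \cite[Theorem 2.5]{Ka3}, so deferring the cone-by-cone bookkeeping to those sources is precisely what the paper does. One caveat for the write-up: your step (1) display should read $\zeta(s,c)=N(\mathfrak a_c\mathfrak f)^{-s}\sum_{j}\sum_{\bm x\in R(c,\bm v_j)}\sum_{\bm m\in\nni^{r(j)}}\prod_{\iota\in\HFR}\iota(\bm x\ttt\bm v_j+\bm m\ttt\bm v_j)^{-s}$ (the norm-form Shintani zeta, with exponent $-s$ in this paper's normalization), not a sum of one-embedding Barnes zeta functions; the passage from the derivative of this norm-form series at $s=0$ to the $\iota$-indexed Barnes $\log\Gamma$ terms plus the $W$- and $V$-corrections is exactly the nontrivial content supplied by the cited computations, not a formal product-rule step.
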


The following Lemma also is essentially due to Yoshida, 
although $\exp(X(\iota(c)))$ in \cite{Yo} was well-defined up to $\iota(F_+)^\mathbb Q$.

\begin{lmm}[{\cite[Chap.\ III, \S 3.6, 3.7]{Yo}, \cite[Lemma 3.11]{Ka3}}] \label{123}
We consider the following operations on $D,\mathfrak a_c$.
\begin{enumerate}
\item[\I] Let $j_0 \in J$. We decompose $C(\bm v_{j_0})=\coprod_{l \in L}C(\bm v_l)$ and replace
\begin{align*}
D=\coprod_{j \in J}C(\bm v_j) \R D=\left(\coprod_{j \in J-\{j_0\}}C(\bm v_j)\right) \coprod \left(\coprod_{l \in L}C(\bm v_l)\right).
\end{align*}
Note that a replacement of basis $(|L|=1)$ is included in this case.
\item[\II] Let $j_0 \in J$, $\epsilon \in E_{F,+}$. We replace $C(\bm v_{j_0})$ by $C(\epsilon \bm v_{j_0})$, that is
\begin{align*}
D=\coprod_{j \in J}C(\bm v_j) \R D=\left(\coprod_{j \in J-\{j_0\}}C(\bm v_j)\right) \coprod C(\epsilon \bm v_{j_0}).
\end{align*}
\item[\III] Let $\alpha \in \mathfrak a_{c}^{-1} \cap F_+$.  We replace 
\begin{align*}
\mathfrak a_c &\R \alpha \mathfrak a_c, \\
C(\bm v_j) &\R \alpha^{-1}C(\bm v_j)=C(\bm v_j')
\end{align*}
simultaneously. 
Here we take $\bm v_j'$ satisfying $\alpha^{-1}C(\bm v_j)=C(\bm v_j')$, $\bm v_j' \in \mathcal O_{F,+}^{r(j)}$ for each $j \in J$.
\end{enumerate}
In order to simplify the following expressions, we put
\begin{align*}
Z_j :=\sum_{\bm x \in R(c,\bm v_j)}\zeta_{\mathrm{fml}}(0,\bm v_j,\bm x \ttt \bm v_j) \in F \quad (j \in J).
\end{align*}
In particular
\begin{align*}
\iota(Z_j)&=\sum_{\bm x \in R(c,\bm v_j)}\zeta(0,\iota(\bm v_j),\iota(\bm x \ttt \bm v_j)) \in \iota(F), \\
\mathrm{Tr}_{F/\mathbb Q}Z_j&=\sum_{\bm x \in R(c,\bm v_j)}\sum_{\iota \in \HFR}\zeta(0,\iota(\bm v_j),\iota(\bm x \ttt \bm v_j)) \in \mathbb Q
\end{align*}
have meanings.
\begin{enumerate}
\item $G(c,\iota,D, \mathfrak a_c)$ changes as follows by the operations \I, \II, \III.
\begin{enumerate}
\item[\I] Stays constant.
\item[\II] $\displaystyle G(c,\iota,D,\mathfrak a_c)-\iota(Z_{j_0})\log\iota(\epsilon)$.
\item[\III] $\displaystyle G(c,\iota,D,\mathfrak a_c)+\sum_{j\in J}\iota(Z_j)\log\iota(\alpha)$.
\end{enumerate}
\item $W(c,\iota,D, \mathfrak a_c)$ changes as follows by the operations \I, \II, \III.
\begin{enumerate}
\item[\I] Stays constant.
\item[\II] Stays constant.
\item[\III] $\displaystyle W(c,\iota,D,\mathfrak a_c)-\sum_{j\in J}\iota(Z_j)\log_{\iota}(\alpha)$.
\end{enumerate}
\item $V(c,\iota,D, \mathfrak a_c)$ changes as follows by the operations \I, \II, \III.
\begin{enumerate}
\item[\I] Stays constant.
\item[\II] $\displaystyle V(c,\iota,D,\mathfrak a_c) + \left(\iota(Z_{j_0})-\frac{\mathrm{Tr}_{F/\mathbb Q}(Z_{j_0})}{n}\right)\log\iota(\epsilon)$.
\item[\III] $\displaystyle V(c,\iota,D,\mathfrak a_c) +\sum_{j \in J} \left(\iota(Z_j)-\frac{\mathrm{Tr}_{F/\mathbb Q}(Z_j)}{n}\right)
\left(\frac{1}{n}\log (N_{F/\mathbb Q}(\alpha)) -\log\iota(\alpha)\right)$.
\end{enumerate}
\item Additionally assume that $D$ is a Shintani domain. Then $X(c,\iota,D, \mathfrak a_c)$ changes as follows by the operations \I, \II, \III.
\begin{enumerate}
\item[\I] Stays constant.
\item[\II] $\displaystyle X(c,\iota,D,\mathfrak a_c) -\frac{\mathrm{Tr}_{F/\mathbb Q}(Z_j)}{n}\log\iota(\epsilon)$.
\item[\III] $\displaystyle X(c,\iota,D,\mathfrak a_c)-\frac{\zeta(0,c)}{h_{F,+}}\log \iota(u_\alpha)$.
Here we take $u_\alpha \in E_{F,+}$ is as in (\ref{ea}).
\end{enumerate}
In particular, $\exp(X(c,\iota)) \bmod \iota(E_{F,+})^\mathbb Q$ does not depend on $D$, $\mathfrak a_c$, 
where we put $\iota(E_{F,+})^\mathbb Q:=\{\iota(u)^{\frac{1}{N}} \mid u \in E_{F,+},\ N \in \mathbb N\}$.
More precisely, for Shintani domains $D,D'$, 
integral ideals $\mathfrak a_c,\mathfrak a_c'$ satisfying $\mathfrak a_c\mathfrak f,\mathfrak a_c'\mathfrak f \in \pi(c)$, 
there exist $N \in \mathbb N$, $u_{c,D,D',\mathfrak a_c,\mathfrak a_c'} \in E_{F,+}$ satisfying 
\begin{align*}
X(c,\iota,D',\mathfrak a_c')-X(c,\iota,D,\mathfrak a_c)=\frac{1}{N}\log \iota(u_{c,D,D',\mathfrak a_c,\mathfrak a_c'}).
\end{align*}
\end{enumerate}
\end{lmm}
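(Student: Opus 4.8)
The plan is to track the effect of each of the operations \I, \II, \III on the three ingredients $G$, $W$, $V$ of $X$ separately, and then add up. For this I use both descriptions of $G$ and $W$ from Definitions \ref{G}, \ref{W}: the one as a derivative (resp.\ value) at $s=0$ of the Dirichlet series $\sum_{z}\iota(z)^{-s}$ over the lattice set $L(c,D,\mathfrak a_c):=\{z\in(\mathfrak a_c\mathfrak f)^{-1}\cap D\mid (z)\mathfrak a_c\mathfrak f\in c\}$, and the one as a finite sum over the sets $R(c,\bm v_j)$; for $V$ one works in the same way with the ``cross'' series $\sum_z(\iota(z)\iota'(z))^{-s}$. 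The basic inputs are the scaling laws of Barnes' multiple zeta and gamma functions (Proposition \ref{prpofmg}-(i)) and the additivity of $\log\Gamma$, of $\zeta(0,\cdot,\cdot)$, and of the cross-terms over a subdivision of a cone (\cite[Chap.\ II, Lemma 3.2]{Yo}). Throughout I follow Yoshida \cite[Chap.\ III, \S 3.6, 3.7]{Yo}, inserting the modification of the $W$-term from \cite[(4.3)]{KY1}, \cite[\S 2]{Ka3}.

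Operation \I does not change $L(c,D,\mathfrak a_c)$, and by the additivity just quoted neither does the sum over $R$-sets; hence $G$, $W$, $V$, $X$ are constant under \I (the lower-dimensional faces of the subcones are absorbed by the usual Shintani conventions). For operation \II one notes that $\epsilon\in E_{F,+}$ gives $R(c,\epsilon\bm v_{j_0})=R(c,\bm v_{j_0})$ and replaces each $z\in L(c,D,\mathfrak a_c)$ lying in the $j_0$-cone by $\epsilon z$; applying $\Gamma(\iota(\epsilon)z,\iota(\epsilon)\bm v)=\Gamma(z,\bm v)\iota(\epsilon)^{-\zeta(0,\bm v,z)}$ and $\zeta(0,\iota(\epsilon)\bm v,\iota(\epsilon)z)=\zeta(0,\bm v,z)$ termwise gives the change $-\iota(Z_{j_0})\log\iota(\epsilon)$ for $G$ and $0$ for $W$. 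For operation \III one uses the bijection $z\mapsto\alpha^{-1}z$ from the new lattice set onto $L(c,D,\mathfrak a_c)$; since $\iota(\alpha^{-1}z)^{-s}=\iota(\alpha)^s\iota(z)^{-s}$, pulling the constant $\iota(\alpha)^s$ out of the sum and differentiating at $s=0$ gives the change $+\sum_j\iota(Z_j)\log\iota(\alpha)$ for $G$, while $\mathfrak a_c\mathfrak f\mapsto(\alpha)\mathfrak a_c\mathfrak f$ together with the $\zeta(0,\cdot)$-invariance gives the change $-\sum_j\iota(Z_j)\log_\iota(\alpha)$ for $W$ (one uses here that $\iota(Z_j)=[\sum_{z\text{ in }j\text{-cone}}\iota(z)^{-s}]_{s=0}$ is unaffected by rescaling the cones).

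The changes of $V$ are obtained by the identical manipulations applied to $\sum_z(\iota(z)\iota'(z))^{-s}$, using its scaling by $(\iota(\epsilon)\iota'(\epsilon))^{-s}$ (resp.\ $(\iota(\alpha)\iota'(\alpha))^{s}$) under \II (resp.\ \III) and the value at $s=0$ of $\sum_{\bm m}\bigl(\iota(z_0+\bm m\ttt\bm v)\,\iota'(z_0+\bm m\ttt\bm v)\bigr)^{-s}$, which is computed by Shintani's method (cf.\ \cite[Chap.\ I]{Yo}) and is symmetric in $\iota,\iota'$; carrying out the weighted sums in Definition \ref{V} and using $\sum_{\iota}\log\iota(\epsilon)=\log N_{F/\mathbb Q}(\epsilon)=0$ (resp.\ $\sum_\iota\log\iota(\alpha)=\log N_{F/\mathbb Q}(\alpha)$) produces the displayed formulas, in which the combination $\iota(Z_j)-\tfrac1n\mathrm{Tr}_{F/\mathbb Q}(Z_j)$ appears precisely because $V$ is designed to symmetrize the $Z_j$-coefficient across the embeddings. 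Adding $G+W+V$: under \II the $\iota(Z_{j_0})\log\iota(\epsilon)$-terms collapse to $-\tfrac1n\mathrm{Tr}_{F/\mathbb Q}(Z_{j_0})\log\iota(\epsilon)$; under \III the $\log\iota(\alpha)$- and $\log N_{F/\mathbb Q}(\alpha)$-contributions cancel once one uses that, for a Shintani domain, $\sum_j\iota(Z_j)=[\sum_z\iota(z)^{-s}]_{s=0}$ is independent of $\iota$ and equals $\zeta(0,c)$ (a consequence of Shintani's expression for $\zeta(s,c)$ and the rationality of $\zeta(0,c)$), so that the change of $X$ reduces, via (\ref{ea}), to the stated rational multiple $-\tfrac{\zeta(0,c)}{h_{F,+}}\log\iota(u_\alpha)$ of $\log\iota(u_\alpha)$. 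I expect this \II/\III computation of $V$ — the explicit value at $s=0$ of the cross-series and the verification that the weights $\tfrac2n$, $-\tfrac2{n^2}$ collapse the outcome to the ``$\iota(Z_j)-$average'' form — to be the part carrying the real analytic content.

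For the ``in particular'' statement the point is that any two admissible pairs $(D,\mathfrak a_c)$, $(D',\mathfrak a_c')$ are linked by a finite chain of \I, \II, \III. Since the $\bm v_j$ are totally positive, each $\overline{C(\bm v_j)}\cap\{N_{F/\mathbb Q}=1\}$ is compact, so $D\cap\{N_{F/\mathbb Q}=1\}$ is relatively compact; as $E_{F,+}$ acts properly discontinuously and cocompactly on $\{N_{F/\mathbb Q}=1,\ \gg 0\}$, only finitely many $u\in E_{F,+}$ satisfy $uD\cap D'\neq\emptyset$, and for each such $u$ the set $D\cap u^{-1}D'$ is a finite union of rational simplicial cones. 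Subdividing $D$ into these pieces (operation \I), translating the pieces indexed by $u$ by $u$ (operation \II) to reassemble $D'=\coprod_u(uD\cap D')$, and then applying operation \III with $\alpha$ a totally positive generator of the integral ideal $\mathfrak a_c'\mathfrak a_c^{-1}$ (with further \I, \II to put the cones into the prescribed shape, and with the lower-dimensional faces handled by Shintani's conventions), one passes from $(D,\mathfrak a_c)$ to $(D',\mathfrak a_c')$. Each step changes $X(c,\iota,\cdot,\cdot)$ by $\tfrac1{N_i}\log\iota(u_i)$ with $u_i\in E_{F,+}$ — clear the denominator of the rational number $\tfrac1n\mathrm{Tr}_{F/\mathbb Q}(Z_{j_0})$ or $\tfrac{\zeta(0,c)}{h_{F,+}}$ and use that $E_{F,+}$ is a group — so composing finitely many such, with $N:=\prod_iN_i$ and $u_{c,D,D',\mathfrak a_c,\mathfrak a_c'}:=\prod_i u_i^{N/N_i}\in E_{F,+}$, gives $X(c,\iota,D',\mathfrak a_c')-X(c,\iota,D,\mathfrak a_c)=\tfrac1N\log\iota(u_{c,D,D',\mathfrak a_c,\mathfrak a_c'})$, whence $\exp(X(c,\iota))\bmod\iota(E_{F,+})^{\mathbb Q}$ is well defined. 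This connectivity step is classical but the one place where some care with the geometry of open cones and their faces is unavoidable.
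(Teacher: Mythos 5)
Your overall plan --- tracking $G$, $W$, $V$ separately under \I, \II, \III\ via the lattice-set description and the scaling laws, then adding up and finishing with a chain/connectivity argument for the ``in particular'' clause --- is exactly the route the paper (following \cite[Chap.\ III, \S 3.6, 3.7]{Yo} and \cite[Lemma 3.11]{Ka3}) takes; the paper itself only writes out case (i)-\II\ via (\ref{eqofsets}) and defers the rest. Your computations for $G$ and $W$ under all three operations are correct, and your argument that any two admissible pairs $(D,\mathfrak a_c)$, $(D',\mathfrak a_c')$ are linked by finitely many operations (finitely many units $u$ with $uD\cap D'\neq\emptyset$, common refinement by \I, translation by \II, one application of \III\ with a totally positive generator of $\mathfrak a_c'\mathfrak a_c^{-1}$) is sound.

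The gap is in the $V$-part, which you yourself flag as ``expected'' rather than proved, and the property you invoke there is not the one that makes the computation work. Under \II\ the change of $v_{\iota,\iota'}$ is $-\log(\iota(\epsilon)\iota'(\epsilon))A_{\iota\iota'}+\log\iota(\epsilon)\,\iota(Z_{j_0})+\log\iota'(\epsilon)\,\iota'(Z_{j_0})$, where $A_{\iota\iota'}$ denotes the value at $s=0$ of the cross series $\sum_z(\iota(z)\iota'(z))^{-s}$ over the $j_0$-cone; symmetry of $A_{\iota\iota'}$ together with $\sum_\iota\log\iota(\epsilon)=0$ does \emph{not} remove the $A$-terms from the weighted combination in Definition \ref{V}. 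What is actually needed is the precise evaluation $A_{\iota\iota'}=\tfrac12\bigl(\iota(Z_{j_0})+\iota'(Z_{j_0})\bigr)$ (a Yoshida Chap.\ II, Lemma 3.2 type computation, the same source the paper quotes for Definitions \ref{G}, \ref{W}); only after substituting it does one get $\Delta v_{\iota,\iota'}=\tfrac12(\log\iota(\epsilon)-\log\iota'(\epsilon))(\iota(Z_{j_0})-\iota'(Z_{j_0}))$, after which the weights collapse to the stated formulas (and similarly for \III). You never state or establish this identity, so parts (iii)-\II, (iii)-\III\ are not actually derived. The same issue recurs in (iv)-\III: you assert that $\sum_j\iota(Z_j)$ is independent of $\iota$ and equals $\zeta(0,c)$ ``as a consequence of Shintani's expression for $\zeta(s,c)$ and the rationality of $\zeta(0,c)$'', but Shintani's expression only yields the averaged statement $\frac1n\sum_j\mathrm{Tr}_{F/\mathbb Q}(Z_j)=\zeta(0,c)$; the $\iota$-independence over a full Shintani domain is a separate nontrivial fact (again from \cite{Yo}, using that $D$ is a fundamental domain), and it is precisely what reduces the \III-change of $X$ to a rational multiple of $\log\iota(u_\alpha)$ via (\ref{ea}). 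So the skeleton matches the paper's, but the two analytic inputs on which (iii) and (iv) rest are missing from your argument.
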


\begin{proof}
For comparison to the $p$-adic case (Lemma \ref{123p}), we give a brief sketch of the poof of (i)-\II.
Let $D':=(\coprod_{j \in J-\{j_0\}}C(\bm v_j)) \coprod C(\epsilon \bm v_{j_0})$.
By definition we have
\begin{align*}
&G(c,\iota,D',\mathfrak a_c)-G(c,\iota,D,\mathfrak a_c) \\
&=\left[\frac{d}{ds}\sum_{z \in ({\mathfrak a}_c \mathfrak f)^{-1} \cap C(\epsilon \bm v_{j_0}),\,(z){\mathfrak a}_c \mathfrak f \in c} \iota(z)^{-s}\right]_{s=0}
-\left[\frac{d}{ds}\sum_{z \in ({\mathfrak a}_c \mathfrak f)^{-1} \cap C(\bm v_{j_0}),\,(z){\mathfrak a}_c \mathfrak f \in c} \iota(z)^{-s}\right]_{s=0}.
\end{align*}
By noting that  
\begin{align} \label{eqofsets}
\{z \in ({\mathfrak a}_c \mathfrak f)^{-1} \cap C(\epsilon \bm v_{j_0}) \mid (z){\mathfrak a}_c \mathfrak f \in c\}
=\epsilon \{z \in ({\mathfrak a}_c \mathfrak f)^{-1} \cap C(\bm v_{j_0}) \mid (z){\mathfrak a}_c \mathfrak f \in c\},
\end{align}
we can rewrite 
\begin{align*}
G(c,\iota,D',\mathfrak a_c)-G(c,\iota,D,\mathfrak a_c) 
&=\left[\frac{d}{ds}\left(\iota(\epsilon)^{-s}-1\right)
\sum_{z \in ({\mathfrak a}_c \mathfrak f)^{-1} \cap C(\bm v_{j_0}),\,(z){\mathfrak a}_c \mathfrak f \in c} \iota(z)^{-s}\right]_{s=0} \\
&=-\iota(Z_{j_0})\log\iota(\epsilon)
\end{align*}
as desired.
The other cases are similar.
\end{proof}

\subsection{The case of a non-totally positive $D$}

In this subsection, we fix an embedding $\iota_0 \in \HFR$.
We put 
\begin{align*}
F\otimes \mathbb R_{(n-1)+} :=\left\{\sum_{i=1}^k\alpha_i \otimes \beta_i \in F\otimes \mathbb R \mid 
\HFR \ni \iota \neq \iota_0 \R \sum_{i=1}^k\iota(\alpha_i) \beta_i \in \prn \right \}.
\end{align*}
For each subset $A \subset F$, we put
\begin{align*}
A_{(n-1)+}:=A \cap \left(F\otimes \mathbb R_{(n-1)+}\right)=\{a \in A \mid \HFR \ni  \iota \neq \iota_0 \R \iota(a) \in \prn\}.
\end{align*}
We introduce generalizations of Yoshida's invariants in \cite{Ka3}.

\begin{dfn} \label{ciota}
For each $\iota \in \HFR$, we take $\nu_\iota \in \mathcal O_F$ satisfying 
\begin{align*}
\nu_\iota \equiv 1 \bmod \mathfrak f,\ \iota(\mu_\iota)<0,\ \iota'(\nu_\iota)>0\ (\iota\neq \iota' \in \HFR)
\end{align*}
and put
\begin{align*}
c_{\iota} :=[(\nu_\iota)] \in C_\mathfrak f.
\end{align*}
\end{dfn}

\begin{rmk} \label{sccase}
Let $H_\mathfrak f$ be the maximal ray class field modulo $\mathfrak f$ in the narrow sense,
$\mathrm{Art}\colon C_\mathfrak f \ra \mathrm{Gal}(H_\mathfrak f/F)$ the Artin map.
Then $\mathrm{Art}(c_{\iota})$ is the complex conjugation at $\iota$ \cite[Chap.\ III, the first paragraph of \S 5.1]{Yo}.
Hence the fixed subfield $H_{\mathfrak f}^{\langle \mathrm{Art}(c_\iota) \rangle}$ is the maximal subfield where the real place $\iota$ splits completely.
\end{rmk}

\begin{dfn}
Let $\bm v=(v_1,\dots,v_r) \in \mathcal O_{F,(n-1)+}^r$ be linearly independent. 
Then we put
\begin{align*}
R(c \cup cc_{\iota_0},\bm v):=R(c\cup cc_{\iota_0},\bm v,\mathfrak a_c):=\left\{\bm x \in ((0,1] \cap \mathbb Q)^r \mid 
\mathcal O_F \supset (\bm x\ttt \bm v)\mathfrak a_c\mathfrak f \in c \cup cc_{\iota_0}\right\}.
\end{align*}
\end{dfn}

\begin{dfn} \label{G2}
Let $\iota \in \HFR$, $\neq \iota_0$.
By replacing $c$ with $c \cup cc_{\iota_0}$ in Definition \ref{G}, we put 
\begin{align*}
G(c \cup cc_{\iota_0},\iota,D,\mathfrak a_c)
&:=\left[\frac{d}{ds}\sum_{z \in ({\mathfrak a}_c \mathfrak f)^{-1} \cap D,\,(z){\mathfrak a}_c \mathfrak f \in c \cup cc_{\iota_0}} \iota(z)^{-s}\right]_{s=0} \\
&=\sum_{j \in J}\sum_{\bm x \in R(c \cup cc_{\iota_0},\bm v_j)}\log \Gamma(\iota(\bm x\ttt\bm v_j),\iota(\bm v_j)).
\end{align*}
\end{dfn}

\begin{rmk}
When $C(\bm v) \subset F\otimes \mathbb R_{(n-1)+}$, we consider the pair $c,cc_{\iota_0}$
for the following reason:
Let $z \in ({\mathfrak a}_c \mathfrak f)^{-1} \cap C(\bm v)$ satisfy $(z){\mathfrak a}_c \mathfrak f \in c$.
We write $\bm v=(v_1,\dots,v_r)$ and $z=\bm x \ttt\bm v$ with $\bm x \in \mathbb Q_{>0}^r$.
If $C(\bm v) \subset F\otimes \mathbb R_+$, then we see that 
\begin{align*}
(z+ v_i){\mathfrak a}_c \mathfrak f &\in c, \\
(z-v_i){\mathfrak a}_c \mathfrak f &\in c\ (x_i>1),
\end{align*}
by noting that $(z\pm v_i){\mathfrak a}_c \mathfrak f=(1\pm v_i/z)(z){\mathfrak a}_c \mathfrak f$. 
It follows that $\sum_{z \in ({\mathfrak a}_c \mathfrak f)^{-1} \cap C(\bm v),\,(z){\mathfrak a}_c \mathfrak f \in c} \iota(z)^{-s}
=\sum_{\bm x \in R(c,\bm v)}\zeta(s,\iota(\bm v),\iota(\bm x\ttt\bm v))$.
If $C(\bm v) \subset F\otimes \mathbb R_{(n-1)+}$, then it follows only that $(z\pm v_i){\mathfrak a}_c \mathfrak f \in c\cup cc_{\iota_0}$.
\end{rmk}

\begin{dfn} 
For each $\iota \in \HFR$ (including $\iota=\iota_0$), we put 
\begin{align*}
W(c \cup cc_{\iota_0},\iota,D,\mathfrak a_c)
:=-\left(\sum_{j \in J}\sum_{\bm x \in R(c \cup cc_{\iota_0},\bm v_j)}\zeta_{\mathrm{fml}}(0,\iota(\bm v_j),\iota(\bm x\ttt\bm v_j))\right) 
\log_\iota \mathfrak a_c\mathfrak f.
\end{align*}
We note that $\zeta_{\mathrm{fml}}(0,\iota(\bm v_j),\iota(\bm x\ttt\bm v_j))=\zeta(0,\iota(\bm v_j),\iota(\bm x\ttt\bm v_j))$
if all entries of $\iota(\bm v_j)$ are positive.
\end{dfn}

\begin{dfn} \label{V2}
\begin{enumerate}
\item Let $\bm v=(v_1,\dots,v_r) \in \mathcal O_F^r$ be linearly independent.
Let $\bm x=(x_1,\dots,x_r) \in (\mathbb Q \cap (0,1])^r$, $\iota \in \HFR$. We put
\begin{align*}
V(\bm v,\bm x,\iota)&:=
\frac{-1}{n}\sum_{\substack{\iota' \in \HFR \\ \iota'\neq \iota}} \sum_{p=1}^r
\zeta_{\mathrm{fml}}(-1,(\tfrac{\iota(v_q)}{\iota(v_p)}-\tfrac{\iota'(v_q)}{\iota'(v_p)})_{q\neq p},(x_q)_{q\neq p})
\log|\tfrac{\iota(v_p)}{\iota'(v_p)}| \\
&\qquad +\frac{1}{2n^2}\sum_{\substack{\iota',\iota'' \in \HFR \\ \iota'\neq \iota''}}
\sum_{p=1}^r \zeta_{\mathrm{fml}}(-1,(\tfrac{\iota'(v_q)}{\iota'(v_p)}-\tfrac{\iota''(v_q)}{\iota''(v_p)})_{q\neq p},(x_q)_{q\neq p})\log|\tfrac{\iota'(v_p)}{\iota''(v_p)}|.
\end{align*}
Here for an $r$-dimensional vector $(a_q)=(a_1,\dots,a_r)$, we denote by $(a_q)_{q\neq p}$ the $(r-1)$-dimensional vector $(a_1,\dots,a_{p-1},a_{p+1},\dots,a_r)$.
\item For each $\iota \in \HFR$ (including $\iota=\iota_0$), we put 
\begin{align*}
V(c \cup cc_{\iota_0},\iota,D,\mathfrak a_c):=\sum_{j \in J} \sum_{\bm x \in R(c\cup cc_{\iota_0},\bm v_j)} V(\bm v_j,\bm x,\iota).
\end{align*}
\end{enumerate}
\end{dfn}

\begin{dfn}
Let $\iota \in \HFR$, $\neq \iota_0$. We put 
\begin{align*}
X(c \cup cc_{\iota_0},\iota,D,\mathfrak a_c)
:=G(c \cup cc_{\iota_0},\iota,D,\mathfrak a_c)+W(c \cup cc_{\iota_0},\iota,D,\mathfrak a_c)+V(c \cup cc_{\iota_0},\iota,D,\mathfrak a_c).
\end{align*}
\end{dfn}

\begin{lmm}[{\cite[Lemmas 3.6, 3.7, 3.10]{Ka3}}] \label{1232}
Let $\iota \in \HFR$, $\neq \iota_0$. 
Then the same statements as in Lemma \ref{123} hold, by replacing $c$ with $c \cup cc_{\iota_0}$.
In particular we replace $Z_j$ with 
\begin{align*}
Z_j =\sum_{\bm x \in R(c \cup cc_{\iota_0},\bm v_j)}\zeta_{\mathrm{fml}}(0,\bm v_j,\bm x \ttt \bm v_j).
\end{align*}
For {\rm (iv)-\III}, we replace $\zeta(0,c)$ with
\begin{align*}
\zeta(0,c \cup cc_{\iota_0}):=
\begin{cases}
\zeta(0,c)&(c_{\iota_0}=[(1)] \text{ or }\mathcal O_{F,(n-1)+}^{\times}=\emptyset), \\
\zeta(0,c)+\zeta(0, cc_{\iota_0})&(\text{otherwise}).
\end{cases}
\end{align*}
\end{lmm}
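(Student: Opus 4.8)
The plan is to re-run the proof of Lemma~\ref{123} essentially verbatim, substituting $c\cup cc_{\iota_0}$ for $c$ throughout and $\zeta_{\mathrm{fml}}$ for $\zeta$ wherever a Barnes value at $s=0$ with a possibly non-positive $\iota_0$-component would otherwise appear. The key observation is that each structural input of that proof survives the weaker positivity hypothesis $\bm v_j\in\mathcal O_{F,(n-1)+}^{r(j)}$, as long as one stays with the fixed embedding $\iota\neq\iota_0$: for such $\iota$ every $\iota(v_{ji})$ is still a positive real, so $G(c\cup cc_{\iota_0},\iota,D,\mathfrak a_c)$ remains an honest finite sum of $\log\Gamma$-values and the homogeneity relations of Proposition~\ref{prpofmg}-(i) apply without change. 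The two facts that must be re-verified at the generality of this subsection are (a) the set identity underlying the computation, and (b) that the $V$-term is now controlled by its closed formula in Definition~\ref{V2} rather than by an integral representation that only makes sense on the totally positive range.

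For (a): in each of (I), (II), (III) one uses only that $\epsilon\in E_{F,+}$ is a unit and $\alpha\in\mathfrak a_c^{-1}\cap F_+$ is a generator, so that $(\epsilon z)\mathfrak a_c\mathfrak f=(z)\mathfrak a_c\mathfrak f$ and, after the simultaneous replacement $\mathfrak a_c\mapsto\alpha\mathfrak a_c$, $(\alpha^{-1}z')(\alpha\mathfrak a_c)\mathfrak f=(z')\mathfrak a_c\mathfrak f$; since these operations alter ideal classes only by multiplication by principal ideals generated by units, the condition ``$\in c\cup cc_{\iota_0}$'' is preserved, and the analogue of (\ref{eqofsets}) with $c$ replaced by $c\cup cc_{\iota_0}$ holds. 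Feeding this into the computation of the proof of Lemma~\ref{123}-(i) (and using additivity of $\zeta(s,\bm v,z)$ over a subdivision of a cone for (I)) yields the stated changes of $G$, now with $Z_j=\sum_{\bm x\in R(c\cup cc_{\iota_0},\bm v_j)}\zeta_{\mathrm{fml}}(0,\bm v_j,\bm x\ttt\bm v_j)\in F$. The $W$-term is easier still: by Definition~\ref{mzf}-(ii), $\zeta_{\mathrm{fml}}(0,\bm v,\bm x\ttt\bm v)$ is a fixed polynomial expression in $\bm v$ and the (unchanged) $x_i$ that is homogeneous of degree $0$ in $\bm v$, so the scalings in (II) and (III) leave each summand of $W$ fixed apart from the explicit factor $\log_\iota\mathfrak a_c\mathfrak f$, and the analysis of (I)--(III) is formally identical to the totally positive case, using (\ref{ea}) for (III).

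For (b): instead of the integral definition of Definition~\ref{V}, I would compute directly from the closed formula for $V(\bm v,\bm x,\iota)$ in Definition~\ref{V2}-(i), which is manifestly defined for any linearly independent $\bm v\in\mathcal O_F^r$ since it involves only the nonzero reals $\iota(v_q)/\iota(v_p)$, their absolute values, and values of $\zeta_{\mathrm{fml}}$. Under (I) it is additive over subdivisions by inspection. Under (II) the ratios $\iota(v_q)/\iota(v_p)$ are insensitive to $\bm v_{j_0}\mapsto\epsilon\bm v_{j_0}$, so only the factors $\log|\iota(v_p)/\iota'(v_p)|$ move, each by $\log|\iota(\epsilon)/\iota'(\epsilon)|$; collecting the inner sums over $p$ against $\sum_{\bm x}\zeta_{\mathrm{fml}}(0,\bm v_{j_0},\bm x\ttt\bm v_{j_0})$ reproduces the change $(\iota(Z_{j_0})-\mathrm{Tr}_{F/\mathbb Q}(Z_{j_0})/n)\log\iota(\epsilon)$ exactly as in Lemma~\ref{123}-(iii)-(II). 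Under (III) the same computation, now also translating the arguments of the $\zeta_{\mathrm{fml}}(-1,\cdot)$ terms and picking up $\tfrac1n\log N_{F/\mathbb Q}(\alpha)-\log\iota(\alpha)$, gives the stated change; and for $\iota\neq\iota_0$ one checks that this closed formula is the one compatible with $X=G+W+V$, so the accounting matches the totally positive case.

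Adding the three changes gives the transformation of $X(c\cup cc_{\iota_0},\iota,D,\mathfrak a_c)$: for (II) the $G$- and $V$-parts combine to $-\mathrm{Tr}_{F/\mathbb Q}(Z_j)/n\cdot\log\iota(\epsilon)$ as in Lemma~\ref{123}-(iv), and for (III), assuming $D$ is a Shintani domain, the $G$-, $W$- and $V$-parts combine via (\ref{ea}) into $-\zeta(0,c\cup cc_{\iota_0})/h_{F,+}\cdot\log\iota(u_\alpha)$, provided one identifies the total coefficient $\sum_{j\in J}\mathrm{Tr}_{F/\mathbb Q}(Z_j)$ with $\zeta(0,c\cup cc_{\iota_0})$. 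This last identification is the step I expect to be the main obstacle: over an $(n-1)$-positive Shintani domain one must decide whether summing the local zeta contributions over the classes $c$ and $cc_{\iota_0}$ recovers $\zeta(0,c)+\zeta(0,cc_{\iota_0})$ or only $\zeta(0,c)$, and the dichotomy is governed precisely by whether $c_{\iota_0}=[(1)]$ or $\mathcal O_{F,(n-1)+}^{\times}=\emptyset$ (so that the two halves are not glued) versus the generic case, in which a unit in $\mathcal O_{F,(n-1)+}^{\times}$ negative at $\iota_0$ identifies representatives of $c$ with representatives of $cc_{\iota_0}$ inside the fundamental domain. Once this count is in place, the ``in particular'' assertion about $\exp(X(c\cup cc_{\iota_0},\iota))\bmod\iota(E_{F,+})^{\mathbb Q}$ and the explicit comparison of two Shintani data follow exactly as in Lemma~\ref{123}-(iv).
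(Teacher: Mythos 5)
Note first that the paper does not actually reprove Lemma \ref{1232}: it is imported wholesale from \cite[Lemmas 3.6, 3.7, 3.10]{Ka3}, so the substance you are trying to reconstruct is the content of those cited lemmas. Your outline follows the right general strategy (rerun Lemma \ref{123} with $c$ replaced by $c\cup cc_{\iota_0}$, taking $V$ from the closed formula of Definition \ref{V2}), and the $G$- and $W$-parts are fine: the analogue of (\ref{eqofsets}), the degree-$0$ homogeneity of $\zeta_{\mathrm{fml}}(0,\cdot)$, and Proposition \ref{prpofmg}-(i) for $\iota\neq\iota_0$ do carry over. But the proposal hand-waves exactly where the cited lemmas do real work. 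For the $V$-term under \II\ and \III\ you claim that ``collecting the inner sums over $p$ against $\sum_{\bm x}\zeta_{\mathrm{fml}}(0,\bm v_{j_0},\bm x\ttt\bm v_{j_0})$'' reproduces the stated change; this collection is not formal. It requires a nontrivial Bernoulli-polynomial identity expressing $\sum_{p}\zeta_{\mathrm{fml}}\bigl(-1,(\iota(v_q)/\iota(v_p)-\iota'(v_q)/\iota'(v_p))_{q\neq p},(x_q)_{q\neq p}\bigr)$ in terms of the degree-$0$ values $\zeta_{\mathrm{fml}}(0,\iota(\bm v),\iota(\bm x\ttt\bm v))$ and $\zeta_{\mathrm{fml}}(0,\iota'(\bm v),\iota'(\bm x\ttt\bm v))$ (combined with $\sum_{\iota'}\log|\iota'(\epsilon)|=0$ for units), and you neither state nor prove it; without it the asserted transformation of $V$, and hence of $X$, is unverified. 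Likewise, invariance of the closed-form $V$ under subdivision (operation \I) is not ``by inspection'': it is precisely \cite[Lemma 3.6]{Ka3}, the same nontrivial input the present paper leans on in the proof of Proposition \ref{welldef}.

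The most serious gap is the one you yourself flag as ``the main obstacle'': the identification of the suitably normalized total coefficient built from $\mathrm{Tr}_{F/\mathbb Q}(Z_j)$ with $\zeta(0,c\cup cc_{\iota_0})$, together with the dichotomy according to whether $c_{\iota_0}=[(1)]$ or $\mathcal O_{F,(n-1)+}^{\times}=\emptyset$. You describe the correct heuristic (a unit positive away from $\iota_0$ and negative at $\iota_0$ glues representatives of $c$ to representatives of $cc_{\iota_0}$ inside the domain, while in its absence only the $c$-part occurs), but you do not carry out the counting argument, which is exactly where the case distinction in the statement of Lemma \ref{1232} (iv)-\III\ comes from and is the content of \cite[Lemma 3.10]{Ka3}. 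As written, the proposal is therefore a plausible reduction of Lemma \ref{1232} to the computations of \cite{Ka3} rather than a proof: the Bernoulli identity behind the $V$-transformations, the subdivision invariance of $V$, and the $\zeta(0,c\cup cc_{\iota_0})$ counting step all still need to be supplied.
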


\begin{lmm}[{\cite[Lemma 3.12]{Ka3}}] \label{12}
We assume that $D$ is a Shintani domain, and that $\iota\neq \iota_0$.
Then the following assertions hold.
\begin{enumerate}
\item We have
\begin{align*}
X(cc_{\iota_0}\cup c,\iota,\nu_{\iota_0}^{-1} D,(\nu_{\iota_0})\mathfrak a_c)-X(c\cup cc_{\iota_0},\iota,D,\mathfrak a_c) 
= \frac{\zeta(0,c)}{h_{F,+}} \log \iota(u_{\nu_{\iota_0}}) .
\end{align*}
Here, in the symbol $X(cc_{\iota_0}\cup c,\iota,\nu_{\iota_0}^{-1} D,(\nu_{\iota_0})\mathfrak a_c)$, the roles of $c,c_{\iota_0}$ are exchanged.
We take $u_{\nu_{\iota_0}} \in E_{F,+}$ is as in (\ref{ea}).
\item If $c_{\iota_0}=[(1)]$ or if $\mathcal O_{F,(n-1)+}^{\times}=\emptyset$, then we have 
\begin{align*}
X(c\cup cc_{\iota_0},\iota, D,\mathfrak a_c)=X(c,\iota,D,\mathfrak a_c).
\end{align*}
Otherwise, we take an element $\epsilon\in \mathcal O_{F,(n-1)+}^{\times}$. Then we have  
\begin{align*}
&X(c\cup cc_{\iota_0},\iota, D,\mathfrak a_c) \\
&=X(c,\iota,D,\mathfrak a_c)+X(cc_{\iota_0},\iota,\nu_{\iota_0}^{-1}\epsilon D,(\nu_{\iota_0})\mathfrak a_c) 
+\frac{\zeta(0,cc_{\iota_0})}{h_{F,+}}\log \iota(u_{\nu_{\iota_0}\epsilon^{-1}}).
\end{align*}
Here we take $u_{\nu_{\iota_0}\epsilon^{-1}} \in E_{F,+}$ is as in (\ref{ea}).
\end{enumerate}
\end{lmm}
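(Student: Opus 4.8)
The plan is to prove both identities by explicit substitution in the finite sums defining $G,W,V$ together with the transformation rules of Lemmas~\ref{123}--\ref{1232}. I would carry out part (ii) first, as it is the more self-contained.

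\emph{For (ii).} The case $c_{\iota_0}=[(1)]$ is trivial ($cc_{\iota_0}=c$). If $\mathcal O_{F,(n-1)+}^{\times}=\emptyset$ then $(\nu_{\iota_0})$ has no totally positive generator, so its class in $C_{(1)}$ is non-trivial; since $\mathfrak a_c\mathfrak f\in\pi(c)$ and every totally positive $z$ has trivial class in $C_{(1)}$, no $z\in D$ can satisfy $(z)\mathfrak a_c\mathfrak f\in cc_{\iota_0}$, so $R(cc_{\iota_0},\bm v_j,\mathfrak a_c)=\emptyset$ for all $j$ and the $cc_{\iota_0}$-parts of $G,W,V$ vanish, giving $X(c\cup cc_{\iota_0},\iota,D,\mathfrak a_c)=X(c,\iota,D,\mathfrak a_c)$. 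In the remaining case, a unit $\epsilon$ that is negative at $\iota_0$ and positive elsewhere makes $\nu_{\iota_0}\epsilon^{-1}$ totally positive, so $(\nu_{\iota_0})$ is principal in the narrow sense, $\mathfrak a_c\mathfrak f\in\pi(cc_{\iota_0})$, and $\mathfrak a_c$ legitimately represents $cc_{\iota_0}$. Because $D$ is totally positive, $R(c\cup cc_{\iota_0},\bm v_j)=R(c,\bm v_j)\sqcup R(cc_{\iota_0},\bm v_j)$ (Remark after Definition~\ref{G2}) and $G,W,V$ are additive over this splitting (using that on totally positive cones $\sum_j\sum_{\bm x\in R(c',\bm v_j)}V(\bm v_j,\bm x,\iota)$ agrees with $V(c',\iota,D,\mathfrak a_c)$ of Definition~\ref{V}), so $X(c\cup cc_{\iota_0},\iota,D,\mathfrak a_c)=X(c,\iota,D,\mathfrak a_c)+X(cc_{\iota_0},\iota,D,\mathfrak a_c)$ with both terms of the totally positive kind. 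Finally $\alpha:=\nu_{\iota_0}\epsilon^{-1}\in\mathfrak a_c^{-1}\cap F_+$, and the operation~\III rule of Lemma~\ref{123}(iv) applied to $X(cc_{\iota_0},\iota,D,\mathfrak a_c)$ rewrites it as $X(cc_{\iota_0},\iota,\nu_{\iota_0}^{-1}\epsilon D,(\nu_{\iota_0})\mathfrak a_c)+\frac{\zeta(0,cc_{\iota_0})}{h_{F,+}}\log\iota(u_{\nu_{\iota_0}\epsilon^{-1}})$; substituting this in gives the asserted formula.

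\emph{For (i).} I would compare the two $(n-1)+$ invariants through the substitution $z=\nu_{\iota_0}w$. Since $\nu_{\iota_0}$ is negative at $\iota_0$ and positive elsewhere, $w\mapsto z$ is a bijection $((\nu_{\iota_0})\mathfrak a_c\mathfrak f)^{-1}\cap\nu_{\iota_0}^{-1}D\to(\mathfrak a_c\mathfrak f)^{-1}\cap D$ with $(w)(\nu_{\iota_0})\mathfrak a_c\mathfrak f=(z)\mathfrak a_c\mathfrak f$, carrying the membership condition for $cc_{\iota_0}\cup c$ to that for $c\cup cc_{\iota_0}$ (equal, since $c_{\iota_0}^2=[(1)]$). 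The three component invariants then match term by term: from $\iota(w)^{-s}=\iota(\nu_{\iota_0})^{s}\iota(z)^{-s}$ (legitimate as $\iota(\nu_{\iota_0})>0$ for $\iota\neq\iota_0$) one gets a change of $G$ by $\bigl(\sum_j\iota(Z_j)\bigr)\log\iota(\nu_{\iota_0})$; the change of ideal together with additivity of $\log_\iota$ changes $W$ by $-\bigl(\sum_j\iota(Z_j)\bigr)\log_\iota((\nu_{\iota_0}))$; and, the cross-ratios $\iota''(v_q)/\iota''(v_p)$ in Definition~\ref{V2}(i) being scale-invariant, the change of $V$ comes only from the factors $\log\lvert\iota''(v_p)/\iota'''(v_p)\rvert$, each shifting by $\log\lvert\iota''(\nu_{\iota_0})\rvert-\log\lvert\iota'''(\nu_{\iota_0})\rvert$. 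Summing the three changes and using $\log\iota(\nu_{\iota_0})-\log_\iota((\nu_{\iota_0}))=\frac{1}{h_{F,+}}\log\iota(u_{\nu_{\iota_0}})$ from~(\ref{ea}) and $\sum_j\mathrm{Tr}_{F/\mathbb Q}Z_j=\zeta(0,c\cup cc_{\iota_0})$, one checks that the $V$-contribution cancels the $\zeta(0,cc_{\iota_0})$ piece (and fixes the sign), leaving $\frac{\zeta(0,c)}{h_{F,+}}\log\iota(u_{\nu_{\iota_0}})$.

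The step I expect to be the real obstacle is this last computation of the $V$-change in (i): organizing the triple sum over real places so that the shifted $\log\lvert\,\cdot\,\rvert$-factors collapse, and in particular tracking how the negativity of $\iota_0(\nu_{\iota_0})$ (hence the $\lvert\,\cdot\,\rvert$ in Definition~\ref{V2}) turns the naive totally positive answer — which would involve $\zeta(0,c\cup cc_{\iota_0})$ with a minus sign — into $+\frac{\zeta(0,c)}{h_{F,+}}\log\iota(u_{\nu_{\iota_0}})$. This is precisely the delicate part of \cite[Lemma~3.12]{Ka3}; part (ii), by contrast, needs only Lemma~\ref{123}.
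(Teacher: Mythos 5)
Your part (ii) is essentially right, and it is the easy half: once you note that $R(c\cup cc_{\iota_0},\bm v_j)$ splits as $R(c,\bm v_j)\sqcup R(cc_{\iota_0},\bm v_j)$ on a totally positive $D$, that $\mathfrak a_c\mathfrak f$ also represents $\pi(cc_{\iota_0})$ exactly when a unit with the sign pattern of $\epsilon$ exists (and that the $cc_{\iota_0}$-part is empty otherwise), the asserted identity is indeed just Lemma~\ref{123}-(iv), operation \III, applied with $\alpha=\nu_{\iota_0}\epsilon^{-1}\in\mathfrak a_c^{-1}\cap F_+$, since $\alpha^{-1}D=\nu_{\iota_0}^{-1}\epsilon D$ and $\alpha\mathfrak a_c=(\nu_{\iota_0})\mathfrak a_c$. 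Two caveats: you tacitly (and correctly) read ``$\epsilon\in\mathcal O_{F,(n-1)+}^{\times}$'' as a unit that is \emph{negative} at $\iota_0$, which is what makes $u_{\nu_{\iota_0}\epsilon^{-1}}\in E_{F,+}$ meaningful; and both your ``trivial'' first case and the additivity step rest on the identification of the explicit $V$ of Definition~\ref{V2} with the analytic $V$ of Definition~\ref{V} on totally positive cones — that compatibility is precisely what the first case of (ii) asserts in this paper's setup, and it comes from Yoshida's closed formula (imported via \cite{Yo,Ka3}), not from anything you prove.

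For part (i) there is a genuine gap, and it sits exactly where you flag it. Your $G$- and $W$-changes under $z=\nu_{\iota_0}w$ are correct and combine, via (\ref{ea}), to $\frac{1}{h_{F,+}}\bigl(\sum_j\iota(Z_j)\bigr)\log\iota(u_{\nu_{\iota_0}})$; but the entire content of (i) is the remaining $V$-computation, which you do not carry out, and the sketch as written cannot reach the stated right-hand side without a further idea. Every quantity your argument tracks — the $R$-sets and the termwise changes of $G$, $W$, $V$ under scaling by $\nu_{\iota_0}$ — depends only on the union $c\cup cc_{\iota_0}$ and on $(D,\mathfrak a_c,\nu_{\iota_0})$, hence is symmetric in the two classes, whereas the asserted answer $\frac{\zeta(0,c)}{h_{F,+}}\log\iota(u_{\nu_{\iota_0}})$ singles out $c$. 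Passing from the class-symmetric data your computation produces (the $\iota$-component $\sum_j\iota(Z_j)$ and trace-type sums over the whole $R$-sets) to the partial zeta value of the single class $c$ requires splitting the $R$-sets according to the class of $(\bm x\ttt\bm v_j)\mathfrak a_c\mathfrak f$, applying Shintani's formula to each class separately, and exploiting the negativity of $\iota_0(\nu_{\iota_0})$ through the $\log|\cdot|$ factors at $\iota_0$ in Definition~\ref{V2}; this delicate bookkeeping is the actual proof of \cite[Lemma 3.12]{Ka3}, which the present paper does not reproduce but simply cites. So your proposal establishes (ii) but, for (i), it only restates the problem and defers its crux.
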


\subsection{$p$-adic analogues}

We introduce the $p$-adic analogues of Yoshida's invariants.
The case when $D \subset F\otimes \mathbb R_+$ was studied in \cite{KY1,KY2}.
In the $p$-adic case, the Archimedean topology induced by $\iota \in \HFR$ is not so important.
Instead, we consider the prime ideal corresponding to the $p$-adic topology.

\begin{dfn} \label{piota}
Recall that we fixed embeddings $\overline{\mathbb Q} \hookrightarrow \mathbb C$, $\overline{\mathbb Q} \hookrightarrow\mathbb C_p$.
We identify 
\begin{align*}
\HFR =\HFCp
\end{align*}
and define for $\iota \in \HFR$ $(=\HFCp)$
\begin{align*}
\mfpi:=\{z \in \mathcal O_F \mid |\iota(z)|_p<1 \}.
\end{align*}
In other words, $\iota(\mfpi)$ corresponds to the $p$-adic topology on $\iota(F) \subset \mathbb C_p$.
\end{dfn}

We fix $\iota_0 \in \HFR$ and define $F\otimes \mathbb R_{(n-1)+} \subset F\otimes \mathbb R$ as in the previous subsection. 
We note that $\iota$ may be equal to $\iota_0$. 
Throughout this subsection, we assume that
\begin{align*}
\mfpi \mid \mathfrak f.
\end{align*}
Under this assumption $(\iota(\bm v_j),\iota(\bm x\ttt\bm v_j))$ with $\bm x \in R(c,\bm v_j)$ satisfy (\ref{cond}). 
Namely, the $p$-adic interpolation function $\zeta_p(s,\iota(\bm v_j),\iota(\bm x\ttt\bm v_j))$ 
of $\zeta(s,\iota(\bm v_j),\iota(\bm x\ttt\bm v_j))$ is well-defined.
Hence we may consider the $p$-adic interpolation of 
$\sum_{z \in ({\mathfrak a}_c \mathfrak f)^{-1} \cap D,\,(z){\mathfrak a}_c \mathfrak f \in c} \iota(z)^{-s}$ in Definition \ref{G}.
In the setting of Definition \ref{G2}, 
$\zeta_p(s,\iota(\bm v_j),\iota(\bm x\ttt\bm v_j))$ for $\bm x \in R(c\cup cc_{\iota_0},\bm v_j)$ are also well-defined 
whenever $\mfpi \mid \mathfrak f$, 
although $\zeta(s,\iota(\bm v_j),\iota(\bm x\ttt\bm v_j))$ with $\iota=\iota_0$ may not be well-defined.

\begin{dfn} 
When $D \subset F\otimes \mathbb R_{(n-1)+}$, we put 
\begin{align*}
G_p(c\cup cc_{\iota_0},\iota,D,\mathfrak a_c):=\sum_{j \in J}\sum_{\bm x \in R(c\cup cc_{\iota_0},\bm v_j)}L\Gamma_p(\iota(\bm x\ttt\bm v_j),\iota(\bm v_j)).
\end{align*}
When $D \subset F\otimes \mathbb R_+$, we also define $G_p(c,\iota,D,\mathfrak a_c)$ by replacing 
$R(c\cup cc_{\iota_0},\bm v_j)$ with $R(c,\bm v_j)$.
\end{dfn}

\begin{dfn}
Let $h_{F,+},\pi_\mathfrak p$ be as in Definition \ref{logiota} and we put
\begin{align*}
\log_{\iota,p} \mathfrak p:=\frac{1}{h_{F,+}} \log_p \iota(\pi_\mathfrak p)
\end{align*}
for prime ideals $\mathfrak p$ of $F$ and for $\iota \in \HFR$.
We extend this linearly to $\log_{\iota,p} \colon I_F \ra \mathbb C_p$.
When $D \subset F\otimes \mathbb R_{(n-1)+}$, we put 
\begin{align*}
W_p(c\cup cc_{\iota_0},\iota,D,\mathfrak a_c)
:=-\left(\sum_{j \in J}\sum_{\bm x \in R(c\cup cc_{\iota_0},\bm v_j)}\zeta_{\mathrm{fml}}(0,\iota(\bm v_j),\iota(\bm x\ttt\bm v_j))\right) 
\log_{\iota,p} \mathfrak a_c\mathfrak f.
\end{align*}
When $D \subset F\otimes \mathbb R_+$, we also define $W_p(c,\iota,D,\mathfrak a_c)$ similarly.
\end{dfn}

\begin{dfn}
We define $V_p(\bm v,\bm x,\iota)$ by replacing $\log |\cdots|$ in Definition \ref{V2} with $\log_p (\cdots)$.
When $D \subset F\otimes \mathbb R_{(n-1)+}$, we put 
\begin{align*}
V_p(c \cup cc_{\iota_0},\iota,D,\mathfrak a_c):=\sum_{j \in J} \sum_{\bm x \in R(c\cup cc_{\iota_0},\bm v_j)} V_p(\bm v_j,\bm x,\iota).
\end{align*}
When $D \subset F\otimes \mathbb R_+$, we also define $V_p(c,\iota,D,\mathfrak a_c)$ similarly.
\end{dfn}

\begin{dfn} 
When $D \subset F\otimes \mathbb R_{(n-1)+}$, we put 
\begin{align*}
X_p(c \cup cc_{\iota_0},\iota,D,\mathfrak a_c)
:=G_p(c \cup cc_{\iota_0},\iota,D,\mathfrak a_c)+W_p(c \cup cc_{\iota_0},\iota,D,\mathfrak a_c)+V_p(c \cup cc_{\iota_0},\iota,D,\mathfrak a_c).
\end{align*}
When $D \subset F\otimes \mathbb R_+$, we also define $X_p(c,\iota,D,\mathfrak a_c)$ similarly.
If $D$ is a Shintani domain, we put
\begin{align*}
X_p(c,\iota):=X_p(c,\iota,D,\mathfrak a_c), \ G_p(c,\iota):=G_p(c,\iota,D,\mathfrak a_c).
\end{align*}
\end{dfn}

\begin{prp} \label{welldef}
The definitions of $G_p,W_p,V_p,X_p$ doses not depend on the choice of the cone decomposition $D=\coprod_{j \in J} C(\bm v_j)$
when we fix $D$.
\end{prp}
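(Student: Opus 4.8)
The plan is to handle $G_p$, $W_p$, $V_p$ one at a time (so that the assertion for $X_p=G_p+W_p+V_p$ follows), and in each case to reduce to a single ``refinement move''. Given two cone decompositions $D=\coprod_{j}C(\bm v_j)=\coprod_k C(\bm w_k)$ of the same set $D$, cutting $D$ along all the walls spanned by subfamilies of the $\bm v_j$'s and $\bm w_k$'s and triangulating the resulting pieces (allowing lower-dimensional cones, as in Theorem \ref{Sd}) yields a common refinement reachable from either decomposition by finitely many applications of operation \I\ of Lemma \ref{123}, i.e.\ by replacing one cone $C(\bm v)$ of the decomposition by a cone decomposition $\coprod_l C(\bm w_l)$ of it. Condition (\ref{cond}) is inherited by the sub-cones: one has $\mathrm{ord}_p(\iota(z))<0$ for every $z$ that occurs, since $\mfpi\mid\mathfrak f$ together with the class condition $(z)\mathfrak a_c\mathfrak f\in c\ (\cup\, cc_{\iota_0})$ forces it, while the chosen generators of the sub-cones remain in $\mathcal O_F$. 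So it suffices to show each of $G_p,W_p,V_p$ is unchanged under this move.

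For $W_p$ this is immediate. Put $Z:=\sum_j\sum_{\bm x\in R}\zeta_{\mathrm{fml}}(0,\bm v_j,\bm x\ttt\bm v_j)\in F$, where $R=R(c,\bm v_j)$ or $R(c\cup cc_{\iota_0},\bm v_j)$; then $W_p(\cdots)=-\iota(Z)\log_{\iota,p}(\mathfrak a_c\mathfrak f)$, and $\log_{\iota,p}(\mathfrak a_c\mathfrak f)$ does not see the decomposition. It remains to check that $Z$ is intrinsic to $D$. For any $\iota'\in\HFR$ with $\iota'\neq\iota_0$ the entries of $\iota'(\bm v_j)$ are positive, so $\zeta_{\mathrm{fml}}=\zeta$ there and
\begin{align*}
\iota'(Z)=\sum_j\sum_{\bm x\in R}\zeta(0,\iota'(\bm v_j),\iota'(\bm x\ttt\bm v_j))
=\left[\sum_{z\in(\mathfrak a_c\mathfrak f)^{-1}\cap D,\ (z)\mathfrak a_c\mathfrak f\in c\,(\cup\, cc_{\iota_0})}\iota'(z)^{-s}\right]_{s=0},
\end{align*}
which manifestly depends only on $D$ (and $c,\mathfrak a_c$); since $\iota'\colon F\hookrightarrow\mathbb R$ is injective, $Z$ is intrinsic. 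This is the $\zeta$-additivity content behind Lemma \ref{123}-(ii)-\I\ and Lemma \ref{1232}.

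For $V_p$ the key observation is that the classical identity equating the $(\bm v,\bm x)$-formula of Definition \ref{V2} with the intrinsic $v_{\iota,\iota'}$-expression of Definition \ref{V} — equivalently, the argument proving invariance under \I\ in Lemma \ref{1232} (\cite[Lemma 3.10]{Ka3}) — is a purely formal manipulation that uses only the functional equation $\log|ab|=\log|a|+\log|b|$ (to split $\log|\iota(\bm x\ttt\bm v)|$ as $\log|\iota(v_p)|+\log|x_p+\sum_{q\neq p}x_q\iota(v_q)/\iota(v_p)|$, and to reorganize) together with the combinatorics of $\zeta_{\mathrm{fml}}$. Iwasawa's $\log_p$ satisfies the same functional equation, so running the identical manipulation with every $\log|\cdot|$ replaced by $\log_p(\cdot)$ and $\log_\iota$ by $\log_{\iota,p}$ shows that $\sum_j\sum_{\bm x}V_p(\bm v_j,\bm x,\iota)$ equals the $p$-adic analogue of the intrinsic expression of Definition \ref{V}, which refers only to $D$; hence $V_p$ is invariant under \I. (One could also imitate the direct single-cone computation in the proof of Lemma \ref{123}-(i)-\II.)

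The main obstacle is $G_p$: one must prove, for $C(\bm v)=\coprod_l C(\bm w_l)$,
\begin{align*}
\sum_{\bm x\in R(\cdot,\bm v)}L\Gamma_p(\iota(\bm x\ttt\bm v),\iota(\bm v))=\sum_l\sum_{\bm y\in R(\cdot,\bm w_l)}L\Gamma_p(\iota(\bm y\ttt\bm w_l),\iota(\bm w_l)).
\end{align*}
I would prove the stronger identity of functions $\sum_{\bm x}\zeta_p(s,\iota(\bm v),\iota(\bm x\ttt\bm v))=\sum_l\sum_{\bm y}\zeta_p(s,\iota(\bm w_l),\iota(\bm y\ttt\bm w_l))$ on $\mathbb Z_p\setminus\{1,\dots,r\}$ with $r=\dim C(\bm v)$, then differentiate at $s=0$. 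By Proposition \ref{prpofzp}-(i) both sides are continuous there, and the non-positive integers are dense, so it is enough to check the identity at $s=-m$ for each $m\in\nni$. There the interpolation property (Proposition \ref{prpofzp}-(iii), in its $\zeta_{\mathrm{fml}}$-form when $\iota=\iota_0$) rewrites each term as $p^{-\mathrm{ord}_p(\iota(z))m}\theta_p(\iota(z))^{-m}\cdot\zeta(-m,\iota(\bm v),\iota(z))$ with $z=\bm x\ttt\bm v$, and the twisting factor $z\mapsto p^{-\mathrm{ord}_p(\iota(z))m}\theta_p(\iota(z))^{-m}$ is constant along each orbit $(\bm x+\nni^{r})\ttt\bm v$ — two points of an orbit differ multiplicatively by an element $\equiv 1\bmod\mathfrak f$, hence $\bmod\,\mfpi$ (using $\mfpi\mid\mathfrak f$ and $\mathrm{ord}_p(\iota(z))<0$), which lies in the kernel of $w\mapsto p^{\mathrm{ord}_p(w)}\theta_p(w)$ — and likewise constant along the orbits of the sub-cones. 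Grouping the base points by this common locally constant twist reduces the identity at $s=-m$ to the classical cone-additivity of Barnes' zeta values — which itself follows from $\sum_{\bm x}\zeta(s,\iota(\bm v),\iota(\bm x\ttt\bm v))=\sum_{z\in(\mathfrak a_c\mathfrak f)^{-1}\cap C(\bm v),\,(z)\mathfrak a_c\mathfrak f\in\cdot}\iota(z)^{-s}$ for $\mathrm{Re}(s)$ large, the set identity $C(\bm v)=\coprod_l C(\bm w_l)$, and analytic continuation — applied to each twist-class, each of which is again a set of the same shape, cut out by a congruence mod $\mfpi$. Carrying out this bookkeeping — reconciling the two different orbit-partitions of the cone's lattice points compatibly with the twist — is the $p$-adic counterpart of Shintani's cone-decomposition lemma and is precisely the $F\otimes\mathbb R_{(n-1)+}$-analogue of the argument already available for the $F\otimes\mathbb R_+$ case in \cite{KY1} (resting on \cite{Ka1}); this step, and not anything in the $W_p$ or $V_p$ parts, is where the work lies.
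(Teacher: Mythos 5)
Your reduction to refining one cone at a time, and your treatment of $V_p$, agree with the paper's proof (which reduces further to three elementary moves --- reordering the basis, replacing $v_1$ by $nv_1$, and the subdivision into $C(\bm v^\sharp)\coprod C(\bm v^\flat)\coprod C(\bm v^\natural)$ --- and simply quotes \cite[Lemma 3.6]{Ka3} for $V_p$); your $W_p$ argument, identifying $Z\in F$ intrinsically by applying an auxiliary embedding $\iota'\neq\iota_0$ and using injectivity, is a clean alternative. The genuine divergence is at $G_p$, and there your proposal stops exactly where you say ``the work lies,'' whereas the paper has a short mechanism that makes that work unnecessary: it states the corresponding subdivision identities for $\zeta_p(s,\bm v,z)$ itself, proves them when all entries are positive via the interpolation property (Proposition \ref{prpofzp}-(iii)), since the same identities obviously hold for the classical $\zeta(s,\bm v,z)$, and then extends them to all $\bm v$ satisfying (\ref{cond}) --- in particular to $\iota=\iota_0$ with negative entries --- by the $p$-adic continuity of $\zeta_p$ in $(\bm v,z)$ (Proposition \ref{prpofzp}-(ii)). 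Taking the value at $s=0$ and the derivative at $s=0$ then settles $W_p$ and $G_p$ at once, with no orbit or twist-class bookkeeping.

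As written, your $G_p$ step has a gap precisely in the case $\iota=\iota_0$: Proposition \ref{prpofzp}-(iii) and the classical Dirichlet-series additivity you invoke require all entries of $\iota(\bm v)$ to be positive, so neither applies literally there; the ``$\zeta_{\mathrm{fml}}$-form'' of the interpolation that you assert is true, but it is not in the paper's stated toolkit and is most easily obtained by the very continuity argument (Proposition \ref{prpofzp}-(ii)) that you never use; and the reconciliation of the two orbit partitions compatibly with the twist is acknowledged but not carried out. Your program can in fact be completed: the twist $z\mapsto p^{-\mathrm{ord}_p(\iota(z))m}\theta_p(\iota(z))^{-m}$ is constant on orbits, but the correct reason is (\ref{cond}) --- for $z'=z+\bm m\ttt\bm v$ one has $\mathrm{ord}_p(\iota(z'/z-1))>0$, so $\mathrm{ord}_p$ and $\theta_p$ do not change --- rather than your ``differ multiplicatively by an element $\equiv 1\bmod\mathfrak f$''; each twist class is then a union of orbits for the original cone and for every subcone, and the resulting per-class identity among sums of $\zeta_{\mathrm{fml}}(-m,\cdot)\in F$ can be verified by applying some $\iota'\neq\iota_0$, i.e.\ by the same injectivity trick you used for $W_p$. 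So either carry out that per-class argument explicitly, or, much more economically, prove the three $\zeta_p$-identities for positive data and invoke continuity in $\bm v$ as the paper does.
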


\begin{proof}
We use a similar argument to the proof of \cite[Lemma 3.6]{Ka3}.
We can reduce the problem to a refinement of a cone: Let $D=C(\bm v)$ with $\bm v=(v_{1},\dots,v_{r}) \in \mathcal O_F^r$.
We abbreviate $\iota(\bm v)$ as $\bm v$. 
It suffices to show that $G_p,W_p,V_p$ does not change under the following operations.
\begin{enumerate}
\item[\I] Change the order of the basis $\bm v$.
\item[\II] Replace $v_{1}$ by $nv_{1}$ with $n \in \mathbb N$.
\item[\III] Decompose $C(\bm v)$ into $C(\bm v^\sharp)\coprod C(\bm v^\flat) \coprod C(\bm v^\natural)$ with 
\begin{align*}
\bm v^\sharp:=(v_{1},v_{1}+v_{2},v_3,\dots,v_{r}), \ 
\bm v^\flat:=(v_{1}+v_{2},v_{2},v_3,\dots,v_{r}), \ 
\bm v^\natural:=(v_{1}+v_{2},v_{3},\dots,v_{r}). 
\end{align*}
\end{enumerate}
The case of $V_p$ is completely the same as \cite[Lemma 3.6]{Ka3}.
The remaining cases follows from that
\begin{align*}
&\zeta_p(s,(v_{1},\dots,v_{r}),z) \text{ does not depend on the order of the basis }v_{1},\dots,v_{r}, \\
&\zeta_p(s,(v_{1},v_{2},\dots,v_{r}),z)=\sum_{k=0}^{n-1}\zeta_p(s,(nv_{1},v_{2},\dots,v_{r}),z+kv_1),  \\
&\zeta_p(s,\bm v,\bm x\ttt \bm v)=
\begin{cases}
\zeta_p(s,\bm v^\sharp,\bm x^\sharp\ttt \bm v^\sharp)+\zeta_p(s,\bm v^\flat,\bm x^\flat\ttt \bm v^\flat) & (x_1\neq  x_2), \\
\zeta_p(s,\bm v^\sharp,\bm x^\sharp\ttt \bm v^\sharp)+\zeta_p(s,\bm v^\flat,\bm x^\flat\ttt \bm v^\flat)
+\zeta_p(s,\bm v^\natural,\bm x^\natural\ttt \bm v^\natural)  & (x_1= x_2).
\end{cases}
\end{align*}
for $\bm x=(x_1,\dots,x_r) \in R(c,\bm v)$ or $\in R(c \cup cc_{\iota_0},\bm v)$. Here we put 
\begin{align*}
\bm x^\sharp&:=
\begin{cases}
(x_1-x_2+1,x_2,x_3,\dots,x_{r}) & \text{ if } x_1<x_2, \\
(x_1-x_2,x_2,x_3,\dots,x_{r}) & \text{ if } x_1>x_2, \\
(1,x,x_3,\dots,x_{r}) & \text{ if } x_1=x_2=:x, \\
\end{cases} \\
\bm x^\flat&:=
\begin{cases}
(x_1,x_2-x_1,x_3,\dots,x_{r}) & \text{ if } x_1<x_2, \\
(x_1,x_2-x_1+1,x_3,\dots,x_{r}) & \text{ if } x_1>x_2, \\
(x,1,x_3,\dots,x_{r}) & \text{ if } x_1=x_2=:x, \\
\end{cases} \\
\bm x^\natural&:=(x,x_3,x_4,\dots,x_r) \hspace{73.5pt} \text{ if } x_1=x_2=:x. 
\end{align*}
When all $v_i$ are positive, these equations follow from the $p$-adic interpolation property, since the same equations obviously hold for $\zeta(s,\bm v,z)$.
Furthermore $\zeta_p(s,\bm v,z)$ is continuous in the sense of Proposition \ref{prpofzp}-(ii), so we can generalize equations to all $\bm v$.
\end{proof}

We proved the following $p$-adic analogue of Theorem \ref{SYf}, although we will not use it in this paper.

\begin{thm}[{\cite[Theorem 6.2]{Ka1}, \cite[Theorem 3.1]{KY1}}] \label{Kf}
Assume that $\mathfrak p$ divides $\mathfrak f$ for any prime ideal $\mathfrak p$ lying above $p$.
If $p=2$, we further assume that $2\mathfrak p$ divides $\mathfrak f$ for any $\mathfrak p$ lying above $2$.
When $D$ is a Shintani domain, we have
\begin{align*}
\zeta_p'(0,c)=\sum_{\iota \in \HFR}X_p(c,\iota).
\end{align*}
Here $\zeta_p(s,c)$ is the $p$-adic interpolation function of $\zeta(s,c)$.
\end{thm}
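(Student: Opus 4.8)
The plan is to establish the theorem by reproducing, in the $p$-adic setting, the chain of identities through which Yoshida and Shintani obtain the classical counterpart Theorem \ref{SYf}, systematically replacing the Barnes multiple zeta function $\zeta(s,\bm v,z)$ by its interpolation $\zeta_p(s,\bm v,z)$, the complex derivative at $s=0$ by the $p$-adic one, $\log\Gamma$ by $L\Gamma_p$, and the Archimedean logarithm by Iwasawa's $\log_p$. First I would recall from \cite{Ka1,KY1} the explicit construction of $\zeta_p(s,c)$. Fixing the Shintani domain $D=\coprod_{j\in J}C(\bm v_j)$ of Theorem \ref{Sd} and the sets $R(c,\bm v_j)$ of Definition \ref{R}, one has, for $\mathrm{Re}(s)$ large,
\[
\zeta(s,c)=N(\mathfrak a_c\mathfrak f)^{-s}\sum_{j\in J}\,\sum_{\bm x\in R(c,\bm v_j)}\,\sum_{\bm m\in\nni^{r(j)}}\,\prod_{\iota\in\HFR}\bigl(\iota(\bm x\ttt\bm v_j)+\bm m\ttt\iota(\bm v_j)\bigr)^{-s},
\]
and $\zeta_p(s,c)$ is obtained by interpolating the inner ($n$-fold Shintani) sum via the operator $I(\cdot)$ of Definition \ref{pmzf}-(iv), exactly as $\zeta_p(s,\bm v,z)$ is built from $I(f_{z,\bm v,s})$, and multiplying by $\langle N(\mathfrak a_c\mathfrak f)\rangle^{-s}$. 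This construction requires condition (\ref{cond}) for every triple $\bigl(\iota(\bm v_j),\iota(\bm x\ttt\bm v_j)\bigr)$ that occurs, i.e.\ $\mathrm{ord}_p(\iota(\bm x\ttt\bm v_j))<\mathrm{ord}_p(\iota(v))$ for each entry $v$ of $\bm v_j$; the hypotheses $\mfpi\mid\mathfrak f$ for every prime above $p$ — and $2\mfpi\mid\mathfrak f$ when $p=2$ — are precisely what force this, the extra factor $2$ at $p=2$ serving only to keep the relevant $p$-adic binomial and $\log$ series inside their radius of convergence. Since $\zeta_p(s,c)$ is intrinsically the $p$-adic interpolation of $\zeta(s,c)$, it is independent of the cone decomposition; that the $X_p$-side is likewise independent is Proposition \ref{welldef}.

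Next I would differentiate this construction at $s=0$. By Definition \ref{pmgf} the cone terms produce $\sum_{\iota\in\HFR}\sum_{j\in J}\sum_{\bm x\in R(c,\bm v_j)}L\Gamma_p(\iota(\bm x\ttt\bm v_j),\iota(\bm v_j))=\sum_{\iota}G_p(c,\iota)$, while the prefactor $\langle N(\mathfrak a_c\mathfrak f)\rangle^{-s}$ differentiates — using $\zeta_p(0,c)=\zeta(0,c)$ (the $m=0$ case of the interpolation property) and the fact that $\log_p$ annihilates the $p^{\mathrm{ord}_p(\cdot)}$ and $\theta_p(\cdot)$ components — into $-\zeta(0,c)\log_p N(\mathfrak a_c\mathfrak f)$. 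The heart of the argument is the $p$-adic form of Shintani's Proposition recalled in \S 3: it rewrites the $p$-adic derivative at $s=0$ of the $n$-fold Shintani sum as $\sum_{\iota}G_p(c,\iota)$ plus explicit correction terms built only from the \emph{algebraic} special values $\zeta_{\mathrm{fml}}(0,\cdot,\cdot)$ and $\zeta_{\mathrm{fml}}(-1,\cdot,\cdot)$ of Definition \ref{mzf}-(ii) together with $\log_p$'s of ratios of embeddings of the entries of the $\bm v_j$. These corrections, combined with the $-\zeta(0,c)\log_p N(\mathfrak a_c\mathfrak f)$ coming from the prefactor, reorganize — exactly as in Yoshida's classical bookkeeping, now with $\log$ replaced by $\log_p$ and (\ref{ea}) by its $p$-adic analogue relating $\log_{\iota,p}$ to $\log_p\circ\iota$ — into $\sum_{\iota}W_p(c,\iota)+\sum_{\iota}V_p(c,\iota)$; the $V_p$-part is precisely what absorbs the cross terms that classically give the quantities $v_{\iota,\iota'}$ of Definition \ref{V}, re-expressed through the ratios $\iota(v_q)/\iota(v_p)$ as in Definition \ref{V2}. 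Summing the three contributions gives $\zeta_p'(0,c)=\sum_{\iota}(G_p+W_p+V_p)(c,\iota)=\sum_{\iota}X_p(c,\iota)$.

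It then remains to check that nothing depends on the remaining auxiliary data: independence of $D$ and of $\mathfrak a_c$ is the $p$-adic analogue of Lemma \ref{123}, proved by the same three elementary cone operations used there with Proposition \ref{prpofmg}-(ii) in place of Proposition \ref{prpofmg}-(i); these invariances are in any case forced, since $\zeta_p(s,c)$ is canonical.

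The step I expect to be the main obstacle is the $p$-adic transcription of Shintani's Proposition. Classically, rewriting the derivative at $s=0$ of the $n$-fold sum $\sum_{\bm m\in\nni^r}\prod_{\iota}(\iota(\bm x\ttt\bm v)+\bm m\ttt\iota(\bm v))^{-s}$ as a sum of single-index Barnes multiple zeta functions plus correction terms is an intricate analytic computation. In the $p$-adic case one must (a) present the corresponding object as an Iwasawa-type function analytic at $s=0$; (b) verify that condition (\ref{cond}) persists through every cone subdivision used in the manipulation — which is exactly where the hypotheses on $\mathfrak f$, and the extra factor $2$ at $p=2$, are genuinely needed; and (c) keep track of the twists $p^{-\mathrm{ord}_p(\cdot)}$ and $\theta_p(\cdot)$ supplied by Proposition \ref{prpofzp}-(iii) and check that, after the triple sum over $\iota$, $j$ and $\bm x$, they recombine into precisely the global twist $\langle N(\mathfrak a_c\mathfrak f)\rangle^{-s}$ built into $\zeta_p(s,c)$ — so that, using the continuity of $\zeta_p(s,\bm v,z)$ in $(\bm v,z)$ (Proposition \ref{prpofzp}-(ii)) and the density of the non-positive integers in $\mathbb Z_p$, the identity is pinned down on a neighbourhood of $s=0$, hence at the level of derivatives.
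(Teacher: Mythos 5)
First, a point of comparison: the paper itself contains no proof of Theorem \ref{Kf}; it is recalled from \cite[Theorem 6.2]{Ka1} and \cite[Theorem 3.1]{KY1} and, as the text notes, is not even used later in the paper. Your outline does follow the strategy of those cited works: build the $p$-adic interpolation of the Shintani cone sums via the operator $I$ of Definition \ref{pmzf}, use the hypothesis $\mfpi\mid\mathfrak f$ for all $\mfpi$ above $p$ (and the stronger one for $p=2$) to secure condition (\ref{cond}) and the analyticity of $\langle\cdot\rangle^{-s}$ near $s=0$, differentiate the prefactor into $-\zeta(0,c)\log_p N(\mathfrak a_c\mathfrak f)$, and pin everything down using the interpolation property, the continuity of Proposition \ref{prpofzp}-(ii), the density of the non-positive integers, and Proposition \ref{welldef} for independence of the cone decomposition.

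However, as a proof the proposal has a genuine gap exactly at what you yourself flag as the main obstacle: the $p$-adic analogue of Shintani's decomposition of the derivative of the $n$-fold cone zeta into $\sum_{\iota}G_p(c,\iota)$ plus explicit corrections, together with Yoshida's redistribution of those corrections into $\sum_{\iota}W_p(c,\iota)+\sum_{\iota}V_p(c,\iota)$. That step is not a routine transcription, and it is the entire content of the two cited theorems. In particular, the classical bookkeeping you invoke passes through the auxiliary series $v_{\iota,\iota'}$ of Definition \ref{V}, built from the analytic continuation of $\sum_{z}\bigl((\iota(z)\iota'(z))^{-s}-\iota(z)^{-s}-\iota'(z)^{-s}\bigr)$, which mixes two embeddings and has no obvious $p$-adic counterpart; this is precisely why the paper defines $V_p$ through the finite Bernoulli-polynomial expression of Definition \ref{V2} rather than by mimicking Definition \ref{V}, and why the derivative computation in \cite{Ka1} is carried out by a genuinely $p$-adic argument rather than by replacing $\log$ with $\log_p$ inside Shintani's Archimedean manipulation. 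Writing that the corrections ``reorganize exactly as in Yoshida's classical bookkeeping'' therefore asserts, rather than establishes, the identity to be proved. To make the argument stand on its own you would have to carry out the $p$-adic derivative computation (the content of \cite[Theorem 6.2]{Ka1}) and then verify the redistribution identities with $\log_p$ and $\log_{\iota,p}$ in place of $\log$ and $\log_\iota$ (the content of \cite[Theorem 3.1]{KY1}); otherwise the honest route is the one the paper takes, namely to cite those results.
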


\begin{lmm} \label{123p}
Let $\iota \in \HFR$ satisfy $\mfpi \mid \mathfrak f$. 
Then the same statements as in Lemmas \ref{123}, \ref{1232}, \ref{12} hold, by replacing $X,G,W,V,\log,\log_\iota$ with $X_p,G_p,W_p,V_p,\log_p,\log_{\iota,p}$, 
not excepting $\iota=\iota_0$.
We also replace $\zeta(0,\iota(\bm v_j),\iota(\bm x\ttt\bm v_j))$ with $\zeta_{\mathrm{fml}}(0,\iota(\bm v_j),\iota(\bm x\ttt\bm v_j))$
when $\iota(\bm v_j)$ has a negative entry.
\end{lmm}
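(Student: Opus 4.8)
The plan is to mimic, step by step, the arguments establishing Lemmas \ref{123}, \ref{1232}, \ref{12}, tracking what changes when the Archimedean data $\log\iota(\cdot)$, $\log_\iota(\cdot)$, $\zeta(0,\cdots)$ are replaced by the $p$-adic data $\log_p\iota(\cdot)$, $\log_{\iota,p}(\cdot)$, $\zeta_{\mathrm{fml}}(0,\cdots)$. The key point is that each of the three moves \I, \II, \III is, after the reduction in Proposition \ref{welldef}, ultimately controlled by the transformation formulas for $\zeta_p(s,\bm v,z)$ and $L\Gamma_p(z,\bm v)$ under scaling, namely Proposition \ref{prpofmg}-(ii): $\zeta_p(s,\alpha\bm v,\alpha z)=\langle\alpha\rangle^{-s}\zeta_p(s,\bm v,z)$ and $L\Gamma_p(\alpha z,\alpha\bm v)=L\Gamma_p(z,\bm v)-\zeta_p(0,\bm v,z)\log_p\alpha$. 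These play exactly the role that $\zeta(s,\alpha\bm v,\alpha z)=\alpha^{-s}\zeta(s,\bm v,z)$ and $\Gamma(\alpha z,\alpha\bm v)=\Gamma(z,\bm v)\alpha^{-\zeta(0,\bm v,z)}$ played in the classical case, so every classical identity has a verbatim $p$-adic shadow once one systematically substitutes $\log_p$ for $\log$.

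First I would treat \I: for $G_p$, $W_p$, $V_p$ invariance under refinement of a cone is precisely the content of Proposition \ref{welldef}, so nothing new is needed. Next, for \II (replacing $C(\bm v_{j_0})$ by $C(\epsilon\bm v_{j_0})$ with $\epsilon\in E_{F,+}$): I would repeat the computation sketched after Lemma \ref{123}, replacing the set identity (\ref{eqofsets}) — which still holds since it is purely ideal-theoretic — and then use $L\Gamma_p(\iota(\epsilon)\iota(z),\iota(\epsilon)\iota(\bm v))=L\Gamma_p(\iota(z),\iota(\bm v))-\zeta_p(0,\iota(\bm v),\iota(z))\log_p\iota(\epsilon)$ in place of the $\Gamma$-scaling. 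Summing over $\bm x\in R(c,\bm v_{j_0})$ (resp.\ $R(c\cup cc_{\iota_0},\bm v_{j_0})$) and noting $\zeta_p(0,\iota(\bm v_j),\iota(\bm x\ttt\bm v_j))=\zeta_{\mathrm{fml}}(0,\iota(\bm v_j),\iota(\bm x\ttt\bm v_j))$ by the interpolation property (here is where one uses $\mfpi\mid\mathfrak f$, which guarantees (\ref{cond}) holds even when $\iota=\iota_0$), one gets the change $-\iota(Z_{j_0})\log_p\iota(\epsilon)$ for $G_p$, no change for $W_p$ (it depends on $\bm v_{j_0}$ only through $R$ and $\zeta_{\mathrm{fml}}$, both insensitive to scaling by a unit), and the $V_p$ change as in Definition \ref{V2} with $\log_p$; the $V_p$ computation is, as noted in Proposition \ref{welldef}, identical to \cite[Lemma 3.6]{Ka3} with $\log$ replaced by $\log_p$. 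For \III, the analogous substitution gives $G_p\mapsto G_p+\sum_j\iota(Z_j)\log_p\iota(\alpha)$, and for $W_p$ one needs the $p$-adic analogue of (\ref{ea}): writing $\log_{\iota,p}(\alpha)=\frac1{h_{F,+}}\log_p\iota(\alpha')$ and using that $\log_p$ is a homomorphism on $\mathbb C_p^\times$, the difference $\log_p\iota(\alpha)-\log_{\iota,p}(\alpha)$ equals $\frac1{h_{F,+}}\log_p\iota(u_\alpha)$ with the same $u_\alpha\in E_{F,+}$; this yields $W_p\mapsto W_p-\sum_j\iota(Z_j)\log_{\iota,p}(\alpha)$ and, crucially, the cancellations in $X_p=G_p+W_p+V_p$ go through exactly because the $p$-adic log of a norm behaves additively just like the real log (only the $\log\iota(\alpha)$ and $\frac1n\log N_{F/\mathbb Q}(\alpha)$ terms survive, and those recombine into $\frac{\zeta_{\mathrm{fml}}(0,\,\cdot\,)}{h_{F,+}}\log_p\iota(u_\alpha)$ exactly as before). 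Finally, the Lemma \ref{12} statements follow by feeding these $p$-adic \I--\III formulas into the same bookkeeping as in \cite[Lemma 3.12]{Ka3}, again with $\log\mapsto\log_p$ throughout; the replacement of $\zeta(0,c)$ by $\zeta(0,c\cup cc_{\iota_0})$ (as defined in Lemma \ref{1232}) is automatic since only $\zeta_{\mathrm{fml}}(0,\cdots)$-values enter, and $\zeta_{\mathrm{fml}}(0,\bm v,z)=\zeta(0,\bm v,z)$ whenever $\bm v$ is totally positive.

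The one point that requires genuine care — and which I expect to be the main obstacle — is the case $\iota=\iota_0$, which is explicitly included here (``not excepting $\iota=\iota_0$'') but was excluded in the classical Lemmas \ref{1232}, \ref{12}. Classically one cannot even write $G(c\cup cc_{\iota_0},\iota_0,\dots)$ because $\iota_0(\bm v_j)$ may have negative entries, so $\Gamma(\iota_0(\bm x\ttt\bm v_j),\iota_0(\bm v_j))$ is undefined. The $p$-adic objects, by contrast, are defined purely via condition (\ref{cond}) — which holds for $\iota=\iota_0$ precisely because $\mfpi\mid\mathfrak f$ forces $\mathrm{ord}_p(\iota_0(\bm x\ttt\bm v_j))<\mathrm{ord}_p(\iota_0(v_{j,i}))$ — and the scaling laws of Proposition \ref{prpofmg}-(ii) hold for all $\alpha\in\mathbb C_p^\times$ with no positivity requirement. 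So the proof for $\iota=\iota_0$ is \emph{formally} the same, but I would need to check that every set-theoretic manipulation used in the classical argument (e.g.\ the decomposition identities for $R(c\cup cc_{\iota_0},\bm v)$ under operation \III, and the exchange-of-roles argument in Lemma \ref{12}-(i)) is insensitive to whether $\iota=\iota_0$ or not — which it is, since those manipulations take place at the level of ideals and of the index sets $R(\cdot,\bm v)$, not at the level of the embedding — and then invoke the continuity of $\zeta_p$ in $\bm v$ (Proposition \ref{prpofzp}-(ii)) exactly as in Proposition \ref{welldef} to push identities from the totally positive case to the general $\bm v$. I would end by remarking that the substitution $\zeta(0,\iota(\bm v_j),\iota(\bm x\ttt\bm v_j))\rightsquigarrow\zeta_{\mathrm{fml}}(0,\iota(\bm v_j),\iota(\bm x\ttt\bm v_j))$ in the statement is forced for exactly this reason — when $\iota=\iota_0$ the left-hand side need not be defined — and costs nothing since the two agree on totally positive arguments and only $\zeta_{\mathrm{fml}}$ ever appears in $W_p$, $V_p$, and in $\zeta_p(0,\bm v,z)$.
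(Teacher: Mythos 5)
Your proposal is correct and follows essentially the same route as the paper: repeat the classical arguments verbatim, with Proposition \ref{welldef} handling operation \I, the set identity (\ref{eqofsets}) together with the scaling law of Proposition \ref{prpofmg}-(ii) handling \II\ and \III, and continuity (Proposition \ref{prpofzp}-(ii)) plus the interpolation property justifying the passage to non-totally-positive $\bm v_j$ (hence the inclusion of $\iota=\iota_0$ and the substitution of $\zeta_{\mathrm{fml}}$). The paper's proof is just a terser version of exactly this, worked out on the sample case \ref{123}-(i)-\II.
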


\begin{proof}
The same proof as those of \cite[Chap.\ III, \S 3.6, 3.7]{Yo}, \cite[Lemmas 3.10, 3.11, 3.12]{Ka3} works,
by using Propositions \ref{prpofmg}-(ii), \ref{welldef}.
For example, consider the $p$-adic analogue of Lemma \ref{123}-(i)-\II:
Let $D=\coprod_{j \in J}C(\bm v_j)$, $D'=(\coprod_{j \in J-\{j_0\}}C(\bm v_j)) \coprod C(\epsilon \bm v_{j_0})$.
Then we have
\begin{align*}
&G_p(c,\iota,D',\mathfrak a_c)-G_p(c,\iota,D,\mathfrak a_c) \\
&=\sum_{\bm x \in R(c,\epsilon \bm v_{j_0})}L\Gamma_p(\iota(\bm x\ttt \epsilon \bm v_{j_0}),\iota(\epsilon \bm v_{j_0}))
-\sum_{\bm x \in R(c,\bm v_{j_0})}L\Gamma_p(\iota(\bm x\ttt\bm v_{j_0}),\iota(\bm v_{j_0})).
\end{align*}
We see that $R(c,\epsilon \bm v_{j_0})=R(c,\bm v_{j_0})$ by (\ref{eqofsets}).
Hence Proposition \ref{prpofmg}-(ii) states that 
\begin{align*}
G_p(c,\iota,D',\mathfrak a_c)-G_p(c,\iota,D,\mathfrak a_c)=-\iota(Z_{j_0})\log_p\iota(\epsilon)
\end{align*}
as desired.
The other cases can be proved similarly.
\end{proof}

\section{The case when a real place splits completely}

Theorem \ref{main1} is one of the main results in this paper.
The Archimedean part (\ref{main1arch}) was proved in \cite{Ka3}.
The $p$-adic part (\ref{main1padic}) is a new result, although it is proved by quite similar arguments.
Let $c_{\iota}=[(\nu_\iota)] \in C_\mathfrak f$ be as in Lemma \ref{ciota}.

\begin{thm} \label{main1}
Let $c \in C_\mathfrak f$, $\iota_0 \in \HFR$. 
Then there exist $u \in E_{F,+}$, $m \in \mathbb N$ satisfying 
\begin{align}
\exp(X(c,\iota))\exp(X(cc_{\iota_0},\iota))&=\iota(u)^{\frac{1}{m}} \quad (\iota \in \HFR,\ \iota \neq \iota_0), \label{main1arch} \\
X_p(c,\iota)+X_p(cc_{\iota_0},\iota)&=\frac{1}{m}\log_p \iota(u) \quad (\iota \in \HFR,\ \mfpi \mid \mathfrak f). \label{main1padic}
\end{align}
\end{thm}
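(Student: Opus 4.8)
The plan is to deduce both identities simultaneously from the comparison lemmas between $X(c\cup cc_{\iota_0},\iota)$ and the pair $X(c,\iota)$, $X(cc_{\iota_0},\iota)$ (Lemma \ref{12}), together with the fact that $X(c\cup cc_{\iota_0},\iota,D,\mathfrak a_c)$ is essentially \emph{independent} of the Shintani data when $C(\bm v_j)\subset F\otimes\mathbb R_{(n-1)+}$ but $\iota$ does not see $\iota_0$. The key observation is that when $\iota\neq\iota_0$, the invariant attached to the union $c\cup cc_{\iota_0}$ can be computed from a Shintani domain $D$ that lies in $F\otimes\mathbb R_{(n-1)+}$ (only the $\iota_0$-coordinate is allowed to change sign), and such domains are plentiful; the transformation formulas in Lemma \ref{1232} (the analogue of Lemma \ref{123}) show that changing $D,\mathfrak a_c$ alters $X(c\cup cc_{\iota_0},\iota,D,\mathfrak a_c)$ only by $\tfrac1N\log\iota(u')$ for some $u'\in E_{F,+}$, $N\in\mathbb N$. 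Meanwhile Lemma \ref{12}-(i) relates the value built from $D$ to the value built from $\nu_{\iota_0}^{-1}D$ with the roles of $c,c_{\iota_0}$ exchanged, again up to $\tfrac{\zeta(0,c)}{h_{F,+}}\log\iota(u_{\nu_{\iota_0}})$; and Lemma \ref{12}-(ii) expands $X(c\cup cc_{\iota_0},\iota)$ into $X(c,\iota)+X(cc_{\iota_0},\iota)$ plus an explicit $\log\iota(\text{unit})$ term. Chaining these, the product $\exp(X(c,\iota))\exp(X(cc_{\iota_0},\iota))$ is forced to equal $\iota$ applied to a single element of $F^\times$ up to a rational power of a totally positive unit; since the left side is positive under every $\iota\neq\iota_0$ and the element is globally determined, it must be (a root of) a totally positive unit.

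Concretely, the steps I would carry out are as follows. First, fix a Shintani domain $D=\coprod_{j}C(\bm v_j)$ and an ideal $\mathfrak a_c$; by the $\I$,$\II$,$\III$ operations I may assume all $\bm v_j\in\mathcal O_{F,(n-1)+}$, i.e.\ $D\subset F\otimes\mathbb R_{(n-1)+}$ (this uses that $E_{F,(n-1)+}^\times$ and suitable $\nu$'s let one push a Shintani domain into the half-space, exactly as in \cite{Ka3}). Second, apply Lemma \ref{12}-(ii): if $c_{\iota_0}=[(1)]$ or $\mathcal O_{F,(n-1)+}^\times=\emptyset$ the identity $X(c\cup cc_{\iota_0},\iota)=X(c,\iota)$ collapses things (and one handles $cc_{\iota_0}$ symmetrically, or notes $c=cc_{\iota_0}$ in that degenerate case so the claim is about $\exp(X(c,\iota))^2$); otherwise pick $\epsilon\in\mathcal O_{F,(n-1)+}^\times$ and get
\begin{align*}
X(c\cup cc_{\iota_0},\iota,D,\mathfrak a_c)=X(c,\iota,D,\mathfrak a_c)+X(cc_{\iota_0},\iota,\nu_{\iota_0}^{-1}\epsilon D,(\nu_{\iota_0})\mathfrak a_c)+\tfrac{\zeta(0,cc_{\iota_0})}{h_{F,+}}\log\iota(u_{\nu_{\iota_0}\epsilon^{-1}}).
\end{align*}
Third, apply the same expansion with the roles of $c$ and $c_{\iota_0}$ swapped, using Lemma \ref{12}-(i) to identify $X(cc_{\iota_0}\cup c,\iota,\nu_{\iota_0}^{-1}D,(\nu_{\iota_0})\mathfrak a_c)$ with $X(c\cup cc_{\iota_0},\iota,D,\mathfrak a_c)$ up to $\tfrac{\zeta(0,c)}{h_{F,+}}\log\iota(u_{\nu_{\iota_0}})$. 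Fourth, subtract: the terms $X(c\cup cc_{\iota_0},\iota)$ cancel, and one is left with $X(c,\iota)+X(cc_{\iota_0},\iota)$ (evaluated for the fixed $D,\mathfrak a_c$) expressed as a finite $\mathbb Q$-linear combination of $\log\iota(u_\bullet)$ with $u_\bullet\in E_{F,+}$ — hence $X(c,\iota)+X(cc_{\iota_0},\iota)=\tfrac1m\log\iota(u)$ for a single $u\in E_{F,+}$ and $m\in\mathbb N$ \emph{not depending on $\iota$}, because the $Z_j$'s, $\zeta(0,\cdot)$'s, $u_\bullet$'s are all global. Exponentiating gives \eqref{main1arch}.

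For the $p$-adic part \eqref{main1padic}, the plan is to run the identical chain verbatim with $X,G,W,V,\log,\log_\iota$ replaced by $X_p,G_p,W_p,V_p,\log_p,\log_{\iota,p}$, which is legitimate precisely because Lemma \ref{123p} asserts the $p$-adic analogues of Lemmas \ref{123}, \ref{1232}, \ref{12} hold whenever $\mfpi\mid\mathfrak f$ — and crucially \emph{including} $\iota=\iota_0$, since in the $p$-adic world the relevant topology is $\mfpi$ rather than the Archimedean one, so no real place is distinguished. The same global unit $u$ and the same $m$ appear (the combinatorial identity is the same, only $\log\iota(u)$ is replaced by $\log_p\iota(u)$), which is why the statement can pin down a single $u,m$ working for both \eqref{main1arch} and \eqref{main1padic}. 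The one point requiring care is the bookkeeping of the finitely many $\log\iota(u_\bullet)$ error terms: I must check that after the subtraction in step four the coefficients of the $\log\iota(u_\bullet)$ are rational with a common denominator (they are, being values of $\zeta(0,\cdot)\in\mathbb Q$ divided by $h_{F,+}$, plus the $\tfrac1N$ from the domain-change lemmas), so that collecting them into one $\tfrac1m\log\iota(u)$ is valid; and I must verify the degenerate cases ($c_{\iota_0}=[(1)]$, $\mathcal O_{F,(n-1)+}^\times=\emptyset$) separately, where the argument is easier because $X(c\cup cc_{\iota_0},\iota)=X(c,\iota)$ outright. I expect this error-term accounting, rather than any conceptual difficulty, to be the main (purely technical) obstacle; everything else is a transcription of the arguments already in \cite{Ka3} and the lemmas quoted above.
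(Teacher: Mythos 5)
Your parallel treatment of the archimedean and $p$-adic identities (one chain of manipulations, run once with $\log$ and once with $\log_p$ via Lemma \ref{123p}, so that the same $u,m$ serve both) matches the paper's strategy, and your use of Lemma \ref{12} is right in spirit; but your step four contains a genuine gap, not a bookkeeping issue. In the two expansions you form (Lemma \ref{12}-(ii) for $c\cup cc_{\iota_0}$ and, with the roles swapped, for $cc_{\iota_0}\cup c$) the same pair $X(c,\iota,\cdot)+X(cc_{\iota_0},\iota,\cdot)$ appears on the right with the \emph{same} sign, because $c_{\iota_0}^2=[(1)]$. So when you subtract the two expansions after matching their left-hand sides through Lemma \ref{12}-(i), the sum you are trying to evaluate cancels along with the union terms; what survives is a relation among differences such as $X(c,\iota,D',\mathfrak a')-X(c,\iota,D,\mathfrak a_c)$, which Lemma \ref{123}-(iv) already tells you are unit logarithms. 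You get a tautology, not \eqref{main1arch}. The underlying problem is that every lemma you invoke (Lemmas \ref{123}, \ref{1232}, \ref{12} and their $p$-adic analogues) is \emph{relative}: it compares values of the invariants at different auxiliary data, or exchanges the roles of $c$ and $c_{\iota_0}$, or rewrites the union invariant as the sum of the two single-class invariants plus a unit log. None of them assigns a value to $X(c\cup cc_{\iota_0},\iota,D,\mathfrak a_c)$ itself, and in these relations the union invariant and the sum $X(c,\iota)+X(cc_{\iota_0},\iota)$ always travel together; eliminating one eliminates the other, so no chain of them alone can pin the sum down to a rational multiple of $\log\iota(u)$.

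The missing ingredient, which is the heart of the paper's proof, is the absolute evaluation supplied by \cite[Lemma 3.4]{Ka3}: there exist $\nu\in F_{(n-1)+}$, finitely many cone sets $X_t\subset F\otimes\mathbb R_{(n-1)+}$ and units $\epsilon_t\in E_{F,+}$ with the multiset identity $\left(D\coprod\nu D\right)\biguplus\left(\biguplus_{t\in T}\epsilon_t X_t\right)=\biguplus_{t\in T}X_t$. Feeding this into the operation-\II\ transformation formula (Lemma \ref{1232}-(iv)-\II\ and its $p$-adic counterpart in Lemma \ref{123p}) yields, for $*=\emptyset$ or $p$,
\begin{align*}
X_*(c\cup cc_{\iota_0},\iota,D,\mathfrak a_c)+X_*(c\cup cc_{\iota_0},\iota,\nu D,\mathfrak a_c)
=\sum_{t\in T}\frac{\mathrm{Tr}_{F/\mathbb Q}(Z_t)}{n}\,\log_*\iota(\epsilon_t),
\end{align*}
an identity with an explicitly computed, unit-logarithmic right-hand side; this is the absolute input your scheme lacks. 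Only after this does the paper perform the manipulations close to the ones you describe — Lemma \ref{12}-(i),(ii) and Lemma \ref{123}-(iv), with the case split according to $c_{\iota_0}=[(1)]$ or $\mathcal O_{F,(n-1)+}^{\times}=\emptyset$ versus the general case — to turn the left-hand side into $X_*(c,\iota,D,\mathfrak a_c)+X_*(cc_{\iota_0},\iota,D,\mathfrak a_{cc_{\iota_0}})$ plus an explicit $\mathbb Q$-linear combination of $\log_*\iota$ of elements of $E_{F,+}$; the error-term accounting you worry about is indeed there, but it is routine once the displayed identity is available. Two minor further points: your first step is vacuous, since a Shintani domain is totally positive and hence already lies in $F\otimes\mathbb R_{(n-1)+}$; and in the degenerate case $\mathcal O_{F,(n-1)+}^{\times}=\emptyset$ with $c_{\iota_0}\neq[(1)]$ one does not have $c=cc_{\iota_0}$, so that case is not reducible to a statement about $\exp(X(c,\iota))^2$ and still requires the full argument.
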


\begin{proof}
(\ref{main1arch}) was proved in \cite[Proof of Theorem 3.1]{Ka3} by using Lemmas \ref{123}, \ref{1232}, \ref{12}, and \cite[Lemma 3.4]{Ka3}.
We can repeat the same argument for the $p$-adic analogue (\ref{main1padic}) by Lemma \ref{123p}:
Let $D$, $\nu$, $X_t$, $\epsilon_t$ ($t \in T$, $|T|<\infty$) be as in \cite[Lemma 3.4]{Ka3}.
Then we have 
\begin{itemize}
\item $D$ is a Shintani domain, $\nu \in F_{(n-1)+}$, $\epsilon_t \in E_{F,+}$. 
\item Each $X_t$ is a subset of $F\otimes \mathbb R_{(n-1)+}$ which can be expressed as a finite disjoint union of cones of $F$:
$X_t=\coprod_{j \in J_t} C(\bm v_j)$.
\item  We denotes by $\biguplus$ the multiset sum. Then we have
\begin{align*}
\left(D\coprod \nu D\right) \biguplus \left(\biguplus_{t \in T} \epsilon_t X_t \right)=\biguplus_{t \in T} X_t.
\end{align*}
\end{itemize}
Then, by Lemma \ref{1232}-(iv)-\II \ or its $p$-adic analogue in Lemma \ref{123p}, we have for $*=\emptyset$ or $p$
\begin{align*} 
X_*(c \cup cc_{\iota_0},\iota,D,\mathfrak a_c)+X_*(c \cup cc_{\iota_0},\iota,\nu D,\mathfrak a_c)
&=X_*(c \cup cc_{\iota_0},\iota,D\coprod \nu D,\mathfrak a_c) \\
&=\sum_{t \in T}\frac{\mathrm{Tr}_{F/\mathbb Q}(Z_t)}{n} \log_* \iota(\epsilon_t),
\end{align*}
where we put
\begin{align*}
Z_t:=\sum_{j \in J_t}\sum_{\bm x \in R(c \cup cc_{\iota_0},\bm v_j)}\zeta_{\mathrm{fml}}(0,\bm v_j,\bm x \ttt \bm v_j).
\end{align*}
First we assume that $c_{\iota_0}=[(1)]$ or $\mathcal O_{F,(n-1)+}^{\times}=\emptyset$.
Then we have
\begin{align*}
&X_*(c \cup cc_{\iota_0},\iota,D,\mathfrak a_c)=X_*(c,\iota,D,\mathfrak a_c), \\
&X_*(c \cup cc_{\iota_0},\iota,\nu D,\mathfrak a_c)\\
&=X_*(c \cup cc_{\iota_0},\iota,\nu_{\iota_0}^{-1}\nu D,(\nu_{\iota_0})\mathfrak a_c) 
- \frac{\zeta(0,cc_{\iota_0})}{h_{F,+}}\log_* \iota(u_{v_{\iota_0}}) \\
&=X_*(cc_{\iota_0},\iota,\nu_{\iota_0}^{-1}\nu D,(\nu_{\iota_0})\mathfrak a_c) - \frac{\zeta(0,cc_{\iota_0})}{h_{F,+}}\log_* \iota(u_{v_{\iota_0}}) \\
&=X_*(cc_{\iota_0},\iota,D,\mathfrak a_{cc_{\iota_0}}) - \frac{\zeta(0,cc_{\iota_0})}{h_{F,+}}\log_* \iota(u_{v_{\iota_0}}) + 
\frac{1}{N}\log_* \iota(u_{cc_{\iota_0},D,\nu_{\iota_0}^{-1}\nu D,\mathfrak a_{cc_{\iota_0}},(\nu_{\iota_0})\mathfrak a_c})
\end{align*}
by Lemma \ref{12}-(ii), (i), (ii), Lemma \ref{123}-(iv), and their $p$-adic analogues respectively.
Summarizing the above, we can write
\begin{align*}
&X_*(c,\iota,D,\mathfrak a_c)+X_*(cc_{\iota_0},\iota,D,\mathfrak a_{cc_{\iota_0}}) \notag \\
&=\frac{\zeta(0,cc_{\iota_0})}{h_{F,+}}\log_* \iota(u_{v_{\iota_0}})
-\frac{1}{N}\log_* \iota(u_{cc_{\iota_0},D,\nu_{\iota_0}^{-1}\nu D,\mathfrak a_{cc_{\iota_0}},(\nu_{\iota_0})\mathfrak a_c})
+\sum_{t \in T}\frac{\mathrm{Tr}_{F/\mathbb Q}(Z_t)}{n} \log_* \iota(\epsilon_t). 
\end{align*}
Next, we assume that $c_{\iota_0}\neq [(1)]$ and $\mathcal O_{F,(n-1)+}^{\times} \neq \emptyset$.
We take an element $\epsilon\in \mathcal O_{F,(n-1)+}^{\times}$.
Then we have
\begin{align*}
&X_*(c\cup cc_{\iota_0},\iota, D,\mathfrak a_c) \\
&=X_*(c,\iota,D,\mathfrak a_c)+X_*(cc_{\iota_0},\iota,\nu_{\iota_0}^{-1}\epsilon D,(\nu_{\iota_0})\mathfrak a_c) 
+\frac{\zeta(0,cc_{\iota_0})}{h_{F,+}}\log_* \iota(u_{\nu_{\iota_0}\epsilon^{-1}}), \\
&X_*(c \cup cc_{\iota_0},\iota,\nu D,\mathfrak a_c)
=X_*(c \cup cc_{\iota_0},\iota,\nu_{\iota_0}^{-1}\nu D,(\nu_{\iota_0})\mathfrak a_c) 
- \frac{\zeta(0,cc_{\iota_0})}{h_{F,+}}\log_* \iota(u_{v_{\iota_0}}), \\
&X_*(c \cup cc_{\iota_0},\iota,\nu_{\iota_0}^{-1}\nu D,(\nu_{\iota_0})\mathfrak a_c) \\
&=X_*(cc_{\iota_0},\iota,\nu_{\iota_0}^{-1}\nu D,(\nu_{\iota_0})\mathfrak a_c)
+X_*(c,\iota,\nu_{\iota_0}^{-2}\epsilon \nu D,(\nu_{\iota_0}^2)\mathfrak a_c)
+\frac{\zeta(0,c)}{h_{F,+}}\log_* \iota(u_{\nu_{\iota_0}\epsilon^{-1}}), \\
&X_*(cc_{\iota_0},\iota,\nu_{\iota_0}^{-1}\epsilon D,(\nu_{\iota_0})\mathfrak a_c)
=X_*(cc_{\iota_0},\iota,D,\mathfrak a_{cc_{\iota_0}}) 
+\frac{1}{N}\log_* \iota(u_{cc_{\iota_0},D,\nu_{\iota_0}^{-1}\epsilon D,\mathfrak a_{cc_{\iota_0}},(\nu_{\iota_0})\mathfrak a_c}), \\
&X_*(cc_{\iota_0},\iota,\nu_{\iota_0}^{-1}\nu D,(\nu_{\iota_0})\mathfrak a_c)
=X_*(cc_{\iota_0},\iota,D,\mathfrak a_{cc_{\iota_0}})
+\frac{1}{N}\log_* \iota(u_{cc_{\iota_0},D,\nu_{\iota_0}^{-1}\nu D,\mathfrak a_{cc_{\iota_0}},(\nu_{\iota_0})\mathfrak a_c}), \\
&X_*(c,\iota,\nu_{\iota_0}^{-2}\epsilon \nu D,(\nu_{\iota_0}^2)\mathfrak a_c)
=X_*(c,\iota,D,\mathfrak a_c)
+\frac{1}{N}\log_* \iota(u_{c,D,\nu_{\iota_0}^{-2}\epsilon \nu D,\mathfrak a_c,(\nu_{\iota_0}^2)\mathfrak a_c})
\end{align*}
by Lemma \ref{12}-(ii), (i), (ii), Lemma \ref{123}-(iv) (3 times), and their $p$-adic analogues respectively.
Summarizing the above, we can write
\begin{align*}
&X_*(c,\iota,D,\mathfrak a_c)+X_*(cc_{\iota_0},\iota,D,\mathfrak a_{cc_{\iota_0}}) \\ 
&=\frac{\zeta(0,cc_{\iota_0})}{2h_{F,+}}\log_* \iota(u_{v_{\iota_0}})
-\frac{1}{2N}\log_* \iota(u_{cc_{\iota_0},D,\nu_{\iota_0}^{-1}\epsilon D,\mathfrak a_{cc_{\iota_0}},(\nu_{\iota_0})\mathfrak a_c})
-\frac{\zeta(0,cc_{\iota_0})}{2h_{F,+}}\log_* \iota(u_{\nu_{\iota_0}\epsilon^{-1}}) \\
&\quad -\frac{1}{2N}\log_* \iota(u_{cc_{\iota_0},D,\nu_{\iota_0}^{-1}\nu D,\mathfrak a_{cc_{\iota_0}},(\nu_{\iota_0})\mathfrak a_c})
-\frac{1}{2N}\log_* \iota(u_{c,D,\nu_{\iota_0}^{-2}\epsilon \nu D,\mathfrak a_c,(\nu_{\iota_0}^2)\mathfrak a_c}) \\
&\quad -\frac{\zeta(0,c)}{2h_{F,+}}\log_* \iota(u_{\nu_{\iota_0}\epsilon^{-1}})
+\sum_{t \in T}\frac{\mathrm{Tr}_{F/\mathbb Q}(Z_t)}{2n} \log_* \iota(\epsilon_t).
\end{align*}
Thus the assertion holds in both cases. 
\end{proof}

\begin{crl} \label{crlofmain1}
Let $p,\iota$ satisfy $\mfpi \mid \mathfrak f$.
Let $\exp_p \colon \mathbb C_p \ra \mathbb C_p^\times$ be any group homomorphism 
which coincides with the usual power series $\sum_{n=0}^\infty \frac{z^n}{n!}$ on a neighborhood of $0$.
We denote by $\mu_\infty$ the group of all roots of unity.
\begin{enumerate}
\item The ratio $[\exp(X(c,\iota)):\exp_p(X_p(c,\iota))] \bmod \mu_\infty$ depends only on $c,\iota$. 
Strictly speaking, 
let $D,D'$ be Shintani domains, and $\mathfrak a_c,\mathfrak a_c'$ integral ideals satisfying $\mathfrak a_c\mathfrak f,\mathfrak a_c'\mathfrak f \in \pi(c)$.
Then we have
\begin{align*}
\frac{\exp(X(c,\iota,D,\mathfrak a_c))}{\exp(X(c,\iota,D',\mathfrak a_c'))} \equiv \frac{\exp_p(X_p(c,\iota,D,\mathfrak a_c))}{\exp_p(X_p(c,\iota,D',\mathfrak a_c'))}
\bmod \mu_\infty.
\end{align*}
\item Whenever $\mfpi \mid \mathfrak f$ ($p$ may vary), we have
\begin{align*}
\frac{\exp(X(c,\iota))\exp(X(cc_\iota,\iota))}{\exp_p(X_p(c,\iota))\exp_p(X_p(cc_\iota,\iota))} \equiv \exp(\zeta'(0,c))\exp(\zeta'(0,cc_\iota)) \bmod \mu_\infty.
\end{align*}
\end{enumerate}
\end{crl}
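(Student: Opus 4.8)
The plan is to first prove a purely $p$-adic comparison lemma relating $\exp_p\circ\log_p$ to the identity up to roots of unity, and then to feed it the transformation formulas already available: Lemma~\ref{123}-(iv) and its $p$-adic twin Lemma~\ref{123p} for~(i), and Theorem~\ref{main1} together with Theorem~\ref{SYf} for~(ii). \emph{Step 1 (the key lemma).} I would show: for every $z\in\overline{\mathbb Q}^\times$ with $\mathrm{ord}_p(z)=0$ and every group homomorphism $\exp_p$ as in the statement, $\exp_p(\log_p z)\equiv z\bmod\mu_\infty$ in $\mathbb C_p^\times$. Write $z=\theta_p(z)\langle z\rangle$, so $\log_p z=\log_p\langle z\rangle$. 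Since $\langle z\rangle\in 1+M$ is a principal unit, $\langle z\rangle^{p^k}\to 1$ as $k\to\infty$; hence for $k$ large $\langle z\rangle^{p^k}$ lies in the disc on which the usual $\exp$ and $\log$ power series are mutually inverse, so $\exp(p^k\log_p\langle z\rangle)=\langle z\rangle^{p^k}$, while simultaneously $p^k\log_p z\to 0$, so $\exp_p$ agrees with the usual exponential series there. As $\exp_p$ is a homomorphism,
\begin{align*}
\exp_p(\log_p z)^{p^k}=\exp_p(p^k\log_p z)=\langle z\rangle^{p^k},
\end{align*}
whence $\exp_p(\log_p z)/\langle z\rangle\in\mu_{p^\infty}$ and $\exp_p(\log_p z)\equiv\langle z\rangle\equiv z\bmod\mu_\infty$. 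Applied to $z=\iota(u)^{1/m}$ (any fixed $m$-th root, again a $p$-adic unit) with $\log_p(\iota(u)^{1/m})=\tfrac1m\log_p\iota(u)$, this gives
\begin{align*}
\exp_p\!\left(\tfrac1m\log_p\iota(u)\right)\equiv\iota(u)^{1/m}\bmod\mu_\infty\qquad(u\in E_{F,+},\ m\in\mathbb N).
\end{align*}

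\emph{Step 2 (part (i)).} By Lemma~\ref{123}-(iv) there are $N\in\mathbb N$ and $u:=u_{c,D,D',\mathfrak a_c,\mathfrak a_c'}\in E_{F,+}$ with $X(c,\iota,D',\mathfrak a_c')-X(c,\iota,D,\mathfrak a_c)=\tfrac1N\log\iota(u)$, and Lemma~\ref{123p} supplies the identical formula for $X_p$ with $\log$ replaced by $\log_p$ and with the \emph{same} $N$ and $u$, because the unit produced at each of the elementary moves~\I,~\II,~\III\ is dictated by the cone geometry and the principal ideals involved, not by the choice of topology. Exponentiating, the two sides of the displayed congruence in~(i) become $\iota(u)^{1/N}$ and $\exp_p(\tfrac1N\log_p\iota(u))$ (up to inverting $u$), which Step~1 identifies modulo $\mu_\infty$.

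\emph{Step 3 (part (ii)).} Apply Theorem~\ref{main1} with $\iota_0:=\iota$ to obtain $u\in E_{F,+}$, $m\in\mathbb N$ with $X(c,\iota')+X(cc_\iota,\iota')=\tfrac1m\log\iota'(u)$ for all $\iota'\neq\iota$ and $X_p(c,\iota)+X_p(cc_\iota,\iota)=\tfrac1m\log_p\iota(u)$. Summing the first identity over $\iota'\neq\iota$, using Theorem~\ref{SYf} for $c$ and for $cc_\iota$, and using $\prod_{\iota'}\iota'(u)=N_{F/\mathbb Q}(u)=1$ (so $\prod_{\iota'\neq\iota}\iota'(u)=\iota(u)^{-1}$), I get
\begin{align*}
X(c,\iota)+X(cc_\iota,\iota)=\zeta'(0,c)+\zeta'(0,cc_\iota)+\tfrac1m\log\iota(u).
\end{align*}
Exponentiating this and dividing by $\exp_p(X_p(c,\iota)+X_p(cc_\iota,\iota))=\exp_p(\tfrac1m\log_p\iota(u))$ leaves the factor $\iota(u)^{1/m}/\exp_p(\tfrac1m\log_p\iota(u))$, which lies in $\mu_\infty$ by Step~1; this is precisely the asserted congruence.

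\emph{Main obstacle.} The delicate point is Step~1: $\exp_p$ is only required to be \emph{some} group homomorphism agreeing with $\sum z^n/n!$ on a neighbourhood of $0$, hence a priori badly discontinuous, so it cannot be evaluated at $\tfrac1m\log_p\iota(u)$ directly; the device of passing to a high $p$-power — in order to land simultaneously inside the convergence disc of the usual $\exp$ and inside the neighbourhood on which $\exp_p$ is pinned down — is what makes the argument work, and is also the reason the conclusion only holds modulo $\mu_\infty$. The remaining ingredient that must be handled with care is the compatibility already built into Theorem~\ref{main1} and Lemma~\ref{123}/\ref{123p}: the \emph{same} pair $(u,m)$ occurs on the Archimedean and on the $p$-adic side, and it is this, rather than any new computation, that couples the numerator and denominator of the ratio.
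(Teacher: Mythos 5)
Your proposal is correct and follows essentially the same route as the paper: the $p$-power trick showing $\exp_p(\log_p z)\equiv z \bmod \mu_\infty$ for $\mathrm{ord}_p(z)=0$ is exactly the paper's key computation, after which (i) is deduced from Lemmas \ref{123} and \ref{123p} (with the same unit $u$ and integer $N$ on both sides) and (ii) from Theorems \ref{SYf} and \ref{main1} together with $N_{F/\mathbb Q}(u)=1$. Your write-up merely spells out the details that the paper leaves implicit.
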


\begin{proof}
We see that $\exp_p\circ \log_p =\mathrm{id}$ on a neighborhood of $1$. Hence we have for a large enough $N$ 
\begin{align*}
(\exp_p( \log_p (z)))^{p^N}=\exp_p\circ \log_p (\langle z\rangle ^{p^N})=\langle z\rangle^{p^N} \quad (z \in \mathbb C_p^\times).
\end{align*}
In particular, when $\mathrm{ord}_p(z)=0$ we have
\begin{align*}
\exp_p( \log_p (z)) \equiv z \bmod \mu_\infty.
\end{align*}
Hence (i) follows from  Lemmas \ref{123}, \ref{123p}.
The assertion (ii) follows from Theorems \ref{SYf}, \ref{main1} since $\prod_{\iota \in \HFR} \iota(u)=N_{F/\mathbb Q}(u)=1$ for $u \in E_{F,+}$.
\end{proof}

\section{The case when a finite place splits completely}

In the previous section (especially in Corollary \ref{crlofmain1}-(ii)), $\exp(X(c,\iota))$ are the main terms
and $\exp_p(X_p(c,\iota))$ are the correction terms.
Their roles are exchanged in this section.

\subsection{A brief review of the results in \cite{KY1}}

Let $\mfpi$ be the prime ideal of $F$ corresponding to the $p$-adic topology of $F$ induced by $F \st{\iota}\hookrightarrow \mathbb C_p$
as in Definition \ref{piota}.

\begin{dfn}
\begin{enumerate}
\item We denote by $H_\mathfrak f$ the maximal ray class field modulo $\mathfrak f$ in the narrow sense,
by $\mathrm{Art}\colon C_\mathfrak f \st{\cong}\ra \mathrm{Gal}(H_\mathfrak f/F)$ the Artin map.
\item We denote the group of all characters of $C_{\mathfrak f}$ by $\hat C_{\mathfrak f}$.
Let $\chi \in \hat C_{\mathfrak f}$. For an integral ideal $\mathfrak g$ 
we denote the associated character $\in \hat C_{\mathfrak {fg}}$ by $\chi_{\mathfrak g}$.
Namely, $\chi_{\mathfrak g}$ is the composite map
\begin{align*}
C_{\mathfrak {fg}} \ra C_{\mathfrak f} \st{\chi}\ra \mathbb C^\times.
\end{align*}
\item Let $K$ be an extension field of $F$. For $\iota \in \HFR$, 
we take a lift $\tilde\iota \colon K \ra \mathbb C_p$ of $\iota\colon F \ra \mathbb C_p$ and put 
\begin{align*}
\mathfrak p_{K,\tilde\iota}:=\{z \in \mathcal O_K \mid |\tilde \iota(z)|_p<1 \}.
\end{align*}
Moreover we take a generator $\alpha_{K,\tilde\iota}$ of the principal ideal $\mathfrak p_{K,\tilde\iota}^{h_K}$, where $h_K$ is the class number of $K$.
\item We denote by $\overline{\mathbb{Q}}\log \overline{\mathbb{Q}}^{\times}$ 
(resp.\ $\overline{\mathbb{Q}}\log_{p} \overline{\mathbb{Q}}^{\times}$)
the $\overline{\mathbb{Q}}$-subspace of $\mathbb{C}$ (resp.\ $\mathbb{C}_{p}$)
generated by $\log a$ (resp.\ $\log_{p} a$) with $a\in \overline{\mathbb{Q}}^{\times}$.
For $\log a$, we take any branch of $\log$.
\item We define a $\overline{\mathbb{Q}}$-linear mapping $[\ ]_{p}$ by
\begin{align*}
[\ ]_{p}:\overline{\mathbb{Q}}\log\overline{\mathbb{Q}}^{\times}\rightarrow\overline{\mathbb{Q}}\log_{p}\overline{\mathbb{Q}}^{\times},
\quad \sum_{i=1}^k a_i\log b_i \mt \sum_{i=1}^k a_i\log_p b_i.
\end{align*}
This map is well-defined \cite[Lemma 5.1]{KY1} by a well-known theorem of A.~Baker.
\end{enumerate}
\end{dfn} 

\begin{rmk}
Let $H$ be an intermediate field of $H_\mathfrak f/F$, $\chi' $ a character of $\mathrm{Gal}(H/F)$.
Then there exists a character $\chi \in \hat C_\mathfrak f$ corresponding to $\chi'$ via the Artin map.
We note that $\chi$ may not be primitive.
The relation between the Artin $L$-function $L(s,\chi')$ and the Hecke $L$-function $L(s,\chi)$ can be written as
\begin{align*}
L(s,\chi)=L(s,\chi') \prod_{\mathfrak q \mid \mathfrak f, \ \mathfrak q \nmid \mathfrak f_{\chi'}} (1-\chi'(\mathrm{Frob}_\mathfrak q)N\mathfrak q^{-s}).
\end{align*}
Here $\mathfrak q$ runs over all prime ideal dividing $\mathfrak f$, not dividing the conductor $\mathfrak f_{\chi'}$ of $\chi'$.
We denote by $\mathrm{Frob}_\mathfrak q \in \mathrm{Gal}(H/F)$ the Frobenius automorphism.
\end{rmk}

\begin{prp}[{\cite[Lemma 5.5, Proposition 5.6]{KY1}}]
Let $\chi \in \hat C_{\mathfrak f}$. We assume that 
\begin{align*}
\mfpi\nmid \mathfrak f,\ \chi(\mfpi)=1.
\end{align*}
Then we have for $k \in \mathbb N$
\begin{align*}
\sum_{c \in C_{\fpi^k}} \chi_{\mfpi^k}(c) X(c,\iota) \in \overline{\mathbb{Q}}\log \overline{\mathbb{Q}}^{\times}.
\end{align*}
Moreover the quantity
\begin{align*}
\sum_{c \in C_{\fpi^k}} \chi_{\mfpi^k}(c) X_p(c,\iota)
-[\sum_{c \in C_{\fpi^k}} \chi_{\mfpi^k}(c) X(c,\iota)]_p \in \mathbb C_p
\end{align*}
does not depend on the choices of $D$ and $\mathfrak a_c$'s.
\end{prp}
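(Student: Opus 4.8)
The plan is to reduce both claims to the transformation formulas of Lemma~\ref{123} and Lemma~\ref{123p}, exploiting the compatibility of the map $[\ ]_p$ with the structure of those formulas. First I would fix $k\in\mathbb N$ and write $\mathfrak g:=\mathfrak p_\iota^k$. Under the hypotheses $\mathfrak p_\iota\nmid\mathfrak f$ and $\chi(\mathfrak p_\iota)=1$, the twisted sum $\sum_{c\in C_{\mathfrak f\mathfrak g}}\chi_{\mathfrak g}(c)X(c,\iota,D,\mathfrak a_c)$ is, up to the $G$-term, a $\overline{\mathbb Q}$-linear combination of $\log\iota(\alpha)$'s coming from the $W$- and $V$-terms; the $G$-term contributes $\sum_j\sum_{\bm x}\log\Gamma(\iota(\bm x\ttt\bm v_j),\iota(\bm v_j))$, and the key input (already essentially contained in \cite{KY1}) is that the $\chi_{\mathfrak g}$-twisted average of these multiple-gamma contributions, over all classes $c$, telescopes—using the distribution relation for $\Gamma(z,\bm v)$ coming from the decomposition $C(\bm v_j)=\coprod C(\bm w)$ into subcones adapted to $\mathfrak g$—into an expression lying in $\overline{\mathbb Q}\log\overline{\mathbb Q}^\times$. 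This gives the first assertion, that the classical twisted sum lies in $\overline{\mathbb Q}\log\overline{\mathbb Q}^\times$, so that applying $[\ ]_p$ to it makes sense.

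For the second assertion I would argue that the dependence on $(D,\mathfrak a_c)$ cancels. Concretely, by Lemma~\ref{123}-(iv), changing $(D,\mathfrak a_c)$ to $(D',\mathfrak a_c')$ alters $X(c,\iota)$ by $\tfrac1N\log\iota(u_{c,D,D',\mathfrak a_c,\mathfrak a_c'})$ for some $u_{\dots}\in E_{F,+}$ and $N\in\mathbb N$; summing against $\chi_{\mathfrak g}(c)$ shows the classical twisted sum changes by an element of $\overline{\mathbb Q}\log\iota(E_{F,+})\subset\overline{\mathbb Q}\log\overline{\mathbb Q}^\times$. By Lemma~\ref{123p}-(iv), the $p$-adic twisted sum changes by exactly $\tfrac1N\log_p\iota(u_{\dots})$ with the \emph{same} units $u_{\dots}$ and the \emph{same} coefficients (the transformation formulas for $X$ and $X_p$ are word-for-word identical with $\log,\log_\iota$ replaced by $\log_p,\log_{\iota,p}$). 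Therefore the change in $\sum_c\chi_{\mathfrak g}(c)X_p(c,\iota)$ equals $[\,\Delta\,]_p$ where $\Delta$ is the change in $\sum_c\chi_{\mathfrak g}(c)X(c,\iota)$, by the very definition of $[\ ]_p$ on expressions of the form $\sum a_i\log b_i$. Hence the difference
\[
\sum_{c\in C_{\mathfrak f\mathfrak g}}\chi_{\mathfrak g}(c)X_p(c,\iota)-\Bigl[\sum_{c\in C_{\mathfrak f\mathfrak g}}\chi_{\mathfrak g}(c)X(c,\iota)\Bigr]_p
\]
is invariant under the change $(D,\mathfrak a_c)\rightsquigarrow(D',\mathfrak a_c')$, which is the claim.

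The main obstacle I anticipate is \emph{bookkeeping the coefficients precisely enough to invoke the well-definedness of $[\ ]_p$}. The map $[\ ]_p$ is only $\overline{\mathbb Q}$-linear on the abstract space $\overline{\mathbb Q}\log\overline{\mathbb Q}^\times$, and Baker's theorem (via \cite[Lemma 5.1]{KY1}) guarantees it is well-defined; but to conclude that "the $p$-adic change equals $[\ ]_p$ of the archimedean change" I must check that the \emph{same} $\overline{\mathbb Q}$-linear combination of logs of the \emph{same} algebraic numbers appears on both sides. This requires tracking that every place where Lemma~\ref{123} produces a term $\iota(Z_j)\log\iota(\alpha)$ or $\frac{\mathrm{Tr}_{F/\mathbb Q}(Z_j)}{n}\log\iota(\epsilon)$, the $p$-adic analogue in Lemma~\ref{123p} produces the identical coefficient times $\log_p$ of the identical element—which is true because the $Z_j$, the trace terms, the units $u_\alpha$, and $\zeta(0,c)$ are all the same rational/algebraic data, independent of whether we are in the archimedean or $p$-adic setting. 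Once this matching is made explicit (a finite, if tedious, comparison of the two lemmas term by term), the argument closes. A secondary point to handle carefully is the $V$-term under operation \III, where a factor $\frac1n\log N_{F/\mathbb Q}(\alpha)$ appears; here one uses that $N_{F/\mathbb Q}(\alpha)\in\mathbb Q^\times$ so $\log N_{F/\mathbb Q}(\alpha)\in\overline{\mathbb Q}\log\overline{\mathbb Q}^\times$ and $[\log N_{F/\mathbb Q}(\alpha)]_p=\log_p N_{F/\mathbb Q}(\alpha)=\sum_{\iota'}\log_p\iota'(\alpha)$, matching the $p$-adic side.
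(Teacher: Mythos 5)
Your treatment of the second assertion is correct and is essentially the argument that underlies the quoted well-definedness: by Lemma \ref{123}-(iv), a change $(D,\mathfrak a_c)\rightsquigarrow(D',\mathfrak a_c')$ alters the archimedean twisted sum by some $\Delta=\sum_c\chi_{\mfpi^k}(c)\tfrac1N\log\iota(u_c)\in\overline{\mathbb Q}\log\overline{\mathbb Q}^\times$, and by Lemma \ref{123p} the $p$-adic twisted sum is altered by the same expression with $\log$ replaced by $\log_p$ (same units $u_c$, same coefficients), i.e.\ by $[\Delta]_p$; since $[\ ]_p$ is well defined and $\overline{\mathbb Q}$-linear, the difference is invariant. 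This part matches what the paper imports from \cite[Proposition 5.6]{KY1}, and your care about matching the coefficients term by term is exactly the right concern.

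The gap is in the first assertion, which is the real content and which you do not actually prove. The mechanism you sketch --- a ``distribution relation for $\Gamma(z,\bm v)$ coming from the decomposition $C(\bm v_j)=\coprod C(\bm w)$ into subcones adapted to $\mathfrak g$'' --- cannot work as stated: subdividing the cones of $D$ leaves $G(c,\iota,D,\mathfrak a_c)$ literally unchanged (Lemma \ref{123}-(i)-\I, Proposition \ref{welldef}), so no amount of cone refinement can convert the sum of $\log\Gamma(\iota(\bm x\ttt\bm v_j),\iota(\bm v_j))$ into logarithms of algebraic numbers. Tellingly, your sketch never uses the hypothesis $\chi(\mfpi)=1$, which is essential: without it the $\chi$-twisted sum of the $G$-terms is expected to be genuinely transcendental (a CM-period, in the spirit of Yoshida's conjecture). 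The algebraicity comes from \cite[Lemma 5.5, Proposition 5.6]{KY1}, where the character sum at level $\fpi$ is compared with data at level $\mathfrak f$ (splitting lattice points according to divisibility by $\mfpi$ and rescaling by the totally positive generator $\pi_{\mfpi}$ of $\mfpi^{h_{F,+}}$); the condition $\chi(\mfpi)=1$ is precisely what makes the multiple-gamma contributions cancel in the twisted sum, leaving an element of $\overline{\mathbb Q}\log\overline{\mathbb Q}^\times$. Moreover that input only covers $k=1$: the paper obtains general $k$ by induction using \cite[Lemmas 5.3, 5.4]{KY1} (conductor-change relations of the type recorded in Lemma \ref{ftofg}), a reduction your proposal asserts implicitly (``adapted to $\mathfrak g$'') but does not supply. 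So as written the proposal establishes the independence statement, conditionally on the algebraicity statement, but neither the algebraicity itself nor the passage from $k=1$ to general $k$.
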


\begin{proof}
The case $k=1$ follows from \cite[Lemma 5.5, Proposition 5.6]{KY1}.
We can use mathematical induction by \cite[Lemmas 5.3, 5.4]{KY1}.
\end{proof}

\begin{dfn}
Let $\chi \in \hat C_{\mathfrak f}$ satisfy $\mfpi\nmid \mathfrak f$, $\chi(\mfpi)=1$.
For $k \in \mathbb N$ we define 
\begin{align*}
Y_p(\chi_{\mfpi^k},\iota):=&\sum_{c \in C_{\fpi^k}} \chi_{\fpi^k}(c) X_p(c,\iota)
-[\sum_{c \in C_{\fpi^k}} \chi_{\fpi^k}(c) X(c,\iota)]_p.
\end{align*}
It follows that  
\begin{align*}
Y_p(\chi_{\mfpi^k},\iota)=\sum_{c \in C_{\fpi^k}} \chi_{\fpi^k}(c) G_p(c,\iota)
-[\sum_{c \in C_{\fpi^k}}\chi_{\fpi^k}(c) G(c,\iota)]_p
\end{align*}
since the $W,V$-terms are in $\overline{\mathbb{Q}}\log \overline{\mathbb{Q}}^{\times}$.
\end{dfn}

In \cite{KY1}, under the assumption $\chi(\mfpi)=1$, we formulated the following two conjectures on the exact value of $Y_p(\chi_{\mfpi},\iota)$, 
which is a refinement of Conjecture \ref{GSc}. 
The latter one has now become a corollary of Theorem \ref{main1}.

\begin{cnj}[{\cite[Conjecture A$'$]{KY1}}] \label{cnjky1}
Let $K$ be a CM-field which is abelian over $F$ with $\mathfrak f_{K/F}$ the conductor.
We assume that $\mfpi$ splits completely in $K/F$. (Hence $\mfpi \nmid \mathfrak f_{K/F}$.)
Then for any odd character $\chi'$ of $\mathrm{Gal}(K/F)$, we have
\begin{align*}
Y_p(\chi_{\mfpi},\iota)
=\frac{L(0,\chi')}{2h_K}\sum_{\sigma\in \mathrm{Gal}(K/F)}\chi'(\sigma)
\log_p\tilde\iota\left(\frac{\alpha_{K,\tilde\iota}^{\sigma\rho}}{\alpha_{K,\tilde\iota}^{\sigma}}\right).
\end{align*}
Here $\rho$ denotes the unique complex conjugation on $K$.
We take $\chi \in \hat C_{\mathfrak f_{K/F}}$ corresponding to $\chi'$.
\end{cnj}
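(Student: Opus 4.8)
The plan is to derive the formula from Gross's refined conjecture (Conjecture~\ref{GSc}) and the rank $1$ abelian Stark conjecture for the place $\mfpi$, by matching the two explicit decompositions --- complex and $p$-adic --- of the relevant partial zeta derivatives.

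First I would reduce to the multiple gamma parts. By construction $W_{p}=[W]_{p}$ and $V_{p}=[V]_{p}$ term by term, because $W$ and $V$ are $\overline{\mathbb Q}$-linear combinations of $\log\iota(\cdots)$ with algebraic arguments (Definitions~\ref{W}, \ref{V}) and $W_{p},V_{p}$ are the same combinations with $\log_{p}$ in place of $\log$; hence, by $\overline{\mathbb Q}$-linearity of $[\ ]_{p}$,
\begin{align*}
Y_{p}(\chi_{\mfpi},\iota)=\sum_{c\in C_{\fpi}}\chi_{\fpi}(c)\,G_{p}(c,\iota)-\Bigl[\sum_{c\in C_{\fpi}}\chi_{\fpi}(c)\,G(c,\iota)\Bigr]_{p},
\end{align*}
so only the $G$- and $G_{p}$-sides matter. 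I would then fix a convenient Shintani domain $D$ and auxiliary ideals $\mathfrak a_{c}$: by Lemmas~\ref{123}, \ref{123p} a change of $(D,\mathfrak a_{c})$ contributes only $\overline{\mathbb Q}$-multiples of $\log$ (resp.\ $\log_{p}$) of $F$-units, which cancel under $[\ ]_{p}$; this independence, and the absorption of all the unit ambiguities, is exactly the content of Theorem~\ref{main1}.

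Next I would carry out the matching. On the complex side, Theorem~\ref{SYf} ($\zeta'(0,c)=\sum_{\iota}X(c,\iota)$) together with the algebraicity $\sum_{c}\chi_{\fpi}(c)X(c,\iota)\in\overline{\mathbb Q}\log\overline{\mathbb Q}^{\times}$ (recalled above from \cite{KY1}) lets one express $\sum_{c}\chi_{\fpi}(c)G(c,\iota)$, modulo $\overline{\mathbb Q}\log\overline{\mathbb Q}^{\times}$, through $L(0,\chi')$ and the Archimedean sizes $\log\|u'^{\tau}\|_{w}$ of the $\mfpi$-unit $u'$ of (\ref{GSu}) produced by the rank $1$ Stark conjecture for $v=\mfpi$; on the $p$-adic side, Theorem~\ref{Kf} and the interpolation property (Proposition~\ref{prpofzp}-(iii)) express $\sum_{c}\chi_{\fpi}(c)G_{p}(c,\iota)$ through $\zeta'_{p,S}(0,\tau)$, which Gross's Conjecture~\ref{GSc} identifies with $\log_{p}N_{K_{\mathfrak p_{K}}/\mathbb Q_{p}}(u'^{\tau})$. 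Projecting onto the $\chi'$-isotypic component --- where, $\chi'$ being odd, only the $\rho$-antisymmetric part of $\log_{p}\tilde\iota(u')$ survives and the local norm drops out --- and writing $u'$ via the normalized generators $\alpha_{K,\tilde\iota}$ of $\mathfrak p_{K,\tilde\iota}^{h_{K}}$ then gives the asserted identity.

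The hard part is exactly this last matching: it uses two inputs not known in general, namely Stark's conjecture for $v=\mfpi$ (to manufacture $u'$ with the prescribed Archimedean behaviour) and Gross's Conjecture~\ref{GSc} (to control $\log_{p}u'$). So this route proves Conjecture~\ref{cnjky1} only conditionally in general, and unconditionally just where both inputs hold --- e.g.\ abelian situations over $\mathbb Q$, and, for the $p$-adic half, the cases of Dasgupta--Darmon--Pollack \cite{DDP}. What does come out unconditionally, from Theorems~\ref{SYf}, \ref{main1} and Lemma~\ref{123p}, is the well-definedness of $Y_{p}(\chi_{\mfpi},\iota)$ and the equivalence of Conjecture~\ref{cnjky1} with Gross's Conjecture~\ref{GSc} (Proposition~\ref{eqofcnjs}) --- the careful bookkeeping of the $W$-, $V$-terms and of the unit ambiguities that reduces the refined statement to Gross's.
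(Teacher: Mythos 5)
The statement you are asked about is not a theorem of this paper: it is Conjecture~\ref{cnjky1} (Conjecture~A$'$ of \cite{KY1}), which the paper leaves open. What the paper proves unconditionally is only (a) the non--totally-odd case, Theorem~\ref{crl2ofmain1}, as a consequence of the new Theorem~\ref{main1}, and (b) the reformulations in Proposition~\ref{eqofcnjs}, which are equivalences of Conjecture~\ref{cnjky1} with restatements of \emph{itself}, not with Gross's Conjecture~\ref{GSc}. Your closing claim that Proposition~\ref{eqofcnjs} gives an equivalence with Gross's conjecture is therefore incorrect; the paper states explicitly that Conjecture~\ref{cnjky1} is a strict \emph{refinement} of Conjecture~\ref{GSc}, expressing $\log_p {u'}^{\tau}$ ``without the norm $N_{K_{\mathfrak p_K}/\mathbb Q_p}$''.

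This points to the genuine gap in your conditional derivation, at the step ``the local norm drops out'' after projecting to the $\chi'$-isotypic component. Gross's Conjecture~\ref{GSc} controls only
$\log_p N_{K_{\mathfrak p_K}/\mathbb Q_p}({u'}^{\tau})=\mathrm{Tr}_{F_{\mfpi}/\mathbb Q_p}\log_p\tilde\iota({u'}^{\tau})$,
i.e.\ the sum over all embeddings $\iota'\in\HFCp$ inducing the same prime $\mfpi$ (there are $[F_{\mfpi}:\mathbb Q_p]$ of them), while $Y_p(\chi_{\mfpi},\iota)$ isolates a single embedding $\iota$. Averaging against a character $\chi'$ of $\mathrm{Gal}(K/F)$ cannot separate these embeddings, since $\mathrm{Gal}(K/F)$ does not act on them; likewise on the zeta side, Theorem~\ref{Kf} only yields $\zeta_p'(0,c)=\sum_{\iota\in\HFR}X_p(c,\iota)$, summed over \emph{all} $\iota$, so neither input lets you extract the single-$\iota$ quantity. (The same issue occurs on the Archimedean side via Theorem~\ref{SYf}; also note $u'$ in (\ref{GSu}) exists unconditionally for some $W$ --- Stark is only needed to take $W=W_K$.) The one-embedding refinement is exactly the content of Conjecture~\ref{cnjky1}, so it cannot be recovered from Stark plus Gross even conditionally, except in degenerate situations such as $F_{\mfpi}=\mathbb Q_p$. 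Your first reduction (discarding the $W$- and $V$-terms, since they lie in $\overline{\mathbb Q}\log\overline{\mathbb Q}^{\times}$, and the well-definedness of $Y_p$, which comes from \cite[Lemma 5.5, Proposition 5.6]{KY1} rather than from Theorem~\ref{main1}) is fine, but the proposal does not, and cannot along this route, prove the conjecture.
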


\begin{thm}[{\cite[Conjecture 5.10]{KY1}}] \label{crl2ofmain1}
Let $\chi \in \hat C_{\mathfrak f}$.
We assume that $\chi$ is not totally odd and satisfies $\chi(\mfpi)=1$. (Hence $\mfpi \nmid \mathfrak f$.) Then we have
\begin{align*}
Y_p(\chi_{\mfpi},\iota)=0.
\end{align*}
\end{thm}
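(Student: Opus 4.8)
The plan is to derive the vanishing of $Y_p(\chi_\mfpi,\iota)$ from Theorem \ref{main1}, applied not at $\mathfrak f$ but at the modulus $\fpi=\mathfrak f\mfpi$, which is divisible by $\mfpi=\mathfrak p_\iota$, so that the $p$-adic identity (\ref{main1padic}) is available at the distinguished place $\iota$. Since $\chi$ is not totally odd, I first fix $\iota_0\in\HFR$ with $\chi(c_{\iota_0})=1$ (recall from Remark \ref{sccase} that $\mathrm{Art}(c_{\iota_0})$ is the complex conjugation at $\iota_0$). As $\mfpi\nmid\mathfrak f$, one may choose the defining element $\nu_{\iota_0}$ congruent to $1$ modulo $\fpi$, so the resulting class $c_{\iota_0}\in C_\fpi$ projects to the old one in $C_\mathfrak f$ and hence $\chi_\fpi(c_{\iota_0})=\chi(c_{\iota_0})=1$. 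Throughout I fix a Shintani domain and ideals $\mathfrak a_c$; by the Proposition preceding the definition of $Y_p$ the final answer does not depend on these choices.

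Applying Theorem \ref{main1} at level $\fpi$ to each $c\in C_\fpi$ (with this $\iota_0$) produces $u_c\in E_{F,+}$ and $m_c\in\mathbb N$ with
\begin{align*}
X_p(c,\iota)+X_p(cc_{\iota_0},\iota)&=\tfrac1{m_c}\log_p\iota(u_c),\\
X(c,\iota')+X(cc_{\iota_0},\iota')&=\tfrac1{m_c}\log\iota'(u_c)\qquad(\iota'\in\HFR,\ \iota'\neq\iota_0),
\end{align*}
the second line being the real logarithm of (\ref{main1arch}), legitimate since $\iota'(u_c)^{1/m_c}>0$. For the case $\iota=\iota_0$ the archimedean identity at $\iota_0$ is missing, so I would recover $X(c,\iota_0)+X(cc_{\iota_0},\iota_0)$ by summing the second line over $\iota'\neq\iota_0$, using Theorem \ref{SYf} in the form $\sum_{\iota'}X(c,\iota')=\zeta'(0,c)$ (and likewise for $cc_{\iota_0}$) together with $\sum_{\iota'}\log\iota'(u_c)=\log N_{F/\mathbb Q}(u_c)=0$; this gives $X(c,\iota_0)+X(cc_{\iota_0},\iota_0)=\zeta'(0,c)+\zeta'(0,cc_{\iota_0})+\tfrac1{m_c}\log\iota_0(u_c)$.

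Next comes the pairing step. Because $\chi_\fpi(c_{\iota_0})=1$, the substitution $c\mapsto cc_{\iota_0}$ shows $\sum_{c\in C_\fpi}\chi_\fpi(c)\phi(c)=\tfrac12\sum_{c\in C_\fpi}\chi_\fpi(c)(\phi(c)+\phi(cc_{\iota_0}))$ for any $\phi$ on $C_\fpi$. Applying this to $\phi=X_p(\cdot,\iota)$ and to $\phi=X(\cdot,\iota)$ and inserting the identities above yields
\begin{align*}
\sum_{c\in C_\fpi}\chi_\fpi(c)X_p(c,\iota)&=\tfrac12\sum_{c}\chi_\fpi(c)\tfrac1{m_c}\log_p\iota(u_c),\\
\sum_{c\in C_\fpi}\chi_\fpi(c)X(c,\iota)&=\tfrac12\sum_{c}\chi_\fpi(c)\tfrac1{m_c}\log\iota(u_c)+\delta\,L'_{\fpi}(0,\chi),
\end{align*}
where $\delta=1$ if $\iota=\iota_0$ and $\delta=0$ otherwise, $L_\fpi(s,\chi)=\sum_{(\mathfrak a,\fpi)=1}\chi_\fpi([\mathfrak a])N\mathfrak a^{-s}$, and I used $\sum_c\chi_\fpi(c)\zeta'(0,cc_{\iota_0})=\sum_c\chi_\fpi(c)\zeta'(0,c)=L'_\fpi(0,\chi)$, again by $\chi(c_{\iota_0})=1$. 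Both right-hand sides lie in $\overline{\mathbb Q}\log\overline{\mathbb Q}^\times$, so the $\overline{\mathbb Q}$-linear map $[\ ]_p$ applies. Now $\chi(\mfpi)=1$ forces the Euler factor of $L_\fpi$ at $\mfpi$ to be $1-N\mfpi^{-s}$, i.e.\ $L_\fpi(s,\chi)=(1-N\mfpi^{-s})L_\mathfrak f(s,\chi)$ with $L_\mathfrak f(0,\chi)=\sum_{c\in C_\mathfrak f}\chi(c)\zeta(0,c)\in\overline{\mathbb Q}$, whence $L'_\fpi(0,\chi)=(\log N\mfpi)L_\mathfrak f(0,\chi)$. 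Since $N\mfpi$ is a power of $p$ we have $[\log N\mfpi]_p=\log_p N\mfpi=0$, so applying $[\ ]_p$ to the second display kills the $\delta$-term and reproduces the first display verbatim. Hence $Y_p(\chi_\mfpi,\iota)=\sum_c\chi_\fpi(c)X_p(c,\iota)-[\sum_c\chi_\fpi(c)X(c,\iota)]_p=0$, which is the claim.

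Almost all the difficulty is already packaged in Theorem \ref{main1}, which itself rests on the delicate transformation Lemmas \ref{123}, \ref{1232}, \ref{12} and their $p$-adic counterpart Lemma \ref{123p}; the argument above is then essentially bookkeeping around a character sum. The one genuinely new point, and the main obstacle to a fully uniform treatment, is the place $\iota=\iota_0$, where (\ref{main1arch}) is unavailable: one must bring in Theorem \ref{SYf} and then recognise the residual discrepancy $2L'_\fpi(0,\chi)$ as a ``trivial zero'' correction --- a $\overline{\mathbb Q}$-multiple of $\log N\mfpi$, forced precisely by the hypothesis $\chi(\mfpi)=1$ --- and hence an element of $\ker[\ ]_p$.
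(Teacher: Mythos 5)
Your proof is correct and takes essentially the same route as the paper: pair $c$ with $cc_{\iota_0}$ using $\chi(c_{\iota_0})=1$, apply Theorem \ref{main1} at the modulus $\fpi$ (together with Theorem \ref{SYf} and $N_{F/\mathbb Q}(u_c)=1$ when $\iota=\iota_0$, which is precisely the content of Corollary \ref{crlofmain1}-(ii)), and then dispose of the leftover $L'_{\fpi}(0,\chi)$-term. The only (harmless) difference is that final disposal: the paper notes the term is already zero as a complex number because $(1-N\mfpi^{-s})L(s,\chi)$ has a double zero at $s=0$ ($\chi$ not totally odd), whereas you write it as $(\log N\mfpi)\,L_{\mathfrak f}(0,\chi)$ and use that $[\ ]_p$ kills it since $\log_p N\mfpi=0$.
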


\begin{proof}
Since $\chi$ is not totally odd, there exists $\iota_0 \in \HFR$ satisfying $\chi(c_{\iota_0})=1$.
Then we can write
\begin{align*}
Y_p(\chi_{\mfpi},\iota)=\frac{1}{2} \sum_{c \in C_{\fpi}} \chi(c) \left(X_p(c,\iota)+X_p(cc_{\iota_0},\iota)-[X(c,\iota)+X(cc_{\iota_0},\iota)]_p\right).
\end{align*}
When $\iota \neq \iota_0$, we have $X_p(c,\iota)+X_p(cc_{\iota_0},\iota)-[X(c,\iota)+X(cc_{\iota_0},\iota)]_p=0$ by Theorem \ref{main1}.
Even when $\iota = \iota_0$, by Corollary \ref{crlofmain1} and $\chi(\mfpi)=1$, we have 
\begin{align*}
Y_p(\chi_{\mfpi},\iota)&=\frac{1}{2} \sum_{c \in C_{\fpi}} \chi(c) \log_p(\zeta'(0,c)\zeta'(0,cc_{\iota_0}))
=[\frac{d}{ds}L(s,\chi)(1-N\mfpi^{-s})]_p.
\end{align*}
Since $\chi$ is not totally odd, we see that $\mathrm{ord}_{s=0}L(s,\chi)\geq 1$, so $\frac{d}{ds}L(s,\chi)(1-N\mfpi^{-s})=0$.
Hence the assertion is clear.
\end{proof}

We reformulate Conjecture \ref{cnjky1} in the remaining of this subsection.

\begin{lmm}[{\cite[Lemma 6.4]{KY1}}] \label{ftofg}
Let $\chi \in \hat C_\mathfrak f$ satisfy $\chi(\mfpi)=1$. Then for an integral ideal $\mathfrak g$ of $F$ we have  
\begin{align*}
Y_p(\chi_{\mfpi\mathfrak g},\iota)
=\left(\prod_{\mathfrak q \mid \mathfrak g}(1-\chi_{\mfpi}(\mathfrak q))\right)Y_p(\chi_{\mfpi},\iota).
\end{align*}
Here $\mathfrak q$ runs over all prime ideals dividing $\mathfrak g$.
In particular for $k \in \mathbb N$ we have
\begin{align*}
Y_p(\chi_{\mfpi^k},\iota)=Y_p(\chi_{\mfpi},\iota).
\end{align*}
\end{lmm}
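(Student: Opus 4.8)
The plan is to isolate how $Y_p(\cdot,\iota)$ changes when the conductor is enlarged by one prime, and then to iterate. I would prove, by induction on the number of prime divisors of $\mathfrak g$ counted with multiplicity, two assertions at once: first, that $\sum_{c\in C_{\fpi\mathfrak g}}\chi_{\mfpi\mathfrak g}(c)X(c,\iota)\in\overline{\mathbb Q}\log\overline{\mathbb Q}^{\times}$, so that $[\ ]_{p}$ applies to it; and second, the recursion
\[
Y_p(\chi_{\mfpi\mathfrak g\mathfrak q},\iota)=\bigl(1-\chi_{\mfpi\mathfrak g}(\mathfrak q)\bigr)\,Y_p(\chi_{\mfpi\mathfrak g},\iota)
\]
for every prime ideal $\mathfrak q$, where as usual $\chi_{\mfpi\mathfrak g}(\mathfrak q):=0$ when $\mathfrak q\mid\fpi\mathfrak g$. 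The base case $\mathfrak g=(1)$ of the first assertion is the Proposition preceding the definition of $Y_p$ with $k=1$. Granting the recursion, one adjoins the primes of $\mathfrak g=\prod_i\mathfrak q_i^{e_i}$ to $\fpi$ one at a time: the first time a prime $\mathfrak q_i$ is adjoined, the current modulus is divisible by $\fpi$ but not by $\mathfrak q_i$, so the step contributes the factor $1-\chi_{\mfpi}(\mathfrak q_i)$ (the intermediate character agrees with $\chi_{\mfpi}$ on every ideal coprime to the current modulus), whereas every later adjunction of $\mathfrak q_i$ contributes the factor $1$ since then $\mathfrak q_i$ divides the current modulus; as the order of adjunction is immaterial this gives $Y_p(\chi_{\mfpi\mathfrak g},\iota)=\prod_{\mathfrak q\mid\mathfrak g}(1-\chi_{\mfpi}(\mathfrak q))\,Y_p(\chi_{\mfpi},\iota)$, the product over the distinct primes dividing $\mathfrak g$. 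The final assertion of the lemma is the case $\mathfrak g=\mfpi^{k-1}$: the sole prime dividing $\mathfrak g$ is $\mfpi$, which divides $\fpi$, so $\chi_{\mfpi}(\mfpi)=0$ and the product equals $1$.

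For the recursion, the key input is an ``Euler factor at $\mathfrak q$'' identity for the $\chi$-twisted sums:
\[
\sum_{c\in C_{\fpi\mathfrak g\mathfrak q}}\chi_{\mfpi\mathfrak g\mathfrak q}(c)\,X(c,\iota)
=\bigl(1-\chi_{\mfpi\mathfrak g}(\mathfrak q)\bigr)\sum_{c\in C_{\fpi\mathfrak g}}\chi_{\mfpi\mathfrak g}(c)\,X(c,\iota)+\mathcal E ,
\]
together with the same identity with $X$ replaced by $X_p$ and $\mathcal E$ replaced by $[\mathcal E]_{p}$, where $\mathcal E\in\overline{\mathbb Q}\log\overline{\mathbb Q}^{\times}$. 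This is the $\iota$-component, differentiated at $s=0$, of the classical relation $L(s,\chi_{\mfpi\mathfrak g\mathfrak q})=(1-\chi_{\mfpi\mathfrak g}(\mathfrak q)N\mathfrak q^{-s})L(s,\chi_{\mfpi\mathfrak g})$: at the level of the lattice sums $\sum_{z}\iota(z)^{-s}$ underlying $G(c,\iota)$ (and of their $p$-adic interpolations underlying $G_p(c,\iota)$), refining the ideal-class condition modulo $\mathfrak q$ and splitting off the contribution of the $z$ divisible by $\mathfrak q$ reproduces the full sum scaled by $\chi_{\mfpi\mathfrak g}(\mathfrak q)\,\iota(\pi_{\mathfrak q})^{-s/h_{F,+}}$ up to bookkeeping, so that after differentiating the correction $\mathcal E$ is a $\overline{\mathbb Q}$-combination of logarithms of algebraic numbers ($\log_{\iota}\mathfrak q=\tfrac{1}{h_{F,+}}\log\iota(\pi_{\mathfrak q})$ from the $G$- and $W$-terms, logarithms from the $V$-terms), hence lies in $\overline{\mathbb Q}\log\overline{\mathbb Q}^{\times}$; this is \cite[Lemmas 5.3, 5.4]{KY1}, and the $p$-adic identity follows from the classical one by the $p$-adic interpolation property and the continuity of Proposition \ref{prpofzp}, the correction being exactly $[\mathcal E]_{p}$ termwise because $\log_{\iota,p}\mathfrak q=[\log_{\iota}\mathfrak q]_{p}$ and, more generally, $[\ ]_{p}$ is defined by $\log a\mapsto\log_{p}a$. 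Since the $W$- and $V$-parts of $X$ and $X_p$ lie in $\overline{\mathbb Q}\log\overline{\mathbb Q}^{\times}$ and $\overline{\mathbb Q}\log_{p}\overline{\mathbb Q}^{\times}$ respectively, subtracting the two identities, using the induction hypothesis $\sum_{c\in C_{\fpi\mathfrak g}}\chi_{\mfpi\mathfrak g}(c)X(c,\iota)\in\overline{\mathbb Q}\log\overline{\mathbb Q}^{\times}$ and $1-\chi_{\mfpi\mathfrak g}(\mathfrak q)\in\overline{\mathbb Q}$, and invoking $\overline{\mathbb Q}$-linearity of $[\ ]_{p}$, the two copies of $[\mathcal E]_{p}$ cancel and the recursion drops out; the same two identities also show $\sum_{c\in C_{\fpi\mathfrak g\mathfrak q}}\chi_{\mfpi\mathfrak g\mathfrak q}(c)X(c,\iota)\in\overline{\mathbb Q}\log\overline{\mathbb Q}^{\times}$, which closes the induction.

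The hard part is this Euler-factor identity for the lattice sums --- controlling exactly how $\sum_{z}\iota(z)^{-s}$ and the accompanying class conditions reorganize under the passage from modulus $\fpi\mathfrak g$ to $\fpi\mathfrak g\mathfrak q$, namely how the contribution of the $z$ divisible by $\mathfrak q$ splits off, which representatives of the ideal classes to use in each fibre of $C_{\fpi\mathfrak g\mathfrak q}\to C_{\fpi\mathfrak g}$, and why the archimedean and $p$-adic corrections are linked by $[\ ]_{p}$. This is precisely the content of \cite[Lemmas 5.3, 5.4]{KY1}, here needed in the mildly more general shape of an arbitrary prime $\mathfrak q$ and an arbitrary base modulus divisible by $\fpi$ rather than $\mathfrak q=\mfpi$ and base modulus $\fpi^{k}$. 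Everything downstream --- the induction, the cancellation of $\mathcal E$, the product formula, and the specialization $\mathfrak g=\mfpi^{k-1}$ --- is then formal; and as the lemma itself is \cite[Lemma 6.4]{KY1}, one may also simply cite it, the sketch above showing how it reduces to \cite[Lemmas 5.3, 5.4]{KY1}.
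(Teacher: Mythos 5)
The paper offers no argument for this lemma beyond the citation to \cite[Lemma 6.4]{KY1}, and your proposal — which ultimately defers to that citation, and in the meantime sketches the induction on the primes of $\mathfrak g$ with the key Euler-factor step supplied by \cite[Lemmas 5.3, 5.4]{KY1} — is consistent with, and more detailed than, the paper's treatment; it also mirrors exactly how the paper invokes those same lemmas of \cite{KY1} to prove the preceding proposition by induction on the modulus. One small imprecision: if a prime $\mathfrak q_i\mid\mathfrak g$ also divides $\mathfrak f\mathfrak p_\iota$, then at its first adjunction the current modulus \emph{is} divisible by $\mathfrak q_i$, contrary to your phrasing, but the step factor is then $1=1-\chi_{\mfpi}(\mathfrak q_i)$ under your stated convention, so the final product formula and the specialization $\mathfrak g=\mfpi^{k-1}$ are unaffected.
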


\begin{prp} \label{eqofcnjs}
Assume that $\mfpi \nmid \mathfrak f$.
Let $H$ be the fixed subfield of $H_\mathfrak f$ under $\langle\mathrm{Art}([\mfpi])\rangle$.
Conjecture \ref{cnjky1} is equivalent to each statement below.
\begin{enumerate}
\item Let $\chi \in \hat C_\mathfrak f$.
We assume that $\chi$ is totally odd and satisfies $\chi(\mfpi)=1$.
We denote the character of $\mathrm{Gal}(H/F)$ corresponding to $\chi$ by $\chi'$.
Then we have
\begin{align*}
Y_p(\chi_{\mfpi},\iota)
=\frac{-L(0,\chi)}{h_{H}}\sum_{\sigma\in \mathrm{Gal}(H/F)}\chi'(\sigma)
\log_p \tilde\iota\left(\alpha_{H,\tilde\iota}^{\sigma}\right).
\end{align*}
\item Let $\tau\in \mathrm{Gal}(H/F)$. We put 
\begin{align*}
Y_p(\tau,\iota):=\sum_{c \in C_{\fpi},\ \mathrm{Art}(c)=\tau} X_p(c,\iota)-[\sum_{c \in C_{\fpi},\ \mathrm{Art}(c)=\tau} X(c,\iota)]_p.
\end{align*}
Here $c$ runs over all ideal classes whose images under the composite map
$C_{\fpi} \ra C_\mathfrak f \ra \mathrm{Gal}(H_\mathfrak f/F) \ra \mathrm{Gal}(H/F)$ equal $\tau$.
Then we have
\begin{align*}
Y_p(\tau,\iota)=\frac{-1}{h_{H}}\sum_{c \in C_\mathfrak f}\zeta(0,c^{-1}) \log_{p}\tilde\iota\left(\alpha_{H,\tilde\iota}^{\tau\mathrm{Art}(c)}\right).
\end{align*}
\end{enumerate}
\end{prp}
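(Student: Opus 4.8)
The plan is to prove the two equivalences Conjecture~\ref{cnjky1} $\Leftrightarrow$ (i) and (i) $\Leftrightarrow$ (ii). Throughout I identify $\mathrm{Gal}(H/F)\cong C_{\mathfrak f}/\langle[\mfpi]\rangle$ via $\mathrm{Art}$, so that the characters of $\mathrm{Gal}(H/F)$ are exactly the $\chi\in\hat C_{\mathfrak f}$ with $\chi(\mfpi)=1$, and $\chi'(\mathrm{Art}(c))=\chi(c)$. Since (i) and (ii) are both phrased with the single fixed field $H$ while Conjecture~\ref{cnjky1} involves a varying CM field $K$, the first equivalence carries the genuine content, and the second is, after the identification above, a discrete Fourier transform on $\mathrm{Gal}(H/F)$.

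For Conjecture~\ref{cnjky1} $\Leftrightarrow$ (i): first I would rewrite the right-hand side of Conjecture~\ref{cnjky1} using that $1-\rho$ acts as multiplication by $2$ on the odd part. Reindexing $\sigma\mapsto\sigma\rho$ gives $\sum_\sigma\chi'(\sigma)\log_p\tilde\iota(\alpha_{K,\tilde\iota}^{\sigma\rho})=-\sum_\sigma\chi'(\sigma)\log_p\tilde\iota(\alpha_{K,\tilde\iota}^{\sigma})$, so the right-hand side of Conjecture~\ref{cnjky1} equals $\frac{-L(0,\chi')}{h_K}\sum_\sigma\chi'(\sigma)\log_p\tilde\iota(\alpha_{K,\tilde\iota}^{\sigma})$. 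Next I would observe that the odd characters of $\mathrm{Gal}(K/F)$ for CM fields $K$ in which $\mfpi$ splits completely are exactly the totally odd $\chi\in\hat C_{\mathfrak f}$ with $\chi(\mfpi)=1$: for such a $\chi$ one has $\chi(c_\iota)=-1$ and $\chi(c_\iota c_{\iota'}^{-1})=1$ for all $\iota,\iota'$, so all archimedean complex conjugations share a common nontrivial restriction of order $2$ to the field $K_\chi$ cut out by $\chi$, whence $K_\chi$ is CM, with that common restriction as its complex conjugation, and $K_\chi\subseteq H$. It then remains to identify the two right-hand sides over $K_\chi$ and over $H$. Since $\mfpi$ splits completely in $H$, the prime $\mathfrak p_{H,\tilde\iota}$ has residue degree $1$ over $\mathfrak p_{K_\chi,\tilde\iota}$, so $N_{H/K_\chi}(\alpha_{H,\tilde\iota})$ generates $\mathfrak p_{K_\chi,\tilde\iota}^{h_H}$; hence $N_{H/K_\chi}(\alpha_{H,\tilde\iota})^{h_{K_\chi}}$ and $\alpha_{K_\chi,\tilde\iota}^{h_H}$ differ by a unit of $K_\chi$. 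Applying the functional $\sum_\sigma\chi'(\sigma)\log_p\tilde\iota((\cdot)^\sigma)$, together with additivity of $\log_p$, the identity $\sum_{\tilde\sigma\mapsto\sigma}\log_p\tilde\iota(\alpha_{H,\tilde\iota}^{\tilde\sigma})=\log_p\tilde\iota(N_{H/K_\chi}(\alpha_{H,\tilde\iota})^{\sigma})$, and the fact that this functional annihilates $\mathcal O_{K_\chi}^\times$ (the units of the CM field $K_\chi$ agree, up to $\mu_\infty$ and a power, with those of $F$, so $1-\rho$ kills them modulo $\mu_\infty$ while acting as $2$ on the odd part), I get $\frac{1}{h_{K_\chi}}\sum_{\mathrm{Gal}(K_\chi/F)}\chi'(\sigma)\log_p\tilde\iota(\alpha_{K_\chi,\tilde\iota}^{\sigma})=\frac{1}{h_H}\sum_{\mathrm{Gal}(H/F)}\chi'(\sigma)\log_p\tilde\iota(\alpha_{H,\tilde\iota}^{\sigma})$. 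Finally, the discrepancy between the modulus $\mathfrak f_{K/F}$ of Conjecture~\ref{cnjky1} and the fixed $\mathfrak f$ is absorbed by Lemma~\ref{ftofg}, whose Euler factor $\prod_{\mathfrak q}(1-\chi(\mathfrak q))$ is exactly the ratio of the imprimitive to the primitive $L$-value at $s=0$; this yields Conjecture~\ref{cnjky1}$\Leftrightarrow$(i).

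For (i) $\Leftrightarrow$ (ii): I would take the discrete Fourier transform of the identity in (ii) over $\tau\in\mathrm{Gal}(H/F)$ against a character $\chi'$. On the left, $\sum_\tau\chi'(\tau)Y_p(\tau,\iota)=Y_p(\chi_{\mfpi},\iota)$ because $\chi'(\mathrm{Art}(c))=\chi_{\mfpi}(c)$ for $c\in C_{\fpi}$. On the right, substituting $\sigma=\tau\,\mathrm{Art}(c)$ and using $\sum_{c\in C_{\mathfrak f}}\zeta(0,c^{-1})\overline{\chi(c)}=L(0,\chi)$ gives $\frac{-L(0,\chi)}{h_H}\sum_\sigma\chi'(\sigma)\log_p\tilde\iota(\alpha_{H,\tilde\iota}^{\sigma})$. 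So (ii) is equivalent to the identity of (i) holding for every character $\chi'$ of $\mathrm{Gal}(H/F)$, not only the totally odd ones; and for a $\chi$ with $\chi(\mfpi)=1$ that is not totally odd, both sides vanish---the left by Theorem~\ref{crl2ofmain1}, the right because $L(0,\chi)=0$ for a ray class character of a totally real field that is not totally odd. Hence (i) and (ii) carry the same content.

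The step I expect to be the main obstacle is the field-change comparison in the first equivalence: one must track the residue degree of $\mathfrak p_{H,\tilde\iota}$ over $\mathfrak p_{K_\chi,\tilde\iota}$ (which is precisely where ``$\mfpi$ splits completely'' is used), the behaviour of the class numbers $h_H$ and $h_{K_\chi}$, and the compatibility of the chosen lifts $\tilde\iota$ and generators $\alpha_{\bullet,\tilde\iota}$---which are defined only up to units, so the argument genuinely relies on the vanishing of $\sum_\sigma\chi'(\sigma)\log_p\tilde\iota(v^{\sigma})$ for $v\in\mathcal O_{K_\chi}^\times$ and $\chi'$ odd. Once that well-definedness input is granted, the remaining ingredients---the $1-\rho$ manipulation, the correspondence between odd Galois characters and totally odd ray class characters, the Euler-factor bookkeeping via Lemma~\ref{ftofg}, and the Fourier transform for (i)$\Leftrightarrow$(ii)---are formal.
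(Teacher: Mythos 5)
Your proposal is correct and follows essentially the same route as the paper: the equivalence with (ii) via orthogonality of characters together with Theorem \ref{crl2ofmain1} and the vanishing of $L(0,\chi)$ for characters that are not totally odd, and the equivalence with Conjecture \ref{cnjky1} via the correspondence between totally odd ray class characters and CM fields plus the Euler-factor bookkeeping of Lemma \ref{ftofg}; your norm-compatibility of the $\alpha$'s and the unit-annihilation argument merely make explicit what the paper leaves implicit (relying on \cite{KY1}). One small slip: the units of $K_\chi$ coincide, up to roots of unity, with those of its maximal totally real subfield $K_\chi^+$, not of $F$, but this does not affect your argument since $1-\rho$ still kills them modulo $\mu_\infty$.
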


\begin{proof}
First we note that $\chi \in \hat C_\mathfrak f$ is totally odd if and only if the intermediate field of $H_{\mathfrak{f}}/F$ 
corresponding to $\mathrm{Art}(\ker \chi)$ is a CM-field.
Hence the equivalence Conjecture \ref{cnjky1} $\LR$ (i) follows from Lemma \ref{ftofg}.
If $\chi$ is not totally odd, then we have $L(0,\chi)=0$, 
so the equation in (i) follows from Theorem \ref{crl2ofmain1}.
Therefore the equivalence (i) $\LR$ (ii) follows from the orthogonality of characters.
\end{proof}

\begin{rmk} \label{final}
Assume that $\mathfrak p_\iota \mid \mathfrak f$.
Let $H$ be an intermediate field of $H_\mathfrak f/F$.
First we assume that the real place $\iota$ splits completely in $H$.
Then, by Remark \ref{sccase} and Corollary \ref{crlofmain1}-(ii), we see that the Stark conjecture (\ref{Su}) with $v=\iota$ implies 
\begin{align*}
\left(\prod_{c \in \phi_H^{-1}(\tau)}\frac{\exp(X(c,\iota))}{\exp_p(X_p(c ,\iota))} \right)^2
\in \tilde\iota(\mathcal O_H^\times) \quad (\tau \in \mathrm{Gal}(H/F)),
\end{align*}
where $\phi_H$ denotes the composite map
\begin{align*}
C_\mathfrak f \ra \mathrm{Gal}(H_\mathfrak f/F) \ra \mathrm{Gal}(H/F).
\end{align*}
Strictly speaking, we put $\prod \frac{\exp_p(X_p(c ,\iota))}{\exp(X(c,\iota))}:=\frac{\prod \exp_p(X_p(c,\iota))}{\prod\exp(X(c,\iota))}$.
Next we assume that $\mfpi$ splits completely in $H$.
Then Proposition \ref{eqofcnjs}-(ii) states that Conjecture \ref{cnjky1} implies
\begin{align*}
\left(\prod_{c \in \phi_H^{-1}(\tau)}\frac{\exp(X(c,\iota))}{\exp_p(X_p(c ,\iota))} \right)^{h_HW}
\in \tilde\iota(\alpha) \ker \log_p \quad (\tau \in \mathrm{Gal}(H/F)).
\end{align*}
Here $W$ is the least common multiple of the denominators of $\zeta(0,c^{-1})'s$, $\alpha$ is a $\mfpi$-unit of $H$ defined as 
\begin{align*}
\alpha:=\prod_{c \in C_{\mathfrak f_0}}\alpha_{H,\tilde\iota}^{\tau\mathrm{Art}_0(c)W\zeta(0,c^{-1})},
\end{align*}
where we put $\mathfrak f_0$ to be the prime-to-$\mathfrak p_\iota$ part of $\mathfrak f$ and 
$\mathrm{Art}_0\colon C_{\mathfrak f_0} \ra \mathrm{Gal}(H/F)$ denotes the Artin map.
We note that $\ker \log_p$ is generated by rational powers of $p$ and the roots of unity.
\end{rmk}


\begin{thebibliography}{99}

\bibitem[Co]{Co} R.~Coleman, On the Frobenius matrices of Fermat curves, $p$-adic analysis,
\textit{Lecture Notes in Math.}
\textbf{1454} (1990), 173--193.

\bibitem[Da]{Da} S.~Dasgupta, Shintani zeta-functions and Gross-Stark units for totally real fields, 
\textit{Duke Math. J.} 
\textbf{143} (2008), no.\ 2, 225--279.

\bibitem[DDP]{DDP} S.~Dasgupta, H.~Darmon, R.~Pollack, Hilbert modular forms and the Gross-Stark conjecture, 
\textit{Ann.\ of Math.\ (2)}
\textbf{174} (2011), no.\ 1, 439--484 .

\bibitem[dS]{dS} E.~de Shalit, On monomial relations between $p$-adic periods, 
\textit{J.\ Reine Angew.\ Math.}
\textbf{374} (1987), 193--207.

\bibitem[Gr1]{Gr1} B.~H.~Gross, On the periods of abelian integrals and a formula of Chowla and Selberg (with an appendix by D.~E.~Rohrlich),
\textit{Inv.\ Math.}
\textbf{45} (1978), 193--211. 

\bibitem[Gr2]{Gr2} B.~H.~Gross, $p$-adic $L$-series at $s=0$, 
\textit{J.\ Fac.\ Sci.\ Univ.\ Tokyo}
\textbf{28} (1981), 979--994.

\bibitem[KY1]{KY1} T.~Kashio, and H.~Yoshida, On $p$-adic absolute CM-Periods, I, 
\textit{Amer.\ J.\ Math.} 
\textbf{130} (2008), no.\ 6, 1629--1685.

\bibitem[KY2]{KY2} T.~Kashio, and H.~Yoshida, On $p$-adic absolute CM-Periods, II, 
\textit{Publ.\ Res.\ Inst.\ Math.\ Sci.} 
\textbf{45} (2009), no.\ 1, 187--225.

\bibitem[Ka1]{Ka1} T.~Kashio, On a $p$-adic analogue of Shintani's formula, 
\textit{J.\ Math.\ Kyoto Univ.}
\textbf{45} (2005), 99--128.

\bibitem[Ka2]{Ka2} T.~Kashio, Fermat curves and the reciprocity law on cyclotomic units (arXiv:1502.04397),
to appear in \textit{J.\ Reine Angew.\ Math.},
the title was changed to ``Fermat curves and a refinement of the reciprocity law on cyclotomic units''.

\bibitem[Ka3]{Ka3} T.~Kashio, On the algebraicity of some products of special values of Barnes' multiple gamma function (arXiv:1510.01141),
to appear in \textit{Amer. J. Math.} 

\bibitem[Ka4]{Ka4} T.~Kashio, $p$-adic measures associated with zeta values and $p$-adic $\log$ multiple gamma function, (preprint, arXiv:1704.04606).

\bibitem[Shim]{Shim} G.~Shimura, 
\textit{Abelian varieties with complex multiplication and modular functions},
Princeton Math.\ Ser.\ 
\textbf{46}, Princeton University Press, 1998.

\bibitem[Shin1]{Shin1} T.~Shintani, On evaluation of zeta functions of totally real algebraic number fields at non-positive integers,
\textit{J.\ Fac.\ Sci.\ Univ.\ Tokyo Sect.\ IA Math.} 
\textbf{23} (1976), no. 2, 393--417.

\bibitem[Shin2]{Shin2} T.~Shintani, On values at $s=1$ of certain $L$ functions of totally real algebraic number fields, 
\textit{Algebraic Number Theory, Proc.\ International Symp., Kyoto, 1976}, 
Kinokuniya, Tokyo (1977), 201--212.

\bibitem[St]{St} H.~M.~Stark, $L$-functions at $s=1$. IV. First derivatives at $s=0$, 
\textit{Adv.\ in Math.} 
\textbf{35}  (1980), no.\ 3, 197--235.

\bibitem[Yo]{Yo} H.~Yoshida, 
\textit{Absolute CM-Periods},
Math.\ Surveys Monogr.\ 
\textbf{106}, Amer.\ Math.\ Soc., Providence, RI, 2003.

\end{thebibliography}
\end{document}